\documentclass[11pt]{amsart}
\usepackage{amsmath,amsthm,amsfonts,latexsym,amssymb}
\usepackage{graphicx}
\usepackage[arrow, matrix, curve]{xy}

\setlength{\textheight}{23cm} \setlength{\textwidth}{16 cm}
\calclayout\evensidemargin .1in

\newcommand{\R}{\mathbb{R}}

\newtheorem{theorem}{Theorem}[section]
\newtheorem{lemma}[theorem]{Lemma}
\newtheorem{proposition}[theorem]{Proposition}

\newtheorem{claim}[theorem]{Claim}

\theoremstyle{definition}

\newtheorem{example}[theorem]{Example}
\newtheorem{remark}[theorem]{Remark}

\newtheorem{definition}[theorem]{Definition}

\title[MAXIMAL PAGE CROSSING NUMBER OF CLOSED ORIENTABLE LEGENDRIAN SURFACES]{MAXIMAL PAGE CROSSING NUMBERS OF LEGENDRIAN SURFACES IN CLOSED CONTACT 5-MANIFOLDS}

\author{M. Firat Arikan}
\address{Dept. of Mathematics, Middle East Technical University, Ankara, TURKEY}
\email{farikan@metu.edu.tr}

\author{Ozlem Ersen}
\address{Dept. of Mathematics, Middle East Technical University, Ankara, TURKEY}
\email{ozlem.ersen@metu.edu.tr}

\subjclass[2010]{57R65, 58A05, 58D27}
\keywords{Legendrian, contact structure, symplectic, Stein, Weinstein, Liouville, open book}
\date{\today}


\begin{document}

\begin{abstract}
We introduce a new Legendrian isotopy invariant for any closed orientable Legendrian surface $L$ embedded in a closed contact $5$-manifold $(M, \xi)$ which admits an ``admissable'' open book $(B, f)$ (supporting $\xi$) for $L$. We show that to any such $L$ and a fixed page $X$, one can assign an integer $M\mathcal{P}_{X}(L)$, called ``Relative Maximal Page Crossing Number of $L$ with respect to $X$", which is invariant under Legendrian isotopies of $L$. We also show that one can extend this to a page-free invariant, i.e.,  one can assign an integer $M\mathcal{P}_{(B,f)}(L)$, called ``Absolute Maximal Page Crossing Number of $L$ with respect to $(B, f)$", which is invariant under Legendrian isotopies of $L$. In particular, this new invariant distinguishes Legendrian surfaces in the standard five-sphere which can not be distinguished by Thurston-Bennequin invariant.
\end{abstract}

\maketitle


\section{Introduction}

Let $(M^5, \xi=\textrm{Ker}(\alpha))$ be a closed contact $5$-manifold where $\alpha$ is a (global) contact form with the Reeb vector field $R$ that is compatible with an open book $(B, f)$ on $M$.  Consider the associated abstract open book $OB(X, h)$ where $X$ is the page, $B=\partial X$ is the binding, and $h$ is the monodromy. (See the next section for definitions.) Thus, $(B^3, \xi|_{B}=\textrm{Ker}(\alpha|_{B}))$ is the convex boundary of each symplectic page $(X^4,d\alpha|_{B} )$, and so it is a $3$-dimensional tight contact (sub)manifold. ($\alpha|_{B}$ is a contact form on $B$.) Let $L$ be a closed orientable Legendrian surface of $(M, \xi)$, and so there is a Legendrian embedding $\phi :\Sigma \hookrightarrow \big (M, \xi\big)$ such that $\phi(\Sigma)=L$ where $\Sigma$ is a $2$-dimensional surface which determines the topological type of $L$.  
For the invariants that we will define, one needs that $L$ and $B$ intersect transversely, and $X$ is simply-connected and Weinstein. To this end, we define the following class of supporting open books:\\

\begin{definition}
	Let $L$ be a compact, oriented, Legendrian submanifold of a closed contact $5$-manifold $(M, \xi)$. An open book $(B, f)$ on $M$ supporting $\xi$ is called an \textbf{\textit{admissable open book for} $L$} if it has simply-connected Weinstein pages and $L$ intersects $B$ transversely.
\end{definition}

One can always find an open book with Weinstein pages whose binding $B$ is transverse to a given $L\subset (M, \xi)$. This can be seen from a combination of results: Theorem \ref{thm:existence_OB} and Lemma \ref{lem:spider}. Suppose $(B, f)$ is such an admissable open book, and $X$ is any  fixed page of $(B, f)$. Then\\

\begin{theorem} \label{thm:relative_page_crossing}
One can associate an integer $M\mathcal{P}_{X}(L)$, called ``Relative Maximal Page Crossing Number of $L$ with respect to $X$", which is invariant under Legendrian isotopies of $L$.\\
\end{theorem}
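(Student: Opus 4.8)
The plan is to read the invariant off a normalized position of $L$ with respect to $(B,f)$ and then to show that the resulting integer is insensitive both to the normalization and to Legendrian isotopy. By Theorem~\ref{thm:existence_OB} we may work with a Weinstein open book whose pages are simply-connected, and by Lemma~\ref{lem:spider} (together with a preliminary Legendrian isotopy) I would first put $L$ into a \emph{spider position} adapted to $X$: the intersection $L\cap B$ is a finite set $P$ of transverse points; near each $p\in P$ the pair $(M,L)$ is the standard model $\bigl(B\times D^2,\{b_p\}\times D^2\bigr)$ in which $f$ is the angular coordinate; and the circle-valued map $f|_{L\smallsetminus\nu(P)}\colon L\smallsetminus\nu(P)\to S^1$ is generic (Morse, with distinct critical values and a ``spider'' core meeting $X$), its restriction to each boundary circle of $L\smallsetminus\nu(P)$ being a degree $\pm1$ covering. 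For such $L$ the critical values of $f|_L$ are finite in number, their complementary arcs in $S^1$ are finite, and over each arc the page slices $X_\theta\cap L$ are mutually diffeomorphic to a fixed compact $1$-manifold with boundary on $\partial\nu(P)$.

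I would then define $\mathcal{P}_X(L')$, for $L'$ in such a normal form, to be the number of connected components of the slice $X\cap L'$, and set $M\mathcal{P}_X(L):=\max\mathcal{P}_X(L')$, the maximum taken over all spider positions $L'$ adapted to $X$ that represent the embedded surface $L$. The first point to establish is that this maximum is \emph{finite}, so that $M\mathcal{P}_X(L)$ is genuinely an integer: any attempt to split the slice $X\cap L$ into more and more components forces new critical points of $f|_L$, and the spider normal form together with the facts that $X$ is a Weinstein $4$-manifold (hence built from finitely many handles) and that $L$ is a fixed closed surface caps the number of components a single slice can carry, while simple-connectivity of $X$ rules out homologically hidden closed slice components. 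The remaining, and main, point is invariance under a Legendrian isotopy $L_t$. After a generic perturbation (Cerf theory) the isotopy factors into finitely many elementary moves: (i) isotopies through spider positions that permute the critical values on $S^1$ without collision; (ii) births, deaths, and handle slides of critical points of $f|_L$ supported away from the binding; and (iii) finger moves that push a piece of $L$ across the binding $B$, changing $|L\cap B|$ by $\pm2$. Moves of type (i) leave the multiset of slice component-counts literally unchanged; for (ii) the standard local models show that a cancelling pair only inserts or deletes a contractible arc or circle in the slices over one arc of $S^1$, which cannot strictly increase $M\mathcal{P}_X$, while running the move backwards cannot strictly decrease it.

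The hard part will be move (iii). When a finger of $L$ is pushed through the binding, the punctured surface $L\smallsetminus\nu(P)$, the degrees along its boundary circles, and the entire circle-valued Morse picture all change, and one must verify that the maximal slice component-count is unaffected. Here I would work exclusively inside the standard neighbourhood $B\times D^2$, using the Weinstein structure of the pages to keep all the intermediate page-level isotopies supported in a ball and to pin down a canonical isotropic model for the piece of $L$ that crosses $B$, and using simple-connectivity of the page to ensure that the crossing neither creates nor destroys an essential closed component invisible to the rest of $S^1$. Once (i)--(iii) are in place, $M\mathcal{P}_X(L)$ depends only on the Legendrian isotopy class of $L$, which is Theorem~\ref{thm:relative_page_crossing}; the page-free refinement promised in the abstract then follows by taking the appropriate extremum over the pages $X$.
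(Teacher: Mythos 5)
There is a genuine gap, and it starts at the definition. The paper's $\mathcal{P}_{X}(L')$ is not a count of components of the slice $X\cap L'$: it is the sum $\sum_i \widetilde{tb}(K^i)$ of Thurston--Bennequin numbers of the components $K^i=k^0_i\cup k^1_i$ of the link $L'\pitchfork D(X)$, where $D(X)=X_0\cup_\partial X_1$ is the \emph{double} of the page and each $tb(k^j_i)$ is read off from the Legendrian arcs drawn in the Stein diagrams of $X_0$ and $X_1$. By replacing this contact-geometric quantity with a purely topological component count you are proving a statement about a different (and, as it turns out, ill-defined) number, so the argument does not establish Theorem~\ref{thm:relative_page_crossing} as stated.

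The substitution is not harmless, because your finiteness claim fails for the component count. A Legendrian isotopy can push small ``fingers'' of $L$ through $D(X)$ and back, each finger creating a new trivial circle component of $L'\cap D(X)$; there is no a priori bound on the number of critical points of $f|_{L'}$ as $L'$ ranges over the Legendrian isotopy class, so $\max_{L'}\#\pi_0(X\cap L')=+\infty$ and your $M\mathcal{P}_X(L)$ is not an integer. The paper's definition is engineered precisely to defeat this: a homotopically trivial component bounds a disk in $D(X)$, hence sits as a Legendrian unknot in the tight (Stein fillable) boundary of a page, and the Bennequin inequality forces $\widetilde{tb}\leq -1$ (Lemma~\ref{lem:tb_negative}); so every such birth strictly \emph{decreases} the sum and cannot push the supremum up. Likewise, the degenerations you defer to ``the hard part'' are controlled in the paper quantitatively --- a unification/separation is a Legendrian connected sum along a band and changes $\widetilde{tb}$ by exactly $\mp 1$ --- and the existence of a maximizer is then extracted via a partial order on intersection links and Zorn's lemma, none of which has an analogue in your sketch. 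Your Cerf-theoretic decomposition of the isotopy into regular moves, births/deaths, and binding crossings does parallel the paper's regular/irregular dichotomy and could be retained, but only after restoring the $\widetilde{tb}$-based definition and the negativity estimate that makes the maximum exist.
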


The definition of $M\mathcal{P}_{X}(L)$ are given in Section \ref{sec:Relative_Page_Crossing}, and based on the link of (transverse) intersection of $L$ with the double $D(X)$ of the page $X$ that we fix. The proof of Theorem \ref{thm:relative_page_crossing} will be presented in Section \ref{sec:well-definedness_isotopies}. Using the relative version and putting a further \textbf{\textit{essentially intersecting}} condition on $(B, f)$ (see Definition \ref{def:Page_crossing} and Section \ref{sec:well-definedness_absolute} for the definition), one can also define a number which is independent of pages of the open book at hand. Namely, under the above assumptions, we prove:\\

\begin{theorem} \label{thm:absolute_page_crossing}
One can associate an integer $M\mathcal{P}_{(B,f)}(L)$, called ``Abolute Maximal Page Crossing Number of $L$ with respect to $(B,f)$", which is invariant under Legendrian isotopies of $L$.\\
\end{theorem}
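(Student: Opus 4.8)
The plan is to derive Theorem~\ref{thm:absolute_page_crossing} from the relative invariance already established in Theorem~\ref{thm:relative_page_crossing}. Any two pages of $(B,f)$ sit inside the $S^1$-family $\{X_\theta\}_{\theta\in S^1}$ of pages sharing the binding $B=\partial X_\theta$, so it suffices to prove that, once $(B,f)$ is essentially intersecting for $L$, the integer $M\mathcal{P}_{X_\theta}(L)$ is independent of $\theta$. Granting this, one simply sets $M\mathcal{P}_{(B,f)}(L):=M\mathcal{P}_X(L)$ for an arbitrary page $X$. Invariance under Legendrian isotopies of $L$ is then inherited from Theorem~\ref{thm:relative_page_crossing}: a generic Legendrian isotopy can be taken to preserve transversality of $L$ with $B$ (as in the discussion preceding Theorem~\ref{thm:relative_page_crossing}, combining Theorem~\ref{thm:existence_OB} and Lemma~\ref{lem:spider}) and the essentially intersecting condition, so $M\mathcal{P}_{(B,f)}(\cdot)$ is defined all along the isotopy and agrees with $M\mathcal{P}_X(\cdot)$, which is constant by Theorem~\ref{thm:relative_page_crossing}.

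To prove page-independence I would show that $\theta\mapsto M\mathcal{P}_{X_\theta}(L)$ is locally constant on the connected circle $S^1$, hence constant. Recall that $M\mathcal{P}_{X_\theta}(L)$ is extracted from the intersection $L\cap D(X_\theta)$ with the double $D(X_\theta)$, which (up to the canonical orientation reversal on one copy) is realized inside $M$ as the closed $4$-manifold $X_\theta\cup_B X_{\theta+\pi}$. As $\theta$ varies these embedded doubles form a smooth family of mutually diffeomorphic $4$-manifolds, all sharing the fixed finite ``seam'' $L\cap B$, and the open-book/page structure they carry varies continuously with $\theta$. For $\theta$ in a neighbourhood of any fixed $\theta_0$, an Ehresmann/Thom-isotopy argument then identifies the links $L\cap D(X_\theta)$ with a single link type moving by ambient isotopy through the family; since $M\mathcal{P}_{X_\theta}(L)$ depends only on this combinatorial data, it does not change near $\theta_0$.

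The genuine obstacle is the step where the essentially intersecting hypothesis must be used. A priori, as $\theta$ moves past certain non-generic values an arc of $L\cap X_\theta$ could acquire a tangency with $B$, or a crossing contributing to $M\mathcal{P}_{X_\theta}(L)$ could slide across the binding and be created or destroyed, so that $\theta\mapsto M\mathcal{P}_{X_\theta}(L)$ jumps exactly there. One must show that Definition~\ref{def:Page_crossing} rules this out: the portion of $L$ near $B$, controlled by the transverse local model at the points of $L\cap B$, contributes to the page crossing count in a way that is stable under rotation of the page, and the extraction of $M\mathcal{P}_{X_\theta}(L)$ remains valid at every $\theta$, not merely generic ones. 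This is the analogue, for the page parameter, of the move-by-move analysis underlying the proof of Theorem~\ref{thm:relative_page_crossing}, and I expect it to be the technical heart of the argument. Once this stability is established, local constancy on $S^1$ gives global constancy, the common value is by definition $M\mathcal{P}_{(B,f)}(L)$, and Theorem~\ref{thm:absolute_page_crossing} follows.
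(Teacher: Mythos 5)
Your proposal follows essentially the same route as the paper: define $M\mathcal{P}_{(B,f)}(L):=M\mathcal{P}_{X}(L)$ for an arbitrary page, prove that $\theta\mapsto M\mathcal{P}_{X_\theta}(L)$ is locally constant and hence constant on the connected compact circle $S^1$, and inherit Legendrian-isotopy invariance from Theorem~\ref{thm:relative_page_crossing}. The ``technical heart'' you flag is exactly the paper's Lemma~\ref{lem:perturbation_of_page}: it is handled by working with a representative $L'\in[L]$ realizing $M\mathcal{P}_{X_{\theta_0}}(L)$, for which transversality to $D(X_\theta)$ is an open condition in $\theta$ and the intersection links deform by Legendrian isotopy, so local constancy holds at \emph{every} $\theta_0$ and one never needs to analyze the degenerate page positions you worry about.
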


We define $M\mathcal{P}_{(B,f)}(L)$ and prove Theorem \ref{thm:absolute_page_crossing} in Section \ref{sec:well-definedness_absolute}.\\

Any orientable Legendrian submanifold in any contact manifold comes with a canonical contact framing, called \textit{Thurston-Bennequin framing}. More precisely, if $L^n \subset (M^{2n+1},\xi)$ is an orientable Legendrian submanifold, then its contact framing is determined by a smooth vector field which is every transverse to $\xi|_L$. If we further assume that $L$ is null-homologous (i.e., if $L=\partial C$ for some $(n+1)$-chain $C \subset M$), then we can compare the contact framing on $L$ with the one determined by $C$, and so one can identify it with an integer $tb(L)$ called \textit{Thurston-Bennequin number} of $L$. (See the next section for more details.) In the past two decades new Legendrian isotopy invariants have been defined and studied (see for instance, \cite{EES1}, \cite{EES2}, \cite{EES5}) due to insufficiency of $tb(L)$ in distinguishing non-isotopic Legendrian submanifolds in certain cases. Most of these new invariants are based on differential graded algebras and very difficult to compute.\\

Returning back to dimension five, it has been known (see \cite{EES5}, for instance) that $tb(L)$  can not distinguish Legendrian surfaces in the standard contact $\mathbb{R}^{5}$ or $\mathbb{S}^5$ which are smoothly (but not Legendrian) isotopic. The reason for this is that $tb(L)$ coincides with a topological invariant for these cases, i.e., it does not carry any information about the Legendrian embedding of $L$ into  $\mathbb{R}^{5}$ or $\mathbb{S}^5$. On the other hand, the invariants $M\mathcal{P}_X(L)$, $M\mathcal{P}_{(B,f)}(L)$ can distinguish such Legendrian surfaces by means of classical computations relatively more visual and simpler than those used in computing other Legendrian isotopy invariants.  A concrete example is given in Section \ref{sec:Example}.


\section{Preliminaries} \label{sec:Preliminaries}



\subsection{ Contact and Symplectic Manifolds}\label{subsec_contact_manifolds}

Let us start with defining contact structures on (necessarily) odd and symplectic structures on (necessarily) even dimensional manifolds. More discussions and details about them can be found, for instance, in \cite{E3}, \cite{Ge} and \cite{MS8}.

\begin{definition}
	A pair $(M^{2n+1},\xi)$ is called a \textbf{contact manifold} where $M$ is a  smooth manifold and $\xi\subset TM$ is a totally non-integrable $2n$-plane field distrubution on $M$, that is, locally, $\xi$ is the kernel of a $1$-form $\alpha$ with the property $\alpha\wedge(d\alpha)^n\neq0$. The \textbf{contact structure} $\xi$ is said to be \textbf{co-oriented} if it is the kernel of a globally defined $1$-form $\alpha$ with the above property. Such $\alpha$ is called a \textbf{contact form} on $M$. Corresponding to a contact form $\alpha \in \Omega^{1}(M)$, the \textbf{\textit{Reeb vector field}} is the vector field $R_{\alpha}$ uniquely defined by the equations
	$$d\alpha(R_{\alpha}, -)=0, \quad \alpha(R_{\alpha})=1.$$

Finally, a vector field $Z$ on a contact manifold is said to be a \textbf{\textit{contact vector field}} if it satisfies 
 \begin{center}
 	$\mathcal{L}_{Z} \alpha=f \alpha$
 \end{center}
for some function $f: M\rightarrow \mathbb{R}$.
That is, the flow of $Z$ preserves the contact distrubution $\xi$.	
\end{definition}

Through out the paper, all contact manifolds will be assumed to be co-oriented. Any contact manifold $(M,\xi)$ is necessarily orientable as $\alpha\wedge(d\alpha)^n\neq0$. Once a contact form $\alpha$ for $\xi$ is fixed, $M$ is assumed to be oriented by the volume form $\alpha\wedge(d\alpha)^n$.


\begin{example} Consider the standard coordinates $(x_{1}, y_{1}, x_{2}, y_{2}, z)$ in $\mathbb R^5$, and also the 1-form $\alpha_{0}=dz+x_{1}dy_{1}+x_{2}dy_{2}$. Since $\alpha_{0}\wedge(d\alpha_{0})^2=(dz+x_{1}dy_{1}+x_{2}dy_{2})\wedge(dx_{1}\wedge dy_{1}+dx_{2}\wedge dy_{2})^2=2dz\wedge dx_{1}\wedge dy_{1}\wedge dx_{2}\wedge dy_{2}\neq0$, $\xi_{0}=\textrm{Ker}(\alpha_{0})$ is a contact structure on $\mathbb R^5$. The $4$-plane distribution $\xi_{0}$ is called the \textbf{\textit{standard contact structure}} on $\mathbb R^5$. Note that the Reeb vector field of $\alpha_{0}$ is $\partial_{z}$.\end{example}

\begin{example} Let $\mathbb{S}^5$ be the unit $5$-sphere in $\mathbb R^6$ with usual coordinates $(x_{1}, y_{1}, x_{2}, y_{2}, x_{3}, y_{3})$. Consider the $1$-form $\alpha_{st} =x_{1}dy_{1}-y_{1}dx_{1}+x_{2}dy_{2}-y_{2}dx_{2}+x_{3}dy_{3}-y_{3}dx_{3}$, restricted to $\mathbb{S}^5$. This contact form defines the \textbf{\textit{standard contact structure}} $\xi_{st}=\textrm{Ker}(\alpha_{st})$  on $\mathbb{S}^5$.\end{example}

The following definitions describe the equivalence of contact structures and forms.

\begin{definition}
	A diffeomorphism $\psi:(M_{1}, \xi_{1}=\textrm{Ker}(\alpha_{1})) \rightarrow (M_{2}, \xi_{2}=\textrm{Ker}(\alpha_{2})) $ between two contact manifolds is called \textbf{\textit{contactomorphism}} if its derivative map $\psi_*: TM_{1}\rightarrow TM_{2}$ takes the contact structure $\xi_{1}$ to the contact structure $\xi_{2}$ on $M_{2}$, i.e. if there is a function $\lambda : M_{1}\rightarrow \mathbb{R}\setminus \{0\}$ with $\psi^{*} \alpha_{2}=\lambda\alpha_{1}$. Two contact manifolds $(M_{1}, \xi_{1})$ and $(M_{2}, \xi_{2})$ are said to be \textbf{\textit{contactomorphic}} if there exists a contactomorphism between them.
\end{definition}

\begin{definition}
Two contact structures $\xi_{1}$ and $\xi_{2}$ on a manifold $M$ are \textbf{\textit{isotopic}} if there is a contactomorphism $\psi:(M, \xi_{1})\rightarrow (M, \xi_{2})$ such that $\psi$ is isotopic to the identity. On the other hand, two contact structures $\xi_{1}$ and $\xi_{2}$ on $M$ are called \textbf{\textit{homotopic}} if they are homotopic as hyper-plane distributions.
\end{definition}

We note that two different contact structures can be homotopic but not isotopic. Hence, classification of contact structures is made upto isotopy. 
Following local results hold in any odd dimension, but we state them here only for dimension five, and use them later in the paper.

\begin{theorem} 
For any point $p \in \mathbb{S}^{5}$, $(\mathbb{S}^{5}\backslash \{p\}, \xi_{st})$ and $(\mathbb R^{5}, \xi_{0})$ are contactomorphic. 
\end{theorem}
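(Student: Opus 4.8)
The plan is to exhibit an explicit contactomorphism realizing stereographic projection. First I would fix a point $p \in \mathbb{S}^5$ and recall that the standard contact form $\alpha_{st}$ on $\mathbb{S}^5 \subset \mathbb{R}^6$ is the restriction of the primitive $\lambda = \sum_{i=1}^{3}(x_i\,dy_i - y_i\,dx_i)$ of the standard symplectic form on $\mathbb{R}^6 \cong \mathbb{C}^3$; equivalently, it is the contact form induced on the unit sphere as a strictly convex hypersurface from the Liouville vector field $\frac{1}{2}\sum_i (x_i\partial_{x_i} + y_i\partial_{y_i})$. By the rotational symmetry of $\mathbb{S}^5$ (the unitary group $U(3)$ acts transitively on $\mathbb{S}^5$ by contactomorphisms of $\xi_{st}$), it suffices to treat one convenient choice of $p$, say the ``north pole'' in the last coordinate. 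So the reduction is: produce a diffeomorphism $\mathbb{S}^5 \setminus \{p\} \to \mathbb{R}^5$ carrying $\xi_{st}$ to $\xi_0 = \ker\alpha_0$.

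Next I would write down the map. The cleanest route is via the unit ball model: identify $\mathbb{R}^5$ (with coordinates $(x_1,y_1,x_2,y_2,z)$ and $\alpha_0 = dz + x_1\,dy_1 + x_2\,dy_2$) contactomorphically with an open ball, and then identify that ball with $\mathbb{S}^5 \setminus \{p\}$. Concretely, one uses the standard ``Cayley transform'' type formula from the Heisenberg group picture: $\mathbb{S}^5 \setminus \{p\}$ with its standard CR/contact structure is the one-point-deleted boundary of the ball $\mathbb{B}^6 \subset \mathbb{C}^3$, and the unbounded realization of $\partial\mathbb{B}^6$ minus a point is the Heisenberg group $\mathbb{H}^2 \cong \mathbb{R}^5$ whose standard contact form is exactly $\alpha_0$ up to a positive conformal factor. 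I would carry this out by giving the biholomorphism $\Phi:\mathbb{B}^6 \to \{(w_1,w_2,w_3)\in\mathbb{C}^3 : \mathrm{Im}(w_3) > |w_1|^2 + |w_2|^2\}$ explicitly (a fractional linear map in the $w_3$ variable), checking it extends to boundaries away from $p$, and then verifying that the pullback of the boundary contact form is a positive multiple of $\alpha_0$ — this is the $\psi^*\alpha_2 = \lambda\alpha_1$ condition with $\lambda > 0$, which is all that is needed for a contactomorphism.

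Alternatively — and this may be the version I would actually present for brevity — I would invoke the general principle via Weinstein/Liouville geometry: $(\mathbb{R}^5,\xi_0)$ is the ideal contact boundary of the Weinstein manifold $(\mathbb{C}^3, \omega_{std})$ after removing a point analysis, but more directly, both $(\mathbb{S}^5\setminus\{p\},\xi_{st})$ and $(\mathbb{R}^5,\xi_0)$ are contact manifolds of ``finite type'' that appear as complements and one can cite Darboux-type/Gray stability arguments: any two contact forms that are pointwise positive multiples of one another via a diffeomorphism give contactomorphic structures, so the whole content is the explicit formula plus the positivity check. I would present the explicit stereographic-type map $F:\mathbb{S}^5\setminus\{p\}\to\mathbb{R}^5$ and compute $F_*\xi_{st} = \xi_0$ by a direct (if slightly tedious) coordinate calculation.

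The main obstacle is bookkeeping rather than conceptual: getting the explicit formula for the map right so that the pulled-back one-form is \emph{manifestly} a positive multiple of $\alpha_0$, rather than merely $\ker$-equal after a further non-obvious gauge change. In particular one must be careful that $p$ is chosen so the deleted point corresponds to the ``point at infinity'' of the Heisenberg/Darboux chart and not some finite point where the formula degenerates, and one must track the conformal factor to confirm it never vanishes or changes sign on $\mathbb{S}^5\setminus\{p\}$. Once the formula and the sign of the conformal factor are pinned down, the rest is immediate from the definition of contactomorphism.
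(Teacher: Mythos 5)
The paper does not actually prove this statement: it is quoted as a standard local result (in the paragraph ``Following local results hold in any odd dimension\dots'') and used as a known fact, with the relevant background references being \cite{Ge} and \cite{E3}. So there is no in-paper argument to compare against; what you have written is the standard textbook route (transitivity of the $U(3)$-action to fix $p$, then the Cayley transform identifying $\partial\mathbb{B}^6$ minus a point with the boundary of the Siegel domain, i.e.\ the Heisenberg group), and that route is correct in outline. Two remarks. First, your sketch defers exactly the part where all the content lives: you never actually write the map or compute the conformal factor, and your ``alternative'' paragraph (Gray stability / ideal boundary) is not a proof at all, since Gray stability compares contact structures on a \emph{fixed} compact manifold and does not by itself produce the diffeomorphism you need. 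Second, a small but real gap in the first route: the contact form you get on the Heisenberg group from the Siegel-domain boundary is $dz+\sum_i(x_i\,dy_i-y_i\,dx_i)$ (up to constants), which is not literally $\alpha_0=dz+x_1\,dy_1+x_2\,dy_2$; you need one further explicit shear, e.g.\ $(x,y,z)\mapsto(x,y,z-\sum_i x_iy_i)$ composed with a rescaling, to land on the normalization the paper uses. Neither issue is fatal --- both are routine --- but as written the proposal is a plan for a proof rather than a proof, and it should either carry out the coordinate computation or cite it (Geiges, Proposition 2.1.8 and its higher-dimensional analogue) rather than assert it.
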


\begin{theorem} (Darboux's Theorem)  Let $(M,\xi)$ be any contact $5$-manifold, $p\in M$ any point. Then there is a neighborhood $U$ of $p$ in $M$ such that $(U, \xi\mid_{U})$ is contactomorphic to $(\mathbb R^{5}, \xi_{0})$. 
\end{theorem}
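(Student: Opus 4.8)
The plan is to prove Darboux's Theorem by Moser's deformation method, after first normalizing the $1$-jet at $p$ of a local contact form. Choose coordinates centered at $p$; pulling back, we may assume $p=0$, that a neighborhood of $p$ is an open set $V\subset\R^5$, and that $\xi=\textrm{Ker}(\alpha)$ for a contact form $\alpha$ on $V$. At $0$ the covector $\alpha|_0$ is nonzero and $(d\alpha)|_0$ restricts to a non-degenerate $2$-form on the hyperplane $\textrm{Ker}(\alpha|_0)$; this is exactly the pointwise content of $\alpha\wedge(d\alpha)^2\neq0$. The linear normal form for such a pair (a covector together with a compatible $2$-form) yields a linear isomorphism $A$ of $\R^5$ carrying $\big(\alpha|_0,(d\alpha)|_0\big)$ to $\big((dz)|_0,(dx_1\wedge dy_1+dx_2\wedge dy_2)|_0\big)$, i.e.\ to the $1$-jet at $0$ of $\alpha_0=dz+x_1dy_1+x_2dy_2$. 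Replacing $\alpha$ by $(A^{-1})^*\alpha$ and shrinking $V$, we may therefore assume $\alpha|_0=\alpha_0|_0$ and $(d\alpha)|_0=(d\alpha_0)|_0$.

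For $t\in[0,1]$ set $\alpha_t=(1-t)\alpha_0+t\alpha$. Every $\alpha_t$ shares the $1$-jet of $\alpha_0$ at $0$, so $\big(\alpha_t\wedge(d\alpha_t)^2\big)|_0=\big(\alpha_0\wedge(d\alpha_0)^2\big)|_0\neq0$; by openness of the contact condition and compactness of $[0,1]$ there is an open ball $W\ni0$ on which all $\alpha_t$ are contact forms. Let $\xi_t=\textrm{Ker}(\alpha_t)$ with Reeb field $R_t$. We seek a time-dependent vector field $X_t$ on $W$ with $X_t\in\xi_t$ and $\mathcal{L}_{X_t}\alpha_t+\dot\alpha_t=g_t\,\alpha_t$ for some function $g_t$. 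Since $\iota_{X_t}\alpha_t=0$ this reads $\iota_{X_t}d\alpha_t+\dot\alpha_t=g_t\alpha_t$; contracting with $R_t$ forces $g_t=\dot\alpha_t(R_t)$, and the remaining equation $\iota_{X_t}d\alpha_t|_{\xi_t}=-\dot\alpha_t|_{\xi_t}$ has a unique smooth solution $X_t\in\xi_t$ because $d\alpha_t$ is non-degenerate on $\xi_t$. As $\dot\alpha_t=\alpha-\alpha_0$ vanishes at $0$, we get $X_t(0)=0$, so the time-dependent flow $\psi_t$ of $X_t$ fixes $0$ and, after shrinking $W$, is defined for all $t\in[0,1]$. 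Then
$$\frac{d}{dt}\big(\psi_t^*\alpha_t\big)=\psi_t^*\big(\mathcal{L}_{X_t}\alpha_t+\dot\alpha_t\big)=(g_t\circ\psi_t)\,\psi_t^*\alpha_t ,$$
and since $\psi_0=\mathrm{id}$ and $\alpha_t|_{t=0}=\alpha_0$ we get $\psi_t^*\alpha_t=\lambda_t\,\alpha_0$ with $\lambda_t=\exp\!\big(\int_0^t g_s\circ\psi_s\,ds\big)>0$. Taking $t=1$ gives $\psi_1^*\alpha=\lambda_1\alpha_0$ on $W$ with $\lambda_1>0$, so $\psi_1\colon(W,\xi_0|_W)\to(U,\xi|_U)$ is a contactomorphism, where $U:=\psi_1(W)$ is the desired neighborhood of $p$; hence $(U,\xi|_U)$ is contactomorphic to $(W,\xi_0|_W)$.

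It remains to observe that $W$, an open ball about the origin in $(\R^5,\xi_0)$, is contactomorphic to all of $(\R^5,\xi_0)$; this is standard (one can push the ball out to all of $\R^5$ using the flow of the contact vector field $Y=x_1\partial_{x_1}+y_1\partial_{y_1}+x_2\partial_{x_2}+y_2\partial_{y_2}+2z\partial_z$, which satisfies $\mathcal{L}_Y\alpha_0=2\alpha_0$ and whose forward flow exhausts $\R^5$ from any ball). Composing, $(U,\xi|_U)\cong(W,\xi_0|_W)\cong(\R^5,\xi_0)$, as desired. The real content lies in the Moser step: solvability for $X_t$ is precisely where the contact condition $\alpha\wedge(d\alpha)^2\neq0$ enters (through non-degeneracy of $d\alpha_t$ on $\xi_t$), and one must ensure the flow of $X_t$ is defined up to time $1$ near $p$ — which is exactly why the first step (matching the $1$-jet at $p$, so that $X_t(p)=0$) is needed. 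I expect this to be the only nontrivial part; the passage from the ball to $\R^5$ is routine once the germ statement is in hand.
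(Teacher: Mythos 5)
The paper states this Darboux theorem as standard background and gives no proof of its own, so there is nothing to compare against; your Moser-method argument is the standard proof and its core is correct. The jet normalization at $p$ is right (the kernel of $(d\alpha)|_0$ is the Reeb direction, transverse to $\textrm{Ker}(\alpha|_0)$, and a symplectic basis of $(d\alpha)|_0$ on $\textrm{Ker}(\alpha|_0)$ together with the Reeb vector gives the linear map $A$), the interpolation $\alpha_t$ stays contact on a small ball by openness and compactness, the equation $\iota_{X_t}d\alpha_t+\dot\alpha_t=g_t\alpha_t$ is solved exactly as you say (one should also note both sides agree on $R_t$, which is immediate from $g_t=\dot\alpha_t(R_t)$), and $X_t(0)=0$ guarantees the flow exists up to time $1$ after shrinking. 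This yields the germ statement: a neighborhood of $p$ contactomorphic to an open ball $W$ in $(\R^5,\xi_0)$.

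The one genuine soft spot is the final step, which the theorem as stated does require: upgrading $W$ to all of $(\R^5,\xi_0)$. Your justification --- ``push the ball out using the flow of $Y$'' --- does not literally work, because the only way to turn the flow $\phi_t$ of $Y$ into a single map $W\to\R^5$ is to flow for a position-dependent time $\tau(q)$, and the map $q\mapsto\phi_{\tau(q)}(q)$ is in general \emph{not} a contactomorphism: its differential is $v\mapsto(d\phi_{\tau(q)})_q\bigl(v+d\tau_q(v)\,Y(q)\bigr)$, and the correction term $d\tau_q(v)\,Y(q)$ throws $\xi_0$ off itself wherever $d\tau\neq0$ (note $Y\notin\xi_0$ away from the origin). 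Applying $\phi_T$ for a single large $T$ only produces a larger ball, never all of $\R^5$. The fact you need --- that a star-shaped neighborhood of the origin in $(\R^{2n+1},\xi_0)$ is contactomorphic to the whole space, equivalently that Darboux \emph{balls} exist --- is true and standard (it is exactly how the paper defines ``Darboux ball,'' and it appears in the references, e.g.\ Geiges), but it needs either a citation or an actual argument (for instance an exhaustion $\bigcup_k\phi_k(W)=\R^5$ combined with a limiting/isotopy construction, not a naive time-reparametrized flow). Everything else in your write-up is correct.
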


A neighborhood $U$ as in the above theorem is said to be a \textbf{\textit{Darboux ball}}. In dimension three, contact structures arise in two different types. This difference plays an important role in our upcoming discussions.

\begin{definition}
	If there is an embedded disk $D$ in $(M^3, \xi)$ such that $T_{p}(\partial D) \subseteq \xi_{p}$ at every point $p \in \partial D$, then $\xi$ is called an \textbf{\textit{overtwisted}} contact structure. Such a disk $D$ is called an \textbf{\textit{overtwisted disk}}. Otherwise, $\xi$ is called a \textbf{\textit{tight}} contact structure.
\end{definition}

Next, let us recall symplectic and almost complex structures:

\begin{definition}
	A \textbf{\textit{symplectic structure}} on a manifold $X$ is a closed $2$-form $\omega  \in \Omega^{2}(M)$ (i.e., $d\omega=0$) which is nondegenerate at every $p \in X$ (i.e.,  $\forall v \in T_{p}X, v\neq 0, \exists u \in T_{p}X$ such that $\omega_{p}(v, u)\neq 0$). The pair $(X, \omega)$ is called a \textbf{\textit{symplectic manifold}}.
\end{definition}

Note that any symplectic manifold $(X, \omega)$ is necessarily even dimensional (say $2n$) and oriented. In fact, the nondegeneracy condition above is equivalent to $\omega^{n} \neq 0$. Thus, the top (volume) form $\omega^{n}$ defines the canonical (symplectic) orientation on $X$.


\begin{definition}
	A \textbf{\textit{symplectomorphism}} between two symplectic manifolds $(X_1, \omega_1), (X_2, \omega_2)$ is a diffeomorphism $\psi :X_1\rightarrow X_2$ with the property $\psi^{*}\omega_2=\omega_1$. 
	\end{definition}
	
 \begin{definition}
 	An \textbf{\textit{almost complex structure}} $J$ on a smooth $2n$-manifold $X$ is an assignment of complex structures $J_{p}$ on the tangent spaces $T_{p}X$ which depends smoothly on $p$. The pair $(X, J)$ is called an \textbf{\textit{almost complex manifold}}.
 In other words, an almost complex structure on $X$ is a $(1, 1)$-tensor field $J:TX\rightarrow TX$ such that $J^{2}=-Id$.
 \end{definition}

\begin{definition}
	An almost complex structure $J$ on $X$ is \textbf{\textit{compatible with}} a symplectic structure $\omega$ on $X$ if $\omega(u, v)=\omega(Ju, Jv)$ for all $u, v \in TX$ (i.e., $J$ preserves $\omega$), and $\omega(u, Ju)> 0$ for all nonzero $u\in TX$ (called the \textbf{\textit{taming condition}}).
\end{definition}

\begin{theorem} 
	The space of all compatible almost complex structures on $(X, \omega)$ is contractible, and hence non-empty.
\end{theorem}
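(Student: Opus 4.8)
The plan is to realize $\mathcal{J}(X,\omega)$, the space of all $\omega$-compatible almost complex structures on $X$ equipped with the $C^{\infty}$-topology, as a continuous retract of the space $\mathrm{Met}(X)$ of all Riemannian metrics on $X$, and then to use that $\mathrm{Met}(X)$ is nonempty and convex, hence contractible. Since a (categorical) retract of a contractible space is contractible, this would yield both assertions at once.

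First I would carry out the pointwise \emph{polar decomposition} construction. Given $g \in \mathrm{Met}(X)$, nondegeneracy of $\omega$ produces a unique bundle automorphism $A\colon TX\to TX$ with $\omega(u,v)=g(Au,v)$; it is $g$-skew-adjoint and invertible, so $-A^{2}=A^{\ast}A$ is $g$-self-adjoint and positive definite. Let $P=\sqrt{-A^{2}}$ be its unique positive-definite square root and set $J_{g}:=AP^{-1}$. Using that $P$ commutes with $A$, I would check the elementary identities $J_{g}^{2}=A^{2}P^{-2}=-\mathrm{Id}$, that $J_{g}$ is $g$-orthogonal, and that $g_{J_{g}}(u,v):=\omega(u,J_{g}v)=g(Pu,v)$ is symmetric and positive definite; in particular $\omega(J_{g}u,J_{g}v)=\omega(u,v)$ and $\omega(u,J_{g}u)>0$ for $u\neq 0$, so $J_{g}\in\mathcal{J}(X,\omega)$. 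This defines a map $r\colon\mathrm{Met}(X)\to\mathcal{J}(X,\omega)$, $g\mapsto J_{g}$. Conversely, each $J\in\mathcal{J}(X,\omega)$ yields a metric $g_{J}(u,v)=\omega(u,Jv)$ and hence a map $i\colon\mathcal{J}(X,\omega)\to\mathrm{Met}(X)$; I would then verify $r\circ i=\mathrm{id}$, which is immediate because for $g=g_{J}$ the associated automorphism is $A=J$ itself (since $\omega(Ju,Jv)=\omega(u,v)$), whence $-A^{2}=\mathrm{Id}$, $P=\mathrm{Id}$, and $J_{g_{J}}=J$.

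Next I would address continuity. The map $i$ is manifestly continuous, and $r$ is continuous (indeed smooth in the appropriate Fréchet sense) because the square-root operation on positive-definite self-adjoint operators is smooth: it is the inverse of the diffeomorphism $P\mapsto P^{2}$ of the open convex cone of positive-definite operators, whose derivative $H\mapsto PH+HP$ is invertible there, or one may represent $\sqrt{\,\cdot\,}$ by a Cauchy integral. Hence $\mathcal{J}(X,\omega)$ is a retract of $\mathrm{Met}(X)$. Finally, $\mathrm{Met}(X)\neq\emptyset$ by a partition-of-unity argument and is convex, so the straight-line homotopy $H(g,t)=(1-t)g+t\,g_{\ast}$ contracts it to any fixed metric $g_{\ast}$; pushing this across the retraction, $G(J,t):=r\big(H(i(J),t)\big)$ satisfies $G(\,\cdot\,,0)=r\circ i=\mathrm{id}$ and $G(\,\cdot\,,1)\equiv r(g_{\ast})$, so it is a contraction of $\mathcal{J}(X,\omega)$ onto the single compatible almost complex structure $r(g_{\ast})$.

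The main obstacle is essentially the pointwise linear-algebra lemma of the second paragraph — arranging the polar decomposition to output a genuine complex structure that is both $\omega$-invariant and $\omega$-taming — together with the routine but not entirely trivial smoothness of the square-root map; everything after that is formal. As an alternative to the metric picture one can run the polar decomposition fiberwise and reduce to the contractibility of the space of linear complex structures taming a fixed linear symplectic form, i.e.\ the Siegel upper half-space $\mathrm{Sp}(2n,\R)/\mathrm{U}(n)$, but the retraction argument above seems the most economical route to contractibility and non-emptiness simultaneously.
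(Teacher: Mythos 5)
Your proof is correct and is precisely the standard polar-decomposition retraction argument; the paper itself offers no proof, merely attributing the result to Gromov and pointing to \cite{Ge} and \cite{MS8}, where this is exactly the argument given. The pointwise linear algebra (skew-adjointness of $A$, commutation of $P=\sqrt{-A^2}$ with $A$, orthogonality and compatibility of $J_g=AP^{-1}$, and $r\circ i=\mathrm{id}$) all checks out, as does the continuity of the fiberwise square root.
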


The above theorem is due to Gromov \cite{G5} (for a proof see also \cite{Ge} or \cite{MS8}) and provides a very useful connection between symplectic and (almost) complex geometry.


\subsection{Liouville, Weinstein and Stein Manifolds} \label{subsec_Liou_Wein_Stein}

 Now we recall special families of symplectic manifolds in which we are interested. More details about definitions and facts given below can be found in \cite{C2} and \cite{MS8}.

\begin{definition}
	A \textbf{\textit{Liouville cobordism}} is a  symplectic cobordism $(X, \omega)$ from $\partial_- X=M_{-}$ to $\partial_+ X= M_{+}$ with a Liouville structure. A \textbf{\textit{Liouville structure}} means that there is a $1$-form $\alpha$ on $X$ such that $\omega=d\alpha$ and the $\omega$-dual vector field $Z$ of $\alpha$ is a \textbf{\textit{Liouville vector field}} for $\omega$ (i.e., $\mathcal{L}_{Z} \omega=\omega$) transversely pointing inward (resp. outward) along the boundary component $\partial_-X$ (resp. $\partial_+X$). A Liouville cobordism with $\partial_- X=\emptyset$ is called a \emph{\textbf{Liouville domain}}.\\
	When $X$ is an open manifold, if we assume that the flow of $Z$ exists for all times and there exists an exhaustion $X=\bigcup_{k=1}^{\infty}X^{k}$ by compact domains $X^{k}\subset X$ such that each $(X^{k}, \alpha|_{X^k})$ is a Liouville domain with convex boundary $(\partial X^{k}, \alpha|_{\partial X^k})$ for all $k\geq 1$, then $(X, \alpha)$ is called a \textbf{\textit{Liouville manifold}}. Since $\omega$ and $Z$ uniquely determine $\alpha$ (namely, $\alpha=\iota_Z\omega$), one can also use the notation $(X, \omega, Z)$ for Liouville cobordisms/domains/manifolds.
\end{definition}


By putting more conditions on Liouville manifolds, one can consider the class of Weinstein/Stein manifolds. 
In order to define them, we need some preliminary definitions:

\begin{definition}
(i) A vector field $Z$ on a smooth manifold $X$ is said to \emph{\textbf{gradient-like}} for a smooth function $\phi: X \to \R$ if $Z \cdot \phi=\mathcal{L}_{Z} \phi >0$ away from the critical point of $\phi$.\\
(ii) A real-valued function is said to be \emph{\textbf{exhausting}} if it is proper and bounded from below.\\
(iii) An exhausting function $\phi: X\to \R$ on a symplectic manifold $(X,\omega)$ is said to be $\omega$-\emph{\textbf{convex}} if there exists a complete Liouville vector field $Z$ which is gradient-like for $\phi$.\\
(iv) Suppose that $(X, J)$ is an almost complex manifold. Then a smooth map $\phi:X \rightarrow \mathbb{R}$ is said to be $J$-\textbf{\textit{convex}} if $\omega_{\phi}:=-d(d \phi\circ J)$ is nondegenerate (i.e., $\omega_{\phi}(v, Jv)>0$ for all $v \neq 0$), and so symplectic.
\end{definition}

\begin{definition} \label{def:Weinstein_manifold}
A \emph{\textbf{Weinstein manifold}} $(X,\omega,Z,\phi)$ is a symplectic manifold $(X,\omega)$ which admits a $\omega$-convex Morse function $\phi:X \to \R$ whose complete gradient-like Liouville vector field is $Z$. The triple $(\omega,Z,\phi)$ is called a \emph{\textbf{Weinstein structure}} on $X$. A \emph{\textbf{Weinstein cobordism}} $(X,\omega,Z,\phi)$ is a Liouville cobordism $(X,\omega,Z)$ whose Liouville vector field $Z$ is gradient-like for a Morse function $\phi:X \to \R$ which is constant on the boundary $\partial X$. A Weinstein cobordism with $\partial_- X=\emptyset$ is called a \emph{\textbf{Weinstein domain}}.
\end{definition}

Any Weinstein manifold $(X,\omega,Z,\phi)$ can be exhausted by Weinstein domains $$X_k = \{\phi^{-1}(-\infty,d_k] \} \subset X$$ where $\{d_k\}$ is an increasing sequence of regular values of $\phi$, and therefore, any Weinstein manifold is a Liouville manifold. In particular, any Weinstein domain is a Liouville domain. Also note that any Weinstein domain $(X,\omega,Z,\phi)$ has the convex boundary $(\partial X, \textrm{Ker}((\iota_{Z}\omega)|_{\,\partial X}))$.

The following topological characterization of  Weinstein domains will be important for us.

\begin{theorem} [\cite{W1}, see also Lemma 11.13 in \cite{C2}] \label{thm:Top_Charac_of_Weinstein_manifolds}
Any Weinstein domain of dimension $2n$ admits a handle decomposition whose handles have indices at most $n$.
\end{theorem}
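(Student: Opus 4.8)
The plan is to obtain the handle decomposition directly from the Morse function $\phi$ that is part of the Weinstein structure, and then to bound the index of each of its critical points by $n$ using the interaction of the Liouville flow with $\omega$. First I would invoke the classical Morse-theoretic fact that, since $\phi$ is an exhausting Morse function on the compact manifold $X$ which is regular and constant on $\partial X$, all of its critical points lie in the interior, there are only finitely many of them, and the sublevel sets $X_c=\{\phi\le c\}$ exhibit $X$ as built from $\emptyset$ by attaching, for each critical point $p$, a single handle of index $\textrm{ind}_\phi(p)$ whose core may be taken to lie in the local stable manifold of the Liouville field $Z$ at $p$. Hence it suffices to show $\textrm{ind}_\phi(p)\le n$ for every critical point $p$ of $\phi$.

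Fix such a $p$, set $k=\textrm{ind}_\phi(p)$, let $\psi_t$ denote the local flow of $Z$, and let $W$ be the local stable manifold of $p$ for $Z$, i.e. the germ at $p$ of the set of points whose forward $Z$-orbit stays near $p$ and converges to $p$. Because $Z$ is gradient-like for the Morse function $\phi$, it has a nondegenerate zero at $p$ whose stable subspace has dimension exactly $k$; thus $W$ is an embedded $k$-dimensional $Z$-invariant submanifold near $p$, tangent there to that subspace, and the restricted flow $\psi_t|_W$ contracts $W$ to $p$ as $t\to+\infty$. The heart of the matter is that $W$ is isotropic. From $\mathcal{L}_Z\omega=\omega$ we get $\psi_t^{*}\omega=e^{t}\omega$, so for $x\in W$ and $u,v\in T_xW$, noting that $(\psi_t)_*u,(\psi_t)_*v\in T_{\psi_t(x)}W$,
$$\omega_x(u,v)=e^{-t}\,\omega_{\psi_t(x)}\!\big((\psi_t)_*u,\,(\psi_t)_*v\big);$$
as $t\to+\infty$ the right-hand side tends to $0$, because $\psi_t(x)\to p$, the form $\omega$ is smooth (hence bounded near $p$), and $(\psi_t)_*u,(\psi_t)_*v\to 0$. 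Therefore $\omega|_W\equiv 0$ (equivalently $\alpha|_W\equiv 0$, using $\mathcal{L}_Z\alpha=\alpha$ for $\alpha=\iota_Z\omega$). Since an isotropic submanifold of the $2n$-dimensional symplectic manifold $(X,\omega)$ has dimension at most $n$, we get $k=\dim W\le n$, and so every handle in the decomposition above has index at most $n$.

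I expect the main obstacle to lie not in the symplectic linear algebra above, which is short, but in the Morse-theoretic bookkeeping around it: one must confirm that the Weinstein data genuinely yields a finite handle decomposition whose handle indices coincide with the Morse indices of $\phi$ (this uses properness/exhaustion of $\phi$, nondegeneracy of its critical points, and that $\phi|_{\partial X}$ is regular and maximal), and one must justify that the local stable manifold of the gradient-like field $Z$ at $p$ has dimension exactly $\textrm{ind}_\phi(p)$ and not less. For the latter it is cleanest either to use the standard normal form for a Weinstein structure near a critical point, or to pass to the Weinstein completion of $X$ — in which $Z$ becomes complete and honest stable and unstable manifolds are available — and to apply the inertia (Lyapunov) theorem to the pair formed by the linearization $DZ_p$ and the Hessian of $\phi$ at $p$: the condition $Z\cdot\phi>0$ near $p$ forces $DZ_p$ to have precisely $k$ eigenvalues with negative real part. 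With these points in place the proof is complete.
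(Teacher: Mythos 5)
Your argument is correct and is essentially the standard proof found in the references the paper cites for this statement (the paper itself offers no proof, only the citations to Weinstein and to Lemma 11.13 of Cieliebak--Eliashberg): the Liouville condition $\mathcal{L}_Z\omega=\omega$ gives $\psi_t^{*}\omega=e^{t}\omega$, forcing the stable manifold of each zero of $Z$ to be isotropic and hence of dimension at most $n$, which bounds the Morse index of $\phi$ and therefore every handle index. The two technical points you flag -- that the handle indices agree with the Morse indices of $\phi$, and that the stable dimension of the gradient-like field $Z$ at a critical point equals $\textrm{ind}_\phi(p)$ -- are exactly the ones handled in the cited sources (via the normal form for $Z$ near a Weinstein critical point), so your proof is complete modulo those standard facts.
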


Originally, Stein manifolds are defined as the class of manifolds which can be holomorphically embedded into some complex space $\mathbb{C}^N$ for $N$ large enough, and hence they are complex manifolds. In terms of the structure of the present paper, they can be defined as follows:

\begin{definition}
A \textbf{\textit{Stein manifold}}	is a triple $(X, J, \phi)$ where $J$ is an almost complex structure on $X$ and $\phi:X \rightarrow \mathbb{R}$ is an exhausting $J$-convex Morse function which is also $\omega_{\phi}$-convex. A \emph{\textbf{Stein cobordism}} $(X,J,\phi)$ is a Weinstein cobordism $(X,\omega_{\phi},Z,\phi)$. A Stein cobordism with $\partial_- X=\emptyset$ is called a \emph{\textbf{Stein domain}}. 
\end{definition}

It is not hard to observe that there is an underlying a Weinstein structure for any given Stein structure. Indeed, it has been shown that the converse is also true: 

\begin{theorem} [\cite{C2}] \label{thm:Weinstein_implies_Stein}
Any Weinstein structure on a manifold $X$ can be deformed to another one which is the underlying Weinstein structure of some Stein structure on $X$.
\end{theorem}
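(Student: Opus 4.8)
The plan is to follow the handle-by-handle strategy of Cieliebak--Eliashberg \cite{C2}. The starting point is the observation that a Weinstein structure carries a built-in handle decomposition: the Liouville vector field $Z$ is gradient-like for the Morse function $\phi$, so the sublevel sets $X_k=\phi^{-1}(-\infty,d_k]$ between consecutive regular values are obtained from one another by attaching a handle, and by Theorem \ref{thm:Top_Charac_of_Weinstein_manifolds} these handles all have index at most $n=\tfrac12\dim X$. Moreover, near each critical point one can bring $(\omega,Z,\phi)$ into a normal form in which the handle is the \emph{standard Weinstein handle}: its attaching region is a neighborhood of an isotropic sphere $\Lambda$ in the contact-type boundary $(Y,\xi)=(\partial X_{k-1},\ker(\iota_Z\omega|_{\partial X_{k-1}}))$, together with a trivialization of the symplectic conormal of $\Lambda$. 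Thus, up to Weinstein homotopy, a Weinstein structure is the same data as an ordered sequence of framed isotropic attaching spheres.

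First I would fix, using Gromov's theorem that the space of $\omega$-compatible almost complex structures is contractible and nonempty, an almost complex structure $J_0$ on $X$; the goal is then to deform $(\omega,Z,\phi)$ through Weinstein structures to one of the form $(\omega_\psi,Z_\psi,\psi)$ for a genuinely $J$-convex exhausting Morse function $\psi$, with $J$ a deformation of $J_0$. The construction is inductive on handles. The subcritical handles (index $<n$) are the easy case: the standard subcritical Weinstein model handle sits in $\C^n$ and admits the standard plurisubharmonic model function, so it can be attached as a Stein handle, and the resulting Stein--Weinstein structure agrees with the standard Weinstein one on that handle (the framing datum here being absorbed by the flexibility of the subcritical model). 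The delicate case is a critical handle of index exactly $n$, whose attaching sphere is a \emph{Legendrian} sphere $\Lambda\subset(Y,\xi)$ in the boundary of the part already constructed.

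The heart of the argument --- and the main obstacle --- is to show that this critical handle attachment can be realized Steinly after a Weinstein homotopy. One needs: (a) to isotope along $Y$, via Gray stability, so that the boundary of the Stein domain built so far is contactomorphic to $(Y,\xi)$; (b) to bring $\Lambda$, up to Legendrian isotopy, into a position where Eliashberg's Stein handle-attachment theorem applies, namely that attaching an $n$-handle to a Stein domain along a Legendrian sphere yields again a Stein domain whose underlying Weinstein structure is the standard one; and (c) to match the framing of the handle core, which for a Legendrian sphere is essentially canonical (the obstruction to matching two framings lies in a homotopy group of $U(n)$ and is absorbed into the choice of $J$). Step (b) is where the real analysis lives: in dimension $2n\ge 6$ one invokes the parametric h-principle for isotropic/Legendrian embeddings together with Eliashberg's existence and uniqueness h-principles for $J$-convex functions, whereas the four-dimensional case $n=2$ must be treated separately using the Eliashberg--Gompf description of Stein surfaces by Legendrian surgery diagrams, where the framing constraint becomes the familiar ``$\mathrm{tb}-1$'' condition.

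Finally, assembling the handles: one checks that the sequence of Weinstein homotopies performed handle-by-handle can be made global and compatible on overlaps, using that Weinstein homotopy is a transitive relation and that all the auxiliary choices (of $J$, of normal forms near critical points, of Legendrian isotopies of attaching spheres) vary continuously in families. The output is a Weinstein structure $(\omega_\psi,Z_\psi,\psi)$ with $\psi$ exhausting, $J$-convex, and $\omega_\psi$-convex --- that is, a Stein structure $(X,J,\psi)$ whose underlying Weinstein structure is Weinstein-homotopic to the one we began with. I expect the two places requiring the most care to be the bookkeeping of framings in the critical handles and the passage from the local (near each handle) deformations to a single global one.
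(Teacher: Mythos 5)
The paper does not prove this statement; it is quoted directly from Cieliebak--Eliashberg \cite{C2} as background. Your outline accurately reproduces the strategy of the proof given there: read off the Weinstein handle decomposition from the gradient-like Liouville field, realize subcritical handles by the standard plurisubharmonic model, realize critical handles via Eliashberg's Legendrian/Stein handle-attachment theorem together with the h-principles for isotropic embeddings and $J$-convex functions in dimension $\ge 6$, and fall back on the Eliashberg--Gompf ``$\mathrm{tb}-1$'' surgery description in dimension $4$. As a blind reconstruction of the cited argument this is essentially correct; the only caveat is that it remains a roadmap --- the genuinely hard content (the existence and uniqueness h-principles for $J$-convex functions, the $J$-convex surrounding constructions, and the global assembly of the handle-by-handle homotopies) is invoked rather than supplied, which is unavoidable for a result whose full proof occupies a substantial part of \cite{C2}.
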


\begin{definition}
A contact manifold $(M, \xi)$ is called \textbf{\textit{Stein fillable}} (or \textbf{\textit{holomorphically fillable}} if there is a Stein domain $(X, J, \phi)$ such that $\partial X=M$ and $\xi=\textrm{Ker}(-(d \phi\circ J)|M)$.
\end{definition}


\begin{theorem} [\cite{EG}] \label{thm:Stein_fillable_tight}
Any Stein fillable contact $3$-manifold is tight.	
\end{theorem}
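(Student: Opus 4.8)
The statement is the Eliashberg--Gromov theorem, and the plan is to prove it by contradiction using Gromov's technique of filling by pseudoholomorphic disks. Suppose $(M,\xi)$ is Stein fillable by $(X,J,\phi)$, with $\partial X=M$ and $\xi=\textrm{Ker}(-(d\phi\circ J)|_M)$, but that $\xi$ is overtwisted, witnessed by an embedded overtwisted disk $D\subset M$. The first step is to bring $D$ into a convenient normal form: after a $C^\infty$-small perturbation and then pushing the interior of $D$ slightly into $X$, one obtains an embedded disk $D'\subset X$ that is totally real with respect to $J$ except at a single elliptic complex tangency $e$ in its interior, while $\partial D'=\partial D$ stays a Legendrian circle in $M$ whose characteristic-foliation data is that of an overtwisted disk. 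Here one uses the standard facts that a generic surface in an almost complex $4$-manifold is totally real away from isolated complex points, and that the characteristic foliation of an overtwisted disk can be normalized so that $\partial D$ is a closed leaf (limit cycle) and there is exactly one interior singularity.

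Next, by Bishop's local existence theorem, in a neighborhood of $e$ the disk $D'$ bounds a smooth $1$-parameter family $\{u_t:(\mathbb{D}^2,\partial\mathbb{D}^2)\to(X,D')\}$ of small embedded $J$-holomorphic disks that collapse onto $e$ and whose boundaries foliate $D'$ near $e$. Let $\mathcal{M}$ be the connected component containing this Bishop family of the moduli space of $J$-holomorphic disks with boundary on $D'$, rigidified by boundary marked points. The analytic heart of the proof is to show that $\mathcal{M}$ is a compact $1$-manifold with boundary, one boundary end being the elliptic point $e$. The two ingredients are automatic transversality for these disks (which have small Maslov index) in a $4$-manifold, so that $\mathcal{M}$ is smooth of the expected dimension, and Gromov compactness, so that the only ways the family could fail to continue are disk or sphere bubbling, or a boundary point of some $u_t$ escaping toward the concave end of $X$.

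It is precisely here that the $J$-convexity of $\phi$ --- the property distinguishing a Stein filling from a general symplectic filling --- enters, through the maximum principle: for any $J$-holomorphic curve $u$ in $X$, the function $\phi\circ u$ is subharmonic and hence attains its maximum on $u(\partial\mathbb{D}^2)\subset D'$. Since $D$ was pushed only slightly inward, $D'$ lies in a fixed compact sublevel set $\{\phi\le c\}$, so every disk in $\mathcal{M}$ is confined to $\{\phi\le c\}$; combined with the exactness of a Stein filling this yields a uniform symplectic-area bound (the area is computed from the boundary, which stays near $D$) and, via the monotonicity lemma, rules out both escape to the concave end and sphere/disk bubbling. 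Hence the Bishop family propagates as an honest $1$-parameter family of embedded disks whose boundaries sweep out $D'\setminus\{e\}$ and whose images fill a compact region of $X$.

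Since neither bubbling nor escape can occur, $\mathcal{M}$ is compact, so beyond the elliptic end the family must limit to a $J$-holomorphic disk $u_\infty$ whose boundary is the closed leaf $\partial D$. This realizes $\partial D$ as the boundary of an embedded holomorphic disk in the Stein filling, which by the adjunction / slice-Bennequin-type inequality forces the Thurston--Bennequin invariant of the Legendrian knot $\partial D$ to be strictly negative; but $\partial D$ bounds the overtwisted disk $D$, relative to which its Thurston--Bennequin invariant equals $0$ --- a contradiction. Therefore $\xi$ admits no overtwisted disk and is tight. I expect the main obstacle to be the control of $\mathcal{M}$ in the second and third steps: verifying transversality and, above all, excluding bubbling and boundary escape, which is exactly where Gromov compactness must be combined with the plurisubharmonicity of $\phi$ to close the argument.
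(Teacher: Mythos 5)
The paper does not prove this statement at all: it is quoted as a known result of Eliashberg--Gromov (the citation \cite{EG}) and used as a black box, so there is no ``paper's proof'' to compare against. Judged on its own, your outline is the standard filling-by-pseudoholomorphic-disks argument for this theorem, and its overall architecture --- normalize the overtwisted disk, push it into the filling to get a totally real disk with one elliptic complex point, start a Bishop family, use automatic transversality and Gromov compactness, and invoke plurisubharmonicity of $\phi$ via the maximum principle to confine the disks and exclude bubbling and boundary escape --- is the correct one and correctly identifies where Stein-ness (as opposed to mere symplectic fillability) enters.

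The one place where your sketch papers over a genuine difficulty is the end-game. You assert that the family ``must limit to a $J$-holomorphic disk $u_\infty$ whose boundary is the closed leaf $\partial D$'' and then appeal to an adjunction/slice-Bennequin inequality to force $tb(\partial D)<0$. Two issues: first, the boundary of a nonconstant $J$-holomorphic disk attached to the totally real part of $D'$ is everywhere \emph{transverse} to the characteristic foliation of $D'$, so it cannot converge to (let alone equal) a closed leaf; the classical argument turns exactly this into the contradiction --- the compact $1$-dimensional moduli space must have a second boundary end, the only admissible ends are elliptic singular points of the foliation, and there is only one --- rather than producing a limit disk with boundary $\partial D$. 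Second, the slice-Bennequin-type inequality for Legendrian knots bounding disks in Stein domains is itself usually established \emph{after} (or by means of) tightness, so invoking it here risks circularity; if you want the $tb$-style contradiction you should instead compare framings directly (the contact framing of $\partial D$ agrees with the $D$-framing since $\partial D$ is a limit cycle, while the framing induced by an embedded holomorphic disk differs from it), which takes real work. So the proposal is a sound outline of the right proof, but the final contradiction as stated contains a gap that the classical argument closes differently.
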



\subsection{Open Book Decompositions}\label{subsec_open_books}

Open book decompositions are topological structures and they have a strong relationship with contact structures. We refer the reader to \cite{Ge} and \cite{Et} for more details.
	
\begin{definition}
An (\textbf{\textit{embedded}} or \textbf{\textit{non-abstract}}) \textbf{\textit{open book}} (\textbf{\textit{decomposition}}) of a closed $(2n+1)$-manifold $M$ is determined by a pair $(B, f)$ where $B$ is a codimension $2$ submanifold with trivial normal bundle and $f : M \setminus B\rightarrow S^{1}$ is a fiber bundle projection such that the normal bundle has a trivialization $B \times D^{2}$, where the angle coordinate on the disk agrees with the fibration map $f$. The $(2n-1)$-manifold $B$ is called the \textbf{\textit{binding}} and for any $t_{0} \in S^{1}$, the $2n$-manifold $X=f^{-1}(t_{0})$ (a fiber of $f$) is called a \textbf{\textit{page}} of the open book.
\end{definition}

An alternative definition of an open book decomposition can be given as follows:

\begin{definition}
	An open book $(B, f)$ determines an \textbf{\textit{abstract open book}} $(\bar{X},h)$ where $\bar{X}$ denotes the closure of a page $X$ in $M$, and $h:\bar{X} \to \bar{X}$ is the self-diffeomorphism (which is identity near the binding $B=\partial \bar{X}$) defined by the time-one map of the flow lines along the $S^1$-direction. The map $h$ is called the \textbf{\textit{monodromy}} of the open book decomposition.
\end{definition}

In fact, the two notions of open book decomposition are closely related. The difference is that when discussing open books (non-abstract), we can discuss the binding and pages up to isotopy in $M$, whereas when discussing abstract open books we can only discuss them up to diffeomorphism.

	
The following definition is due to Giroux:

\begin{definition}[\cite{G2}] \label{def:compatibility}
	A contact structure $\xi$ on a closed $(2n+1)$-manifold $M$ is said to be \textbf{\textit{supported by}} (or \textbf{\textit{carried by}}, or \textbf{\textit{compatible with}}) an open book $(B, f)$ on $M$ if there exists a contact form $\alpha$ for $\xi$ such that 
	\begin{itemize}
		\item[(i)] $(B, \textrm{Ker}(\alpha|_{TB}))$ is a contact $(2n-1)$-manifold,
		\item[(ii)] for any $t\in S^1$, the page $(X=f^{-1}(t),d\alpha|_{TX})$ is a symplectic $2n$-manifold, and
		\item[(iii)] if $\bar{X}$ denotes the closure of a page $X$ in $M$, then the orientation on $B=\partial \bar{X}$ induced by its contact form $\alpha |_{TB}$ coincides with its orientation as the convex boundary of $(\bar{X}, d\alpha|_{TX})$.
	\end{itemize}
\end{definition}



\begin{theorem}[\cite{G2}] \label{thm:existence_OB}
Every contact structure on a closed manifold is compatible with some open book decomposition with Weinstein (and so Stein) pages.
\end{theorem}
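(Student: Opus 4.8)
\section*{Proof proposal}

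The plan is to follow Giroux's strategy, which in general dimension $2n+1$ rests on Donaldson's approximately holomorphic techniques, while in dimension three there is a more hands-on construction. First I would fix a contact form $\alpha$ with $\textrm{Ker}(\alpha)=\xi$ and, using the (already cited) theorem that the space of almost complex structures compatible with the symplectic bundle $(\xi,d\alpha|_{\xi})$ is contractible and nonempty, choose such a $J$; together with $\alpha$ this organizes $TM=\xi\oplus\R R_{\alpha}$ into an almost-contact metric structure. Then one forms a Hermitian line bundle $L\to M$ equipped with a connection whose curvature approximates $-i\,d\alpha$ (the usual Donaldson preliminaries, after the standard rescaling) and passes to the high tensor powers $L^{\otimes k}$.

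The technical heart, and the step I expect to be the main obstacle, is to apply Donaldson-type estimated transversality to produce, for $k\gg 0$, an asymptotically holomorphic section $s_k$ of $L^{\otimes k}$ that is uniformly transverse to $0$ and whose normalization $f:=s_k/|s_k|:M\setminus s_k^{-1}(0)\to S^1$ has no critical points. Granting this, set $B:=s_k^{-1}(0)$; it is a smooth codimension-two submanifold with trivial normal bundle on which $\alpha$ restricts to a contact form, $f$ is then a locally trivial fibration of $M\setminus B$ over $S^1$, and the asymptotic holomorphicity supplies the required tubular model $B\times D^2$ whose angular coordinate matches $f$. One then verifies Giroux's compatibility conditions (i)--(iii) for a controlled rescaling of $\alpha$: positivity of $d\alpha$ on the pages $X=f^{-1}(t)$, positivity of $\alpha|_{TB}$, and the boundary-orientation condition all follow from the estimates on $s_k$. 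The delicate point is the refined transversality controlling $\bar\partial f$ away from $B$, exactly as in Donaldson's construction of symplectic Lefschetz pencils, here adapted to the odd-dimensional contact setting.

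It remains to arrange the pages to be Weinstein, and then Stein. Each page $(X,d(\alpha|_X))$ is an exact symplectic manifold with convex contact boundary $B$; to promote it to a Weinstein manifold one exhibits an exhausting Morse function for which the Liouville vector field dual to the primitive $\alpha|_X$ is gradient-like, and such a function can be extracted from $\log|s_k|$ restricted to the page (its critical points are again controlled by estimated transversality), possibly after a Weinstein homotopy as permitted by Definition \ref{def:Weinstein_manifold}. Once the pages are Weinstein, Theorem \ref{thm:Weinstein_implies_Stein} deforms their structure to the one underlying a Stein structure, which yields the parenthetical ``(and so Stein)''. Finally, in dimension three the Donaldson machinery can be bypassed entirely: one uses a contact cell decomposition of $(M^3,\xi)$ --- a triangulation whose $1$-skeleton is Legendrian and whose $2$-cells have Legendrian boundary with Thurston--Bennequin number $-1$ --- takes a regular neighborhood of the $1$-skeleton as one half of a page and the closure of its complement as the other half, and notes that the resulting pages are compact surfaces with boundary, hence automatically Weinstein.
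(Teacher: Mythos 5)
This statement is quoted in the paper from Giroux's ICM address \cite{G2}; the paper offers no proof of it, so there is nothing internal to compare your argument against. Judged on its own, your proposal is a faithful roadmap of the standard Giroux--Mohsen strategy: trivial prequantum bundle (since $d\alpha$ is exact, sections of $L^{\otimes k}$ are just complex-valued functions), asymptotically holomorphic sections at high tensor powers, $B=s_k^{-1}(0)$ as a contact submanifold, $f=s_k/|s_k|$ as the fibration, Weinstein structures on the pages extracted from $\log|s_k|$ with index control coming from approximate plurisubharmonicity, and the contact cell decomposition shortcut in dimension three. The citation of Theorem \ref{thm:Weinstein_implies_Stein} for the parenthetical ``(and so Stein)'' is also the right move.

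That said, as a proof this has a genuine gap, and you have correctly located it yourself: the ``Donaldson-type estimated transversality to produce $s_k$ uniformly transverse to $0$ and whose normalization has no critical points'' is not a routine application of Donaldson's symplectic results but is the entire content of the theorem. Making $f$ a genuine submersion requires a second, separate estimated-transversality statement for the phase (controlling $\partial f$ against $\bar\partial f$ away from $B$), and verifying that the resulting pages carry honest Weinstein structures (not merely exact symplectic ones) requires quantitative control of the critical points of $|s_k|$ on each fiber; none of this is carried out. Two smaller points: the curvature normalization should be $-ik\,d\alpha$ at level $k$ after rescaling the metric by $k$, and in the three-dimensional argument the ribbon of the Legendrian $1$-skeleton is the page itself (with the complement of its neighborhood shown to be a product region), not ``one half of a page'' with the complement as the other half. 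As a blind reconstruction of a deep external theorem your outline is accurate, but it should be presented as a citation with a sketch, exactly as the paper does, rather than as a proof.
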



We now explain how the page and the monodromy of an open book changes under a certain process called \textit{stabilization}.

\begin{definition}[\cite{G2}]
	Let $D^{n} \subset X^{2n}$ be an $n$-dimensional disc embedded into the $2n$-dimensional page of an open book $(X,h)$ of an odd dimensional manifold $M$ such that $D^{n}$ meets $\partial X$ transversely and exactly in its boundary $\partial D^{n}$ and such that the normal bundle of $\partial D^{n}$ in $\partial X$ is trivial. Attach an $n$-handle $H$ to $X$ along  $\partial D^{n}$ in such a way that the normal bundle of the sphere $S^{n}=D^{n}\cup core(H)$ is isomorphic to $T^{*}S^{n}$. Then the open book $(X \cup H, h\circ\tau)$ is called a \textbf{\textit{positive stabilization}} of $(X,h)$, where $\tau$ denotes a right-handed Dehn twist along the sphere $S^{n}$. Similarly, one can also define \textbf{\textit{negative stabilization}} using left-handed Dehn twist $\tau^{-1}$ instead of right-handed one. 
\end{definition}

\begin{remark}\label{rmk:stabilization}
We note that the original open book $(X,h)$ and the stabilized open book $(X \cup H, h\circ\tau)$ give rise (up to diffeomorphism) to the same manifold $M$. Indeed, the sphere $\partial D^{n} \subset \partial X= B \subset (X,h) $ is a sphere with trivial normal bundle in $M$, since the binding $B$ has trivial normal bundle by definition. Attaching handles to each page is equivalent to a surgery along $\partial D^{n}$. The manifold $M'$ obtained by that surgery carries the open book structure $(X \cup H,h)$. Performing the Dehn twist $\tau$ (or $\tau^{-1}$) along $S^{n}$ is equivalent to a surgery cancelling the one corresponding to the handle attachment.
\end{remark}

Although contact structures are purely geometric objects while open book decompositions are purely topological, Giroux found a very useful relation between them as stated below:

\begin{theorem}\cite{G2} \label{thm:Giroux_Corresp}
	Let $M$ be a closed  $(2n+1)$-manifold. Then there is one to one correspondence between co-oriented contact structures on $M$ up to isotopy and open book decompositions of $M$ with Weinstein (and so Stein) pages up to positive stabilization.
\end{theorem}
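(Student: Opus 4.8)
The plan is to establish the correspondence by constructing two maps between the two quotient sets — one sending an open book with Weinstein pages to a contact structure, the other sending a contact structure to a supporting open book — and then showing that these maps descend to quotients and are mutually inverse. The existence half of the reverse map is already available to us as Theorem \ref{thm:existence_OB}, so the work lies in (a) the construction in the forward direction, (b) well-definedness on both quotients, and (c) bijectivity.

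First I would carry out the generalized Thurston--Winkelnkemper construction for the map from open books to contact structures. Given an abstract open book $(\bar X,h)$ with Weinstein (hence Liouville) page $(\bar X,\omega=d\lambda)$ and monodromy $h$ that may be taken to preserve the Liouville form near $\partial \bar X$ and to equal the identity there, one builds a contact form on the mapping torus $\bar X_h = \bar X \times [0,1]/\!\sim$ of the shape $\alpha = \lambda + C\,dt$ for a sufficiently large constant $C$, and extends it across the binding neighborhood $B\times D^2$ by a standard radial model. The Weinstein/Liouville condition on the page is exactly what forces $\alpha \wedge (d\alpha)^n \neq 0$, so this yields a contact structure $\xi$ compatible with $(B,f)$ in the sense of Definition \ref{def:compatibility}. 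To see that this map is well-defined up to isotopy for a fixed open book, I would note that any two compatible contact forms can be joined through compatible forms (the admissible $dt$-coefficients form a convex set once taken large enough), whence Gray stability produces an isotopy of the underlying contact structures.

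Next I would check invariance under positive stabilization, so that the forward map factors through the stabilization equivalence. A positive stabilization attaches a Weinstein $n$-handle $H$ to the page and, by Remark \ref{rmk:stabilization}, leaves the ambient $M$ unchanged up to diffeomorphism, while the right-handed Dehn twist along the Lagrangian sphere $S^n = D^n \cup \mathrm{core}(H)$ is precisely the monodromy modification produced by a contact plumbing with a standard Weinstein piece. A local-model computation then shows the supported contact structure is unchanged up to isotopy. Combined with the existence statement of Theorem \ref{thm:existence_OB}, this makes both maps well-defined on the respective quotients, and the two are mutually inverse because the Thurston--Winkelnkemper structure built from $(B,f)$ is by construction supported by $(B,f)$, while the compatibility conditions recover $\xi$ up to isotopy from any of its supporting open books.

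The step I expect to be by far the hardest — and the genuine content of the theorem of \cite{G2} — is the uniqueness up to \emph{common} positive stabilization: any two open books $(B_0,f_0)$ and $(B_1,f_1)$ supporting isotopic contact structures admit a common positive stabilization. In dimension three this is Giroux's argument via contact cell decompositions and convex surface theory, where each open book is realized from a compatible cell decomposition and any two cell decompositions share a common refinement corresponding to a common stabilization. In higher dimensions the argument is substantially more delicate and passes through the dictionary between supporting open books and Weinstein/Lefschetz structures on the pages; this uniqueness is the deep part, and in the present paper I would invoke \cite{G2} for it rather than reprove it, since the remaining ingredients above are essentially formal once the Weinstein structure on the page is in hand.
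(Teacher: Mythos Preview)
The paper does not prove this theorem at all: it is stated with the citation \cite{G2} and no proof environment follows. This is a quoted background result from Giroux's ICM address, included in the Preliminaries section only to record the Giroux correspondence for later reference; the authors make no attempt to supply an argument, nor do they need one for the results of the paper.

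Consequently there is nothing to compare your proposal against. Your outline is a reasonable high-level sketch of the standard story (generalized Thurston--Winkelnkemper for the forward map, Gray stability for uniqueness of the supported contact structure, invariance under positive stabilization via Remark~\ref{rmk:stabilization}, and finally the hard uniqueness-up-to-common-stabilization step which you correctly identify as the substantive content and propose to cite rather than reprove). But since the paper itself simply invokes \cite{G2}, the appropriate ``proof'' here is no proof at all: the statement is taken as a black box from the literature.
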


This correspondence between co-oriented contact structures and open book decompositions is called the \textbf{\textit{Giroux Correspondence}}. 




\subsection{Legendrian Submanifolds and Thurston-Bennequin Invariant} \label{subsec_Legendrian_submanifolds}

Legendrain submanifolds are the most interesting ones in contact geometry. Although they are defined in any odd dimensions, we restrict our attention mostly to dimension five and three. The non-integrability condition of contact $5$-manifolds ensures that there is no submanifold of dimension greater than or equal to $3$ which is tangent to the contact distribution. However, we can find $2$-dimensional submanifolds whose tangent spaces lie inside the contact field. 

\begin{definition}
	Let $(M^{5}, \xi)$ be a contact manifold. A submanifold $L$ of $(M^{5}, \xi)$ is called an \textbf{\textit{isotropic submanifold}} if $T_{p}L \subset \xi_{p}$ for all points $p \in L$. An isotropic submanifold of dimension two (an \textit{isotropic surface}) is called a \textbf{\textit{Legendrian submanifold}} (\textbf{\textit{surface}}). A \textbf{\textit{Legendrian embedding}} is an embedding $\phi:\Sigma^2 \hookrightarrow (M^{5},\xi)$ of a smooth manifold $\Sigma^2$ such that the image $L^2=\phi(\Sigma^2)$ is an embedded Legendrian surface. A smooth 1-parameter family of embedded Legendrian surfaces is called a \textbf{\textit{Legendrian isotopy}}. Two Legendrian surfaces $L$, $L'$ are called \textbf{\textit{Legendrian isotopic}} if there is a smooth $1$-parameter family $L_{t}$, $t \in [0, 1]$, of embedded Legendrian surfaces such that $L_{0}=L$ and $L_{1}=L'$. Equivalently, a Legendrian isotopy is a smooth 1-parameter family $\phi_t:\Sigma^2 \hookrightarrow (M^{5},\xi)$ of Legendrian embeddings.
\end{definition}

Legendrian knots inside a contact $3$-manifold are the simplest example of Legendrian submanifolds. Indeed all the terms in the above definitions can be restated for Legendrian knots as well. In particular, two Legendrian knots are equivalent if they are isotopic via a family of Legendrian knots. Nullhomologous Legendrian knots of the same topological knot type can be distinguished by their Thurston Bennequin and rotation numbers, which are Legendrian isotopy invariants. Thurston-Bennequin invariant (see below for its definition) was originally defined by Bennequin \cite{B3}  and independently, Thurston when $n=1$, and generalized to higher dimensions by Tabachnikov \cite{T8}.\\

Let $L^n$ be an orientable connected nullhomologous Legendrian submanifold in a co-oriented contact $(2n+1)$-manifold $(M, \xi)$. Pick an orientation on $L$. Let $R$ be a Reeb vector field for $\xi$. Push $L$ slightly off of itself along $R$ to get another oriented submanifold $L'$ (a Legendrian copy of $L$ with the push-forward orientation) disjoint from $L$. The \textbf{\textit{Thurston-Bennequin invariant}} (\textbf{\textit{number}}) of $L$  is the linking number of $L$ and $L'$, that is, we have
\begin{center}
	$tb(L):=lk (L , L') \in\mathbb{Z}$
\end{center}
where $lk$ denotes the linking number. For the linking number, take any $(n+1)$-chain $C$ in $M$ such that $\partial C=L$. Then $lk (L , L')$ equals the algebraic intersection number of $C$ with $L'$. Intuitively, the Thurston-Bennequin invariant (number) of $L$ measures the twisting of $\xi$ around $L$. We note that $tb(L)$ is independent of the chosen orientation of $L$, and it is a Legendrian isotopy invariant in any odd dimension.\\

For a Legendrian knot $K$ in $(S^3,\xi_{st})$ (or equivalently in $(\mathbb{R}^3,\xi_{0}=\textrm{Ker}(dz+xdy))$), Thurston Bennequin number can be computed as follows: Pick an orientation on $K$. Then
\begin{center}
	$tb(K)=w(K)-\dfrac{1}{2}c(K)$
\end{center}
where $w(K)$ is the \emph{\textbf{writhe}} of $K$; i.e., the sum of the signs of the crossings of $K$ determined as in Figure \ref{fig:26}, and $c(K)$ is the number of \textit{cusps} in the \textbf{\textit{front projection}} of $K$ (the projection of $K$ onto the $yz$-plane). Here \emph{\textbf{cusps}} are the singular points in the front projection of $K$. (Note that $w(K)$ is independent of the chosen orientation of $K$.)

\begin{figure}[h]
	\centering	
	\includegraphics{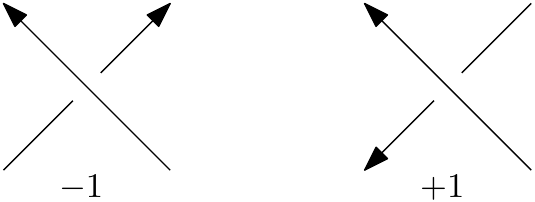}
	\caption{Positive and negative crossings.}
	\label{fig:26}
\end{figure}

\begin{definition}
	For an oriented Legendrian knot $K$ in $(\mathbb{R}^3,\xi_{0}=\textrm{Ker}(dz+xdy))$ (or equivalently in $(S^3,\xi_{st})$), its \textbf{\textit{rotation number}}  $rot(K)$ is defined as
	\begin{center}
		$rot(K)=\dfrac{1}{2}(D-U)$
	\end{center}
where $D$ (resp. $U$) denotes the number of down (resp. up) cusps in the front projection of $K$.
	\end{definition}
	
For a fixed topological knot type, different Legendrian representatives might have different Thurston Bennequin and rotation numbers. By adding more cusps, one can obtain new Legendrian representatives realizing any pregiven integer as a rotation number. However, although Thurston Bennequin number can be made arbitrarly small, it is not possible to increase forever. The following result (due to Bennequin and Eliashberg) provides an upper bound for the Thurston Bennequin number for nullhomologous Legendrian knots in tight contact $3$-manifolds. 

\begin{theorem} [\textbf{\textit{Bennequin inequality}}] \label{thm:tb_bound}
Let $K$ be a Legendrian knot in a tight $3$-manifold $(M,\xi)$ which bounds a surface $\Sigma \subset M$. Then
		$$tb(K)+ \mid rot(K) \mid \leq -\chi(\Sigma)$$
where $\chi(\Sigma)$ denotes the Euler characteristic of $\Sigma$. 
\end{theorem}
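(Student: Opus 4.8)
The plan is to run the classical argument of Bennequin and Eliashberg (see \cite{B3}), which analyzes the characteristic foliation of the surface $\Sigma$. First I would isotope $\Sigma$ relative to $\partial\Sigma=K$ so that the singular foliation $\Sigma_\xi$ cut on $\Sigma$ by $\xi$ (the line field $\xi\cap T\Sigma$ away from the points where $\xi=T\Sigma$) is generic in the sense of Giroux: finitely many singular points, each elliptic (Poincar\'e index $+1$) or hyperbolic (index $-1$), each carrying a sign $\pm$ according to whether the co-orientation of $\xi$ agrees with the orientation of $\Sigma$ there, no saddle--to--saddle connections, and every half-leaf limiting onto a singularity or onto $K$. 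Since $K$ is Legendrian we have $T_pK=\xi_p\cap T_p\Sigma$ for $p\in K$, so after one further $C^0$-small perturbation $K$ is itself a closed leaf of $\Sigma_\xi$ carrying no singularities. Write $e_\pm,h_\pm$ for the numbers of positive/negative elliptic and hyperbolic singularities in the interior of $\Sigma$. Since a line field tangent to $\partial\Sigma$ is involved, Poincar\'e--Hopf gives $\chi(\Sigma)=(e_++e_-)-(h_++h_-)$.

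Next I would invoke the classical ``Bennequin relations'' (see \cite{Ge}, \cite{Et}): both $tb(K)$ and $rot(K)$ are computed as signed counts of the singularities of $\Sigma_\xi$, so that $tb(K)\pm rot(K)$ equals $-\chi(\Sigma)$ corrected by an error term supported on the singularities of the ``wrong sign''. The substance of the inequality is that this error term can be made to vanish, and this is exactly where tightness enters. By Giroux's Elimination Lemma, an elliptic and a hyperbolic singularity of the same sign joined by a leaf of $\Sigma_\xi$ can be cancelled by a $C^0$-small isotopy of $\Sigma$; iterating this, and using the dynamical input that in a tight structure no limit cycle of $\Sigma_\xi$ bounds a disk containing a single singularity of the forbidden type (otherwise one would extract an overtwisted disk, contradicting tightness), one reduces $\Sigma_\xi$ to a normal form with no superfluous singularities. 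Substituting this normal form into the Bennequin relations and Poincar\'e--Hopf yields $tb(K)+rot(K)\le -\chi(\Sigma)$; replacing $K$ by $K$ with the reversed orientation if necessary (which changes neither $tb(K)$, by the orientation-independence noted above, nor $\chi(\Sigma)$) arranges $rot(K)\ge 0$ and gives the stated bound.

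The main obstacle, and the only place where the hypothesis that $(M,\xi)$ is tight is genuinely used, is the reduction to normal form: one must show the Elimination Lemma applies often enough to kill every singularity feeding the error term, and that the cancellation process is never obstructed. This is precisely what fails for an overtwisted $(M,\xi)$, where an overtwisted disk violates the inequality, so the argument cannot be purely formal Poincar\'e--Hopf bookkeeping and must use Eliashberg's structural analysis of characteristic foliations of surfaces in tight contact $3$-manifolds. An alternative route makes $\Sigma$ convex with Legendrian boundary and deduces the bound from Giroux's tightness criterion (no dividing curve of $\Sigma$ bounds a disk) together with the identities $tb(K)=-\tfrac12\,\#(K\cap\Gamma)$ and $rot(K)=\chi(\Sigma_+)-\chi(\Sigma_-)$ for the dividing set $\Gamma$ and the sign regions $\Sigma_\pm$; this needs the extra step of removing ``bypass half-disks'' (equivalently, destabilizing $K$ as far as tightness permits) to obtain the sharp constant, and is essentially equivalent to the foliation argument above.
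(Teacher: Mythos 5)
The paper does not prove this statement at all: it quotes the Bennequin inequality as a classical result of Bennequin and Eliashberg (with references such as \cite{B3}) and uses it later as a black box, so there is no in-paper argument to compare yours against. Judged on its own, your outline is the standard and correct Bennequin--Eliashberg proof: make the characteristic foliation $\Sigma_\xi$ generic with $K$ a nonsingular closed leaf, use Poincar\'e--Hopf to get $\chi(\Sigma)=(e_++e_-)-(h_++h_-)$, express $tb(K)\pm rot(K)$ as $-(e_\mp-h_\mp)$, and use Giroux elimination together with tightness to control the singularity counts; the convex-surface alternative you mention is also a legitimate route. One small imprecision: the error term is not made to \emph{vanish} but to become \emph{non-positive} --- the point is that tightness lets you cancel every negative elliptic point against a negative hyperbolic point (an uncancellable one would yield an overtwisted disk), so that $e_--h_-\leq 0$ and hence $tb(K)+rot(K)=-(e_--h_-)\leq (h_++h_-)-(e_++e_-)=-\chi(\Sigma)$, with the orientation-reversal trick upgrading $rot$ to $|rot|$; after cancellation one may well have $h_->0$, so the correction term is generally strictly negative rather than zero. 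With that adjustment your sketch is sound, though of course a complete proof would require filling in Eliashberg's structural analysis of characteristic foliations in tight contact manifolds, which is substantial.
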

 

\subsection{Handle decompositions of Stein surfaces} \label{subsec_Stein}

Let us first recall handlebodies. A copy of $D^{k}\times D^{n-k}$ that is attached to the boundary of an $n$-manifold along its \textbf{\textit{attaching region}} $\partial D^{k} \times D^{n-k}$ is called an $n$-\textbf{\textit{dimensional handle of index}} $k$ (or simply a $k$-\textbf{\textit{handle}}). Starting from a $0$-handle, a manifold obtained from attaching (finitely many) such $k$-handles ($k=0,1,...,n$)  is called a  \textbf{\textit{(smooth or topological) handlebody (of finite type)}}. For the smooth case, we glue each handle by a smooth embedding of its attaching region and smoothen the resulting corners. This construction results in a real-valued Morse function on the resulting manifold. Conversely, starting from a real-valued Morse function on a manifold $X$, one can obtain its handlebody description which is also referred to as a \textbf{\textit{handle decomposition}} of $X$. In the category of smooth $4$-manifolds, a handle decomposition of a manifold $X$ describes not only the topology but also a smooth structure on $X$. (The details can be found in \cite{GS}.)\\


The phrase ``a Stein surface'' will refer to a Stein domain of real dimension $4$. Pictures of handlebody diagrams of Stein surfaces (\textbf{\textit{Stein diagrams}} for short) were studied extensively by Gompf \cite{G8}. He gave description of $1$-handles in the setting of Stein surfaces and a standard form for Legendrian links in \#$nS^{1}\times S^{2}$($=$ Boundary of the $0$-handle $\cup$ $n$ $1$-handles). From this description, one can define and compute Thurston-Bennequin invariant as explained below.

\begin{definition} [\cite{G8}]
 A \textbf{\textit{Legendrian link diagram} } in the standard form, as in Figure \ref{fig:figure15}, is defined by the following way:\\
\noindent $\bullet$ $n$ 1-handles, showed by $n$ pairs of horizontal balls.\\
$\bullet $ A collection of $n$ horizontal distinguished segments coresponding to each pair of ball. \\
$\bullet$ A front projection of a generic \textbf{\textit{Legendrian tangle}} (i.e., disjoint union of Legendrian knots and arcs) with endpoints touching the segments.
\end{definition}

\begin{figure}[h!]
	\centering
	\includegraphics{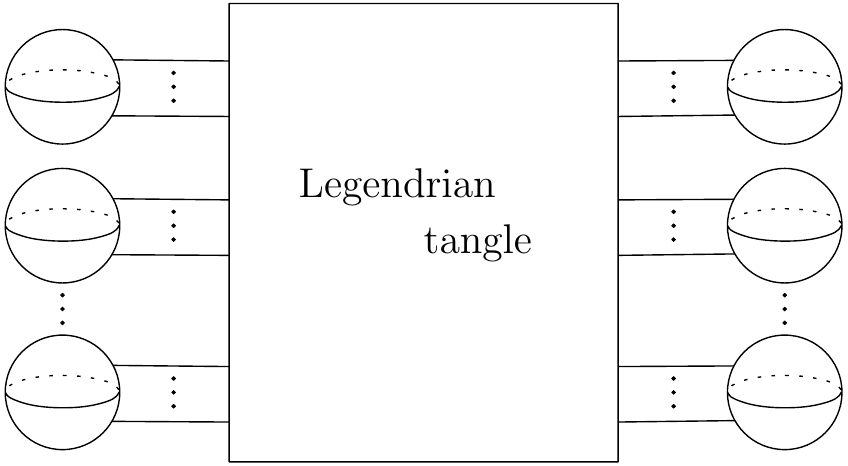}
	\caption{A Legendrian link diagram.}
	\label{fig:figure15}
\end{figure}

Similar to how it is defined for Legendrian knots in the standard contact three-space, the Thurston-Bennequin number of a Legendrian knot $K$ in a boundary of a Stein surface can be defined as
\begin{center}
	$tb(K)=w(K)-\dfrac{1}{2}c(K)$
\end{center}
with  the help of a Legendrian tangle (see \cite{G8}). The following result will be used later:

\begin{theorem} [\cite{E0}, \cite{G8}] \label{thm:Stein_handlebody_condition}
An oriented, compact, connected $4$-manifold $X$ is a Stein surface if and only if it has a handlebody diagram which formed by a Legendrian link diagram such that $2$-handles attached to link components $L_{i}$'s with framing $tb(L_{i})-1$.
\end{theorem}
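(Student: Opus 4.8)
The plan is to prove the two implications separately: the ``if'' direction as an explicit Weinstein handle construction, and the ``only if'' direction as a Morse-theoretic analysis of the underlying Weinstein structure of a Stein surface.

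\textbf{The ``if'' direction.} Suppose $X$ is built from a single $0$-handle, finitely many $1$-handles, and $2$-handles attached along the components $L_i$ of a Legendrian link presented in standard form, each with framing $tb(L_i)-1$. I would begin with the standard Stein structure on $D^4$ (coming from $\phi=|z_1|^2+|z_2|^2$ on $\C^2$), whose convex boundary is $(S^3,\xi_{st})$. Next I would attach the $1$-handles using Eliashberg's model for a subcritical Weinstein $1$-handle; the effect on the boundary is to produce $\#^{k}S^1\times S^2$ with its unique Stein fillable, hence (by Theorem \ref{thm:Stein_fillable_tight}) tight, contact structure, and Gompf's standard form guarantees that the Legendrian tangle may be assumed to lie in this contact $3$-manifold. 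The essential input is Eliashberg's theorem that a $4$-dimensional $2$-handle glued along a Legendrian knot $L_i$ in the contact boundary carries a Weinstein (critical) handle structure extending the one already built \emph{precisely when} the attaching framing differs from the contact (Thurston--Bennequin) framing by one left twist, i.e.\ equals $tb(L_i)-1$; this is where the numerical condition is forced, by the symplectic linear algebra of the standard handle model. Performing all of these $2$-handle attachments produces a Weinstein domain whose underlying smooth manifold is $X$, and Theorem \ref{thm:Weinstein_implies_Stein} then deforms it to an honest Stein structure.

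\textbf{The ``only if'' direction.} Conversely, let $X$ be a Stein surface, and let $(\omega_\phi,Z,\phi)$ be the underlying Weinstein structure supplied by its definition. Applying Theorem \ref{thm:Top_Charac_of_Weinstein_manifolds} with $n=2$, I would first arrange, after a Weinstein homotopy, that $\phi$ is a self-indexing Morse function with critical points only of indices $0$, $1$, and $2$; since $X$ is connected one further reduces to a single index-$0$ critical point, giving the $0$-handle. The sublevel set just beneath the index-$2$ critical level is then a Weinstein domain assembled from the $0$- and $1$-handles, so its convex boundary is $\#^{k}S^1\times S^2$ with the standard contact structure, and Gompf's normalization isotopes each attaching circle into standard form there. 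Each index-$2$ handle is glued along the descending sphere $S^1$ of the corresponding critical point, and the Weinstein condition forces this circle to be isotropic, hence Legendrian, in the contact $3$-manifold. Finally, a direct computation in the standard Weinstein $2$-handle model identifies the gluing framing as the contact framing decreased by one, that is, $tb(L_i)-1$. Reading off the resulting handle decomposition yields the claimed Legendrian link diagram.

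\textbf{Where the difficulty lies.} The technical heart, and the step I expect to be the main obstacle, is Eliashberg's analytic result invoked in the ``if'' direction: that the complex (equivalently symplectic) structure genuinely extends across a $2$-handle attached along a Legendrian knot exactly under the framing condition $tb-1$, together with the Morse-theoretic bookkeeping needed in the converse (achieving a self-indexing Weinstein Morse function, performing handle slides and cancellations compatibly with the Weinstein structure, and Gompf's reduction to standard form). Both are established in \cite{E0} and \cite{G8}, and I would cite them rather than reproduce the handle-attachment computation.
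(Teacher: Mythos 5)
The paper gives no proof of this statement at all---it is imported verbatim as background from \cite{E0} and \cite{G8}---so there is nothing internal to compare your argument against. Your two-directional outline (Weinstein $1$- and $2$-handle attachment with the $tb(L_i)-1$ framing forced by the standard handle model for the ``if'' direction, and the index $\leq 2$ Morse-theoretic analysis of the plurisubharmonic function plus Gompf's standard-form normalization for the ``only if'' direction) is a faithful and correctly structured sketch of exactly the Eliashberg--Gompf argument those citations refer to, with the genuinely hard analytic steps appropriately delegated to the same references the paper cites.
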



\section{Relative Page Crossing Number} \label{sec:Relative_Page_Crossing}

Let us start with showing that one can always find an open book whose binding intersects a given Legendrian surface transversely. For similar arguments, we refer the reader to \cite{AF19}.

\begin{lemma}[\textit{Spider Lemma}] \label{lem:spider}
	Let $(M, \xi)$ be a closed contact $5$-manifold and $(B, f)$ an open book on $M$ supporting $\xi$ with Weinstein pages. Also let $L$ be a closed orientable Legendrian surface in $M$. Then there exists an isotopy $(B_t, f_t), t\in[0,1]$ of open books all of which supporting $\xi$ such that $(B_0, f_0)=(B,f)$, $(B_1, f_1)=(B',f')$, and $L$ intersects $B'$ transversely.
\end{lemma}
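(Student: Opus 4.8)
The plan is to perturb the binding $B$, keeping it a binding of a supporting open book, until it meets $L$ transversely; the mechanism is a controlled Legendrian/contact isotopy supported near the points of bad intersection, exactly the kind of "spider move" used in \cite{AF19}. Note first that since $L$ is $2$-dimensional and $B$ is $3$-dimensional inside the $5$-manifold $M$, generic transversality would give $\dim(L\cap B)=0$; so the statement really asserts that we can achieve transversality by a \emph{contact} isotopy that simultaneously moves the whole open book structure, not just an ambient smooth isotopy. The key observation is that an isotopy of open books supporting $\xi$ can be generated by an isotopy of $M$ arising from the flow of a contact vector field; conversely, any contactomorphism isotopic to the identity carries $(B,f)$ to another open book supporting $\xi$. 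So it suffices to produce a contact isotopy $\psi_t$ of $M$ with $\psi_0=\mathrm{id}$ and $\psi_1(B)\pitchfork L$, and then set $(B_t,f_t)=(\psi_t(B), f\circ\psi_t^{-1})$.

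Here is how I would carry it out. First, by compactness $L\cap B$ is a compact set; after a preliminary small ambient isotopy (which can be taken to be a contact isotopy, by Gray stability / the fact that any $C^\infty$-small isotopy of $B$ is realized by a contact isotopy of $M$ adjusting the open book) we may assume $L\cap B$ consists of finitely many points $p_1,\dots,p_k$ together with, possibly, some pieces where $L$ is tangent to $B$ along curves or chunks. Second, near each such bad locus choose a Darboux ball $U_j\subset M$ (Darboux's theorem) with coordinates in which $B$, being codimension-two with trivial normal bundle and with the disk-angle coordinate matching $f$, looks like $\{x_1=y_1=0\}$ (or a standard model for the binding of an open book), and $L$ is some Legendrian surface passing through the origin. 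Third, inside $U_j$ we perform a local model computation: we exhibit an explicit compactly supported contact vector field $Z_j$ whose time-one flow pushes $L$ off of $B$, or rather makes the intersection transverse; this is the "spider" — one moves $L$ locally along a direction transverse to $B$ within the contact structure, using that $\xi$ together with the Reeb direction spans $TM$, so a suitable multiple of a Reeb-like push works. Summing the $Z_j$ (they have disjoint supports) gives a global contact vector field $Z=\sum_j Z_j$, and its flow $\psi_t$ is the desired contact isotopy; transversality at $t=1$ follows because it has been arranged in each local model and nothing was disturbed away from the $U_j$'s. Finally, since each $\psi_t$ is a contactomorphism isotopic to the identity, $(B_t,f_t):=(\psi_t(B), f\circ\psi_t^{-1})$ is an isotopy of open books all supporting $\xi$, with $(B_0,f_0)=(B,f)$ and $B_1=\psi_1(B)$ transverse to $L$; the Weinstein condition on pages is preserved because $\psi_t$ is a contactomorphism carrying $(X, d\alpha|_X)$ symplectomorphically onto the new page.

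The main obstacle I expect is Step three: getting the \emph{local normal form} right and checking that the local push can be realized by a \emph{contact} (not merely smooth) isotopy while genuinely achieving transversality rather than just disjointness. One has to be careful that the Legendrian surface $L$ near a bad point can be tangent to $B$ to high order, and a single push along one transverse direction may not suffice — one may need to iterate, or to use a parametrized Sard/Thom transversality argument inside the space of compactly supported contact Hamiltonians (which is possible because contact Hamiltonians are in bijection with functions on $M$, so there are "enough" of them to achieve genericity). A clean way to organize this is: the space of contact isotopies of $U_j$ fixing the boundary is parametrized by contact Hamiltonians $H_t\in C^\infty_c(U_j\times[0,1])$, the evaluation/jet map sending $H$ to the $1$-jet of $\psi_1|_L$ along $B$ is a submersion onto the relevant jet space, hence a generic $H$ gives $\psi_1(L)\pitchfork B$; patch the generic local choices together. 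The rest of the argument is essentially bookkeeping about open books and Gray stability, which the paper can cite.
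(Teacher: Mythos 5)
Your proposal is correct and follows essentially the same route as the paper: the key point in both is that the perturbation is realized by the flow of a compactly supported contact vector field, so every intermediate $(B_t,f_t)$ still satisfies the compatibility conditions, and transversality comes from genericity of the nearby copy. The paper simply takes a generic parallel copy $B'$ of $B$ inside a tubular neighborhood $B\times D^2$ and asserts the genericity, whereas your Darboux-ball localization and parametric transversality over contact Hamiltonians is a more detailed implementation of that same step.
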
 

\begin{proof}
	If $L$ and $B$ transversely intersect, then there is nothing to prove. If they don't intersect transversely, then consider a neighborhood of $B$ in $M$ which can be identified with $B\times D^2$. Nearby generic $B' \subset B\times D^2$ (which is a copy of $B$) intersects $L$ transversely. Then we can isotope $B$ to $B'$ (and accordingly the pages of the open book $(B,f)$) using the flow of an appropriate contact vector field compactly supported in $B\times D^2$. (See Figure \ref{fig:30}.) So, we obtain a family of open books $\{(B_t, f_t)\}$ for $M$ such that $(B_0, f_0)=(B,f)$, $(B_1, f_1)=(B',f')$. Finally, we note that at any time $t\in[0,1]$ compatibility conditions in Definition \ref{def:compatibility} are satisfied by the open book $(B_t, f_t)$ since the isotopy is based on a contact vector field.
\end{proof}

\begin{figure}[h!] 
		\centering	\includegraphics[width=.9\textwidth,height=.3\textheight]{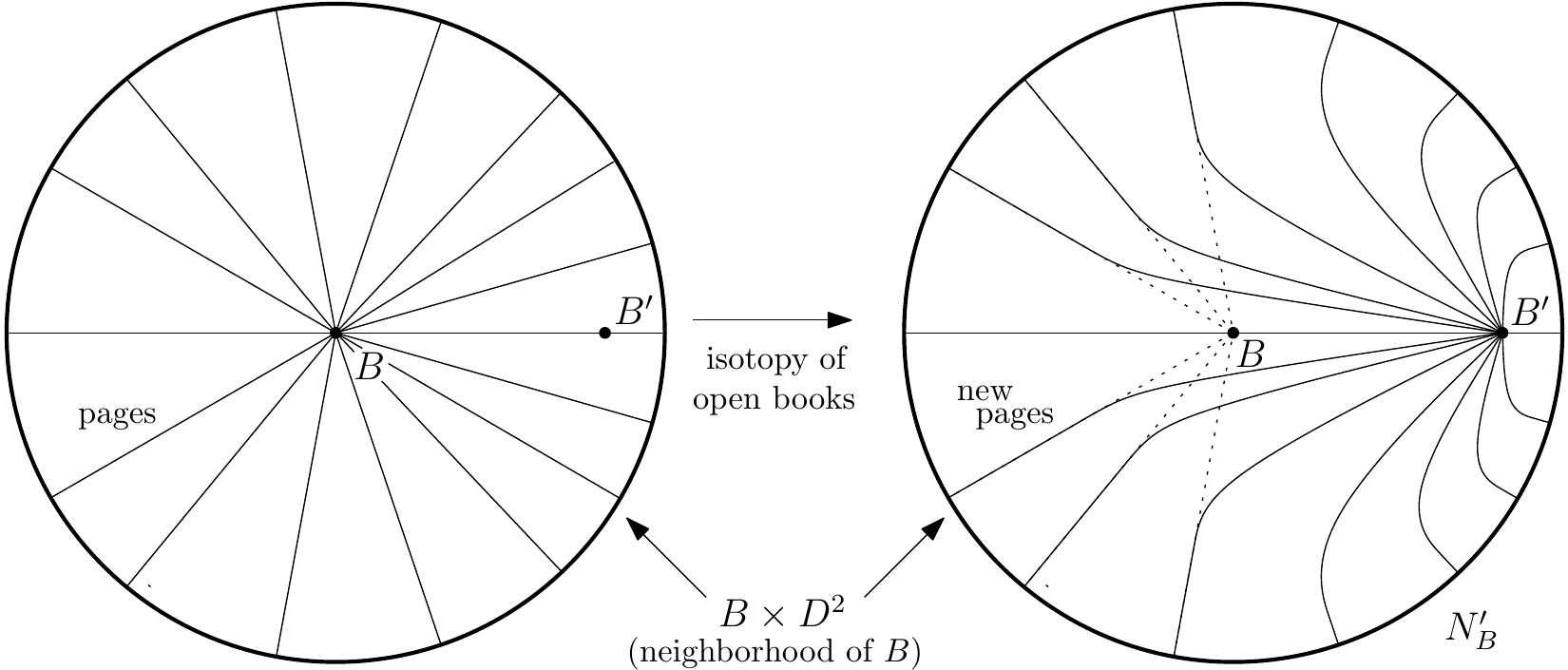}
		\caption{ Isotoping a given open book so that the new binding $B'$ intersects $L$ transversally. }
		\label{fig:30}
	\end{figure}

Assuming Spider Lemma have been already applied, we may start with a supporting open book $(B, f)$ such that $L\pitchfork B$ where $L$ is a given closed, compact, orientable Legendrian surface in a closed contact $5$-manifold $(M,\xi=\textrm{Ker}(\alpha))$. By Theorem \ref{thm:existence_OB}, we may assume $(B, f)$ has Stein pages. Since they intersect transversely, we have $dim(L\pitchfork B)=0$, and so they intersect along a finite number of points. (Later we will be interested in their minimal geometric intersection.) Take any orientation on the Legendrian surface $L$. If the orientations of $L$ and $B$ are consistent at a transverse intersection point, then mark the point with plus ($+$), otherwise mark the point with minus ($-$). Continue this procedure until all the intersection points have been labelled. 
\begin{remark}
	$\bullet$ Since we assume an open book structure, $M$ must be closed.\\
	$\bullet$ Since $L$ and $B$ are compact, their intersection consists of finitely many points. (Note $B$ is compact as being the binding of an open book structure.)\\
	$\bullet$ Homology intersection of $L$ and $B$ is trivial. This is because $B$ is the boundary of a page (indeed every page) of an open book, and so $B$ has zero homology class.
\end{remark}
From the above remark, the intersection of $L$ and $B$ consists of even number of points: The number of plus points is equal to the number of minus points because $L$ and $B$ have trivial homology intersection.

Consider the pages $X_0=f^{-1}(\theta), X_1=f^{-1}(\theta+\pi)$ for $\theta \in S^1$. By genericity, we may assume $L$ transversally intersects $X_0, X_1$ and their common boundary $B$.
Let $D(X)=X_{0}\cup_{\partial} X_{1}$ denote the double of the page $X$, i.e., the union of the pages $X_{0}$ and $X_{1}$ (Here $X_{0}\cong X \cong X_{1}$, and $X_{0}$, $X_{1}$ are dual pages of each other.) Clearly, $D(X)$ is a closed folded symplectic manifold and decomposed into two Stein domains $(X_{0}, d\alpha_{0})$ and $(X_{1}, d\alpha_{1})$ where $\alpha_i=\alpha|_{X_i}$. Note that these Stein pieces induce opposite orientations on the fold $B\subset D(X)$. Consider the handle decompositions as in Figure \ref{fig:31}. 

\begin{figure}[h]
	\centering	
	\includegraphics[scale=.9]{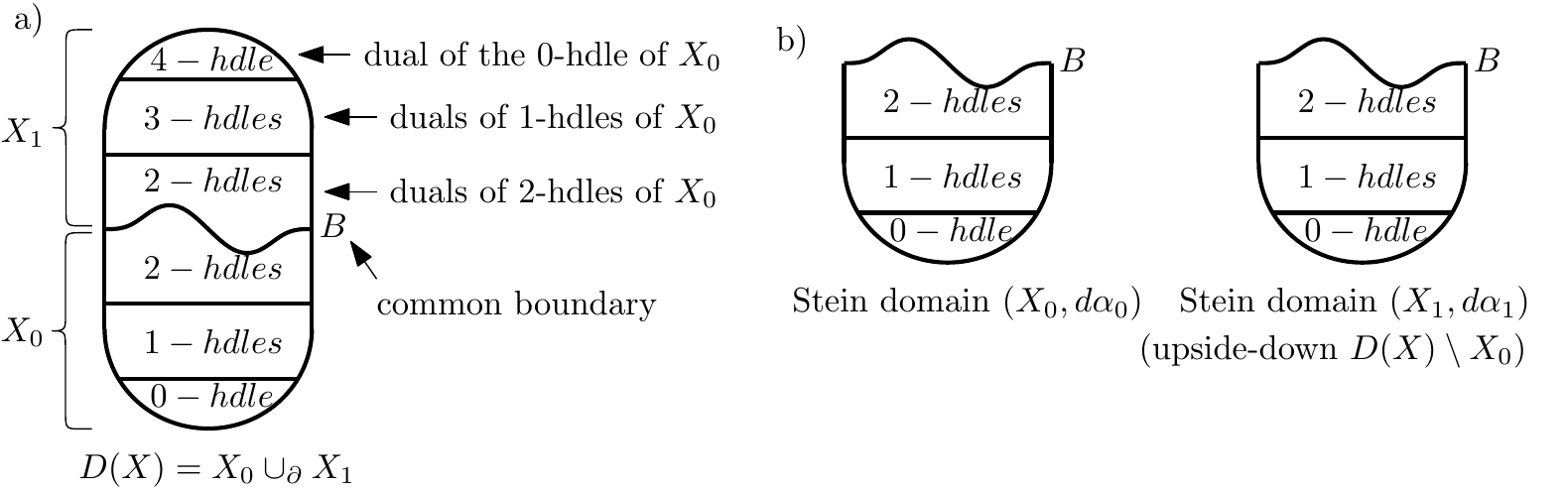}
	\caption{a) Handle decomposition of the double $D(X)$, union of the pages $X_{0}$ and $X_{1}$.  b) The Stein domains  $(X_{0}, d\alpha_{0})$ and $(X_{1}, d\alpha_{1})$. }
	\label{fig:31}
\end{figure}

Note that $dim(L)=2$ and $dim(X)=4$, so $dim(L\pitchfork D(X))=1$. That is, $L$ and $D(X)$ intersect along embedded curves due to the generic choice of $D(X)$. Let's first assume, for simplicity, that $L\pitchfork D(X)$ consists of a single  curve $K$. Also let $L\cap X_{0}=k_{0}$, $L\cap X_{1}=k_{1}$. That is, $K=k_{0}\cup k_{1}$. On $k_{0}$, take the orientation from $(+)$-points to the $(-)$-points in $L \cap B$. Similarly, on $k_{1}$, take the orientation from $(-)$-points to the $(+)$-points. See Figure \ref{fig:32}.

\begin{figure}[h]
	\centering
		\includegraphics[scale=.8]{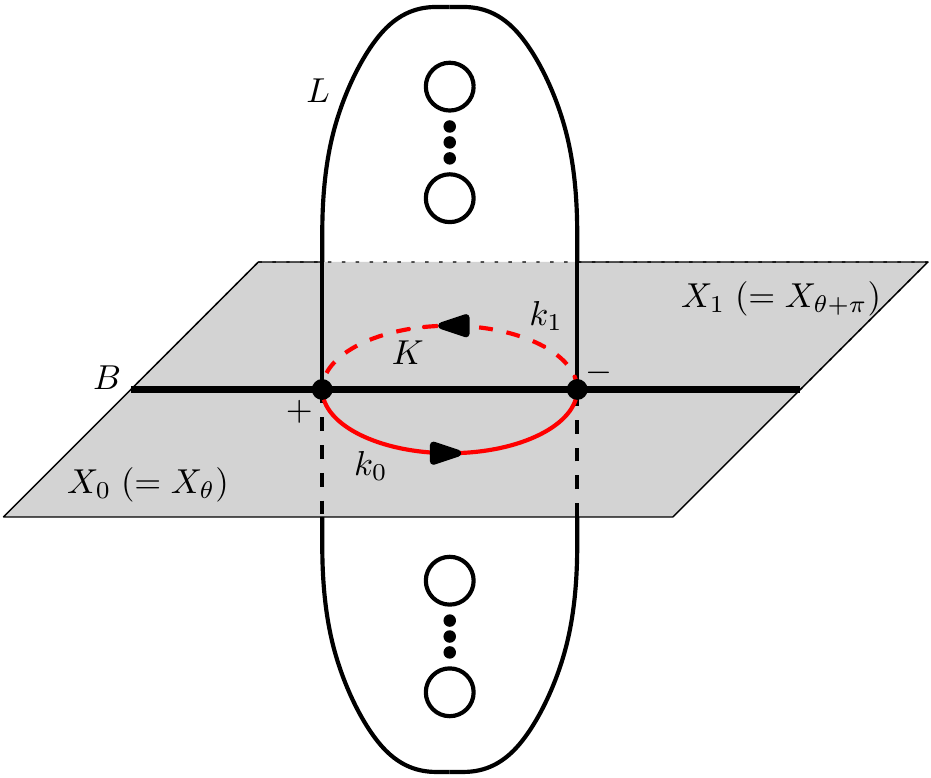}
	\caption{ Embedded Legendrian surface $L$ intersecting transversally the binding $B$ and a pair of pages $X_{0}$ and $X_{1}$. A typical knot component $K=k_{0}\cup k_{1}$ of the link of intersection of $L$ with the double $D(X)=X_{0}\cup_{\partial} X_{1}$. }
	\label{fig:32}
\end{figure}

Sketch the Legendrian arcs for $k_{0}$ and $k_{1}$ in the Stein diagrams of $X_0$ and $X_1$, respectively, and calculate Thurston-Bennequin numbers of these arcs. Summing these two numbers will give us an integer, which we'll denote by $\widetilde{tb}(K)$. In other words, we define
\begin{center}
	$\widetilde{tb}(K):= tb(k_{0})+tb(k_{1})$.
\end{center}

In the general case, the intersection of $L$ and $D(X)$ may consist of finite number of closed curves (embedded knots in $D(X)$), say $K^{1}$, $K^{2}$, ..., $K^{r}$. (Note that $K^i$'s are disjoint by transversality theorem, and so their union is a link in $D(X)$.) That is, we have
\begin{center}
	$L\pitchfork D(X)=\displaystyle{\bigsqcup_{i=1}^{r}K^{i}}$.
\end{center}
Again one can sketch the Legendrian arcs constructing the knot components of the link of the intersection of $L$ with the double $D(X)$ in the Stein diagrams of $X_0$ and $X_1$, and therefore, we obtain a diagram in Figure \ref{fig:33} describing the transverse intersection $L\pitchfork D(X)$.

\begin{figure}[h!]
	\centering
	\includegraphics[scale=.8]{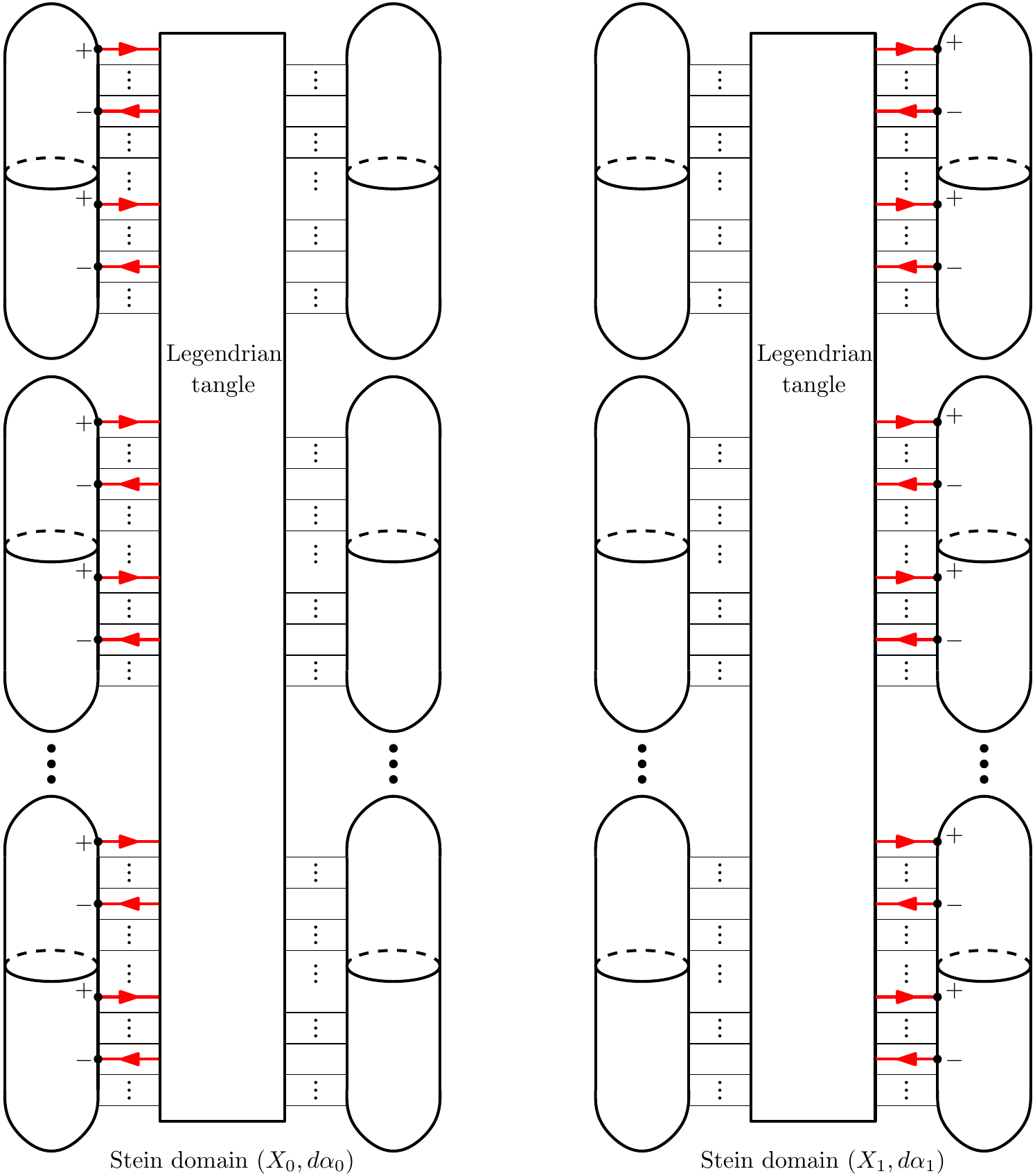}
	\caption{ Legendrian arcs (in red) constructing the knot components $K^{1}$, $K^{2}$, ..., $K^{r}$ of the link of intersection of $L$ with the double $D(X)=X_{0}\cup_{\partial} X_{1}$. }
	\label{fig:33}
\end{figure}

For all knot components $K^{i}=k_{i}^0\cup k_{i}^1$, we calculate $\widetilde{tb}(K^{i})$ as above. Summing all these together and taking the maximum of such sums by changing $L$ in its Legendrian isotopy class, one can define a number. First we need some preliminary definitions:
\begin{definition}	 \label{def:Page_crossing}
	Let $L\hookrightarrow (M^5, \xi)$ be a closed orientable Legendrian surface. Fix an admissable open book $(B, f)$ for $L$. Consider
	\begin{center}
				$[L]=\{L'\subset (M, \xi) \mid L'$ is Legendrian isotopic to $L\}$
	\end{center}
This class is called the \textbf{\textit{Legendrian isotopy class}} of $L$. Fix a page $X$ of the open book $(B, f)$, and $L'$ which is Legendrian isotopic to $L$ and transversely intersecting the double $D(X)$. Then the \textbf{\textit{page crossing number of} $L'$ \textit{with respect to} $X$} is defined as
\begin{center}
	$\mathcal{P}_{X}(L'):=\displaystyle \sum_{i=1}^{r}\widetilde{tb}(K_{i})$.
\end{center}
Lastly, we say that the double $D(X)$ \textbf{\textit{essentially intersects}} $L$ if we have $$L' \cap D(X)\neq \emptyset, \quad \forall L' \in [L].$$
\end{definition}

 We are ready to define our first invariant:
 
\begin{definition}
		Let $L\hookrightarrow (M^5, \xi)$ be a closed orientable Legendrian surface. Fix an admissable open book $(B, f)$ for $L$ and a page $X$ of $(B, f)$ such that $D(X)$ essentially intersects $L$. Then
	\begin{center}
		$M\mathcal{P}_{X}(L):=Max \left\{\mathcal{P}_{X}(L') \mid L' \in [L] \;\textrm{ and }\; L' \pitchfork D(X) \right\}$
	\end{center}
is called the \textbf{\textit{relative maximal page crossing number of $L$ with respect to $X$}}.
\end{definition}

Well-definedness of  $M\mathcal{P}_{X}(L)$ will be discussed in Section \ref{sec:well-definedness_isotopies}. Until then, $M\mathcal{P}_{X}(L)$ will be assumed to be well-defined. The following facts indicate that the most practicle way of computing $M\mathcal{P}_{X}(L)$ is working in the case of geometrically minimal intersection.

\begin{lemma} \label{lem:arc_gamma}
	Let $K=k_0 \cup k_1$ be a component of the link of transverse intersection of $L$ with the double $D(X)=X_{0}\cup_{\partial} X_{1}$  constructed using the minimal geometric intersection points of $L$ and $B$. Suppose $\gamma$ is an arc on the attaching sphere $S$ of the $1$-handle of $X_{i}$ connecting the boundary points $\partial k_i$. Then the circle $k_i \cup \gamma$ can not be a homotopically trivial in $X_i$ for each $i=0,1$.
\end{lemma}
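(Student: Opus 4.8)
The plan is to argue by contradiction: suppose the circle $c_i := k_i \cup \gamma$ bounds a disk in $X_i$, and use this to reduce the geometric intersection number of $L$ with $B$, contradicting minimality. First I would set up the picture carefully. The arc $k_i \subset X_i$ has its two endpoints $\partial k_i$ on the binding $B = \partial X_i$, and in fact on the attaching sphere $S$ of the (unique) index-$1$ handle of the Stein page $X_i$ (here we are using that $X$ is simply-connected and Weinstein, so by Theorem \ref{thm:Top_Charac_of_Weinstein_manifolds} its handle decomposition has handles of index $\le 2$; after cancelling and since $\pi_1 = 0$ one may arrange the $1$-handle situation relevant to the lemma). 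The arc $\gamma$ lies on $S$ and joins the same two endpoints, so $c_i = k_i \cup \gamma$ is an embedded circle in $X_i$. The two endpoints $\partial k_i$ are precisely two of the transverse intersection points of $L$ with $B$, one labelled $(+)$ and one labelled $(-)$ (by the orientation convention on $k_0$ running from $(+)$ to $(-)$, and on $k_1$ from $(-)$ to $(+)$, each component $K$ contributes one point of each sign to $L \cap B$).

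Next, assume for contradiction that $c_i$ is homotopically trivial in $X_i$; since $X_i$ is simply connected this is the same as being null-homotopic, and because $\dim X_i = 4 > 2\cdot 1$, a generic null-homotopy of the embedded circle $c_i$ can be promoted to an embedded disk $\Delta \subset X_i$ with $\partial \Delta = c_i$ (Whitney's embedding/immersion arguments in dimension $4$ for a $2$-disk bounded by an embedded circle; alternatively just use that $c_i$ is null-homotopic, hence null-homologous, and cap it off). Now push $\Delta$, rel a neighborhood of $\gamma \subset S \subset B$, slightly into the interior so that its interior is disjoint from $B$; the boundary arc $k_i$ of $c_i$ was already in $\mathrm{int}(X_i)$ except at the two endpoints on $B$. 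Using $\Delta$ as a guide, I would perform a Legendrian isotopy of $L$ supported in a neighborhood of $\Delta$ (an isotropic/Legendrian analogue of a finger move / Whitney trick): the arc $k_i = L \cap X_i$ near $K$ gets pulled across $\Delta$ and off of $X_i$, and in the process the two endpoints $\partial k_i$ — an oppositely-signed pair of points of $L \cap B$ — get cancelled. This produces a Legendrian surface $L'$ isotopic to $L$ with $|L' \cap B| = |L \cap B| - 2$, contradicting the assumption that $K$ was constructed from the \emph{minimal} geometric intersection of $L$ and $B$.

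The main obstacle, and the step that needs the most care, is making the cancellation move \emph{Legendrian} rather than merely smooth: one must check that the finger/Whitney move sweeping $L$ across the disk $\Delta$ can be realized through Legendrian embeddings. The key points are that $L$ is isotropic (so $TL \subset \xi$), that the move is supported in a Darboux ball (Theorem, Darboux) neighborhood of a small piece of $\Delta$, and that in the standard model $(\R^5,\xi_0)$ one has enough flexibility — via the $h$-principle for isotropic embeddings in the complement of the relevant obstruction, or simply by an explicit local front-projection model — to slide the Legendrian arc across a disk while keeping it Legendrian, exactly as one cancels a pair of oppositely-oriented intersection points of a Legendrian knot with a surface in the $3$-dimensional case. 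I would also need to confirm that $\Delta$ can be taken disjoint from $L$ away from the controlled region (again a dimension count: $\dim L + \dim \Delta = 4 < 5$, so generically $L \cap \mathrm{int}(\Delta) = \emptyset$) and disjoint from the other components $K^j$, $j \neq i$, of $L \cap D(X)$. Once these transversality and Legendrian-flexibility points are in place, the contradiction with minimality closes the argument, and the same reasoning applies verbatim to $i = 0$ and $i = 1$.
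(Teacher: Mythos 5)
Your proposal is correct and follows essentially the same route as the paper: assume the circle $k_i\cup\gamma$ bounds a disk in $X_i$, use a Legendrian isotopy of $L$ supported in a Darboux ball neighborhood of that disk to slide $k_i$ off the page and cancel the oppositely-signed pair $\partial k_i$ in $L\cap B$, contradicting minimality of the geometric intersection. The paper compresses the realizability of this move into ``the flow of a suitable contact vector field compactly supported in a Darboux ball,'' whereas you spell out the embedded-disk and Legendrian-flexibility points explicitly; the underlying argument is the same.
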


\begin{figure}[h!]
	\centering	\includegraphics[scale=.95]{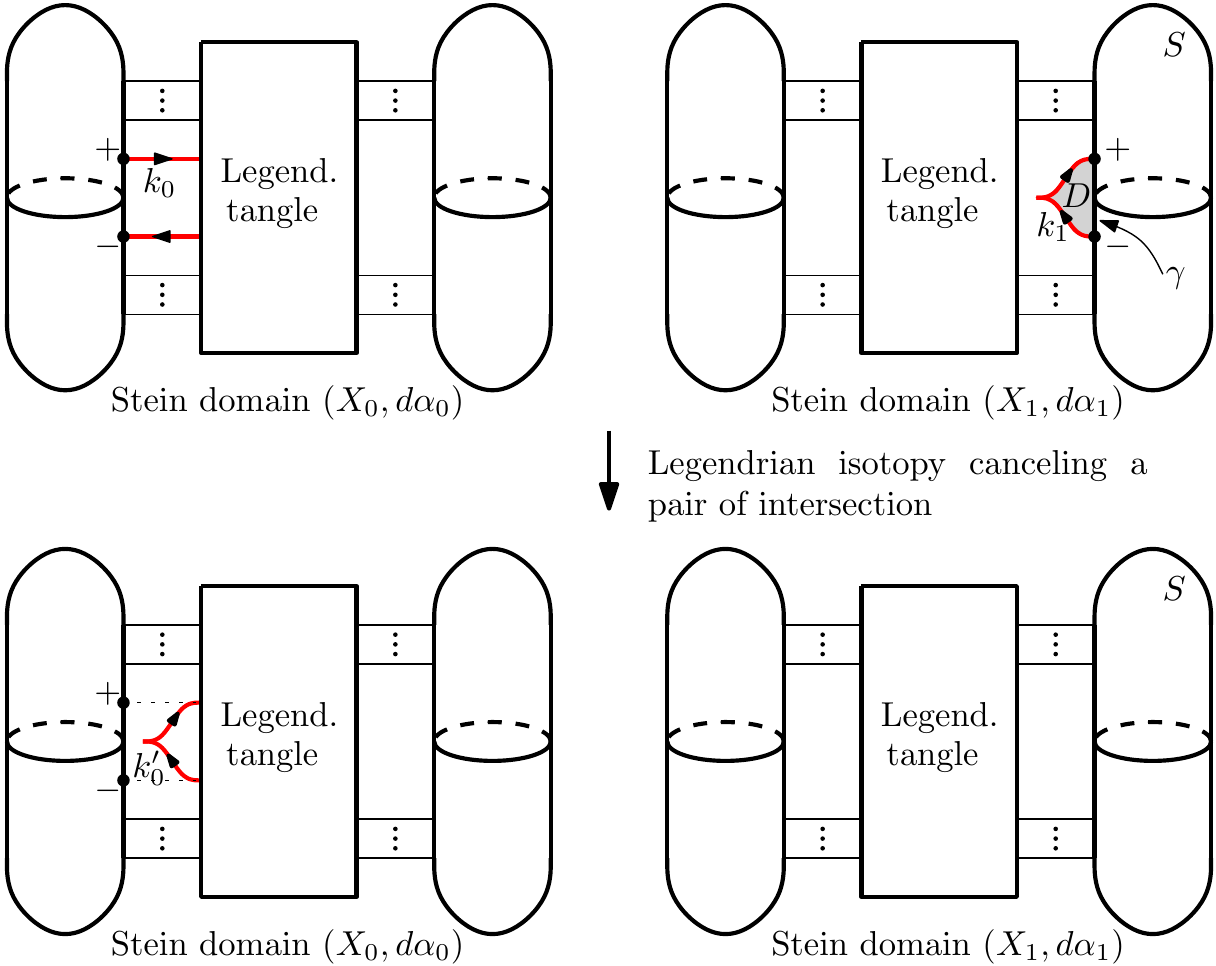}
	\vspace{0.5cm}
	\caption{ Realizing a Legendrian isotopy of $L$ (cancelling a pair of intersection points in $L\cap B$) by isotoping $L$ (through the disk $D \subset X_{1}$ enclosed by $k_{1}$ and the path $\gamma$ on the attaching sphere $S$ of the $1$-handle of $X_{1}$ joining the points ``$+$" and ``$-$") in the Stein diagrams $(X_{0}, d\alpha_{0})$ and $(X_{1}, d\alpha_{1})$. }
	\label{fig:34}
\end{figure}

\begin{proof}
	Take $i=1$ (the case $i=0$ is similar). The statement of the lemma is equivalent to say that $k_i \cup \gamma$ can not bound a disc in $X_1$. Suppose there exists such a disk $D \subset X_i$. Then using the flow of a suitable contact vector field (compactly supported in a neighborhood of $D$ in $M$ which is indeed some Darboux ball $\mathbb{D}^5$), we can Legendrian isotope $L$ until the arc $k_1$ disappears (i.e., the whole $k_1$ is transformed into $X_0$). This means that the $(\pm)$-intersection points corresponding to $\partial k_1$ is a canceling pair. Since in the new Stein pictures, there would be a less number of intersection points in $L \cap B$, this contradicts to minimality. (See Figure \ref{fig:34}.)
\end{proof}

\begin{remark}
In Lemma \ref{lem:arc_gamma}, the path $\gamma$ is chosen away from the points where other knots and arcs meet with $S$. Also in Figure \ref{fig:34}, for simplicity, $k_{1}$ is drawn with a single left cusp, but more number of cusps are also possible and threated in the same way as long as the disk $D$ exists. When we move $k_1$, this cusp (and hence the pair of intersection points ``$+$" and ``$-$") will disappear. Note that after such a canceling a pair of intersection, $\widetilde{tb}$ doesn't change, i.e.,
\begin{center}
	$tb(k_{0})+tb(k_{1})=tb(k_{0}')+tb(k_{1}')=tb(k_{0}')$ (or $=tb(k_{1}')$ in the case $i=0$).
\end{center}
\end{remark}

\begin{lemma} \label{lem:tb_negative}
		Let $K=k_0 \cup k_1$ be a component of the link of transverse intersection of $L$ with the double $D(X)$ constructed using (not necessarily minimal) geometric intersection points of $L$ and $B$. Suppose $K$ is homotopically trivial in $D(X)$. Then
			$\;\widetilde{tb}(K)= tb(k_{0})+tb(k_{1}) \leq -1$.
\end{lemma}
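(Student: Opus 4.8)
The plan is to reduce the inequality to a slice-type (adjunction) estimate applied inside each of the two Stein pages $X_{0}$, $X_{1}$ --- the four-dimensional analogue of the Bennequin inequality (Theorem \ref{thm:tb_bound}), but now invoked inside the fillings. By Theorem \ref{thm:existence_OB} we may assume $X_{0}\cong X\cong X_{1}$ is Stein, and its convex boundary $(B,\xi|_{B})$ is tight by Theorem \ref{thm:Stein_fillable_tight}.

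The first step is to pass from the disk that $K$ trivializes in $D(X)$ to disks in the two pages. Since $K=k_{0}\cup k_{1}$ is homotopically trivial in $D(X)$, it bounds an embedded disk $\Delta\subset D(X)$ (this is the reading of ``homotopically trivial'' used in the proof of Lemma \ref{lem:arc_gamma}). Put $\Delta$ in general position with respect to the fold $B$, so that $\Delta\cap B$ is a properly embedded $1$-manifold in $\Delta$ whose only boundary points are $K\cap B=\partial k_{0}=\partial k_{1}$. Successively removing the innermost circles of $\Delta\cap B$ that bound disks in $B$ (swallowing the tongue along such a disk), I reduce to the case $\Delta\cap B=\delta$, a single arc joining the two intersection points. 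The fold then cuts $\Delta$ into two disks $\Delta_{i}=\Delta\cap X_{i}$ with $\partial\Delta_{0}=k_{0}\cup\overline{\delta}$ and $\partial\Delta_{1}=k_{1}\cup\delta$. Replacing $\delta$ by a $C^{0}$-close Legendrian arc $\gamma$ in $(B,\xi|_{B})$ isotopic to it rel endpoints, and carrying the $\Delta_{i}$ along, I obtain closed Legendrian curves $\widehat{k}_{i}:=k_{i}\cup\gamma\subset X_{i}$ that each bound an embedded disk in the Stein domain $X_{i}$.

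The second step is the adjunction input: by the slice--Bennequin (adjunction) inequality for Stein fillings, a Legendrian knot bounding an embedded disk in a Stein $4$-manifold has Thurston--Bennequin number at most $-1$, so $tb(\widehat{k}_{0})\le-1$ and $tb(\widehat{k}_{1})\le-1$. The third step is to translate this back to the original arcs: closing $k_{i}$ up by $\gamma$ alters $tb$ only by the writhe and cusp contributions of $\gamma$ in the respective Stein diagram, and a short computation with these contributions --- keeping in mind that the two diagrams present $(B,\xi|_{B})$ with \emph{opposite} orientations --- yields $\widetilde{tb}(K)=tb(k_{0})+tb(k_{1})\le -1$. If some circles of $\Delta\cap B$ are essential in $B$, the $\Delta_{i}$ are planar rather than disks; capping their extra boundary circles by the corresponding innermost subdisks of $\Delta$ and using the general adjunction bound $tb\le-\chi$ only strengthens the estimate, so the conclusion persists.

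The step I expect to be the main obstacle is the third one. Since $k_{0}$ and $k_{1}$ are read in the two Stein diagrams --- which present $(B,\xi|_{B})$ with opposite orientations and come from dual handle decompositions of $D(X)$ --- one must track carefully how the single arc $\gamma$ and the junction points $\partial k_{i}$ appear in each diagram, and be precise about the sign conventions entering $tb$ of an arc, in order to see that the correction to the bound is exactly what makes the two ``$-1$''s combine into the asserted $-1$. The auxiliary points --- the innermost-circle reduction of the first step, and the check that essential components of $\Delta\cap B$ never weaken the estimate --- are routine surgery arguments but need to be spelled out.
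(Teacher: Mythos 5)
Your first step (splitting the nullhomotopy along the fold and arranging $\Delta\cap B$ to be a single arc, then Legendrianizing it to $\gamma$) is a reasonable, more explicit version of what the paper does implicitly, and your second step is sound as far as it goes. The genuine gap is exactly the step you flag as the main obstacle: the third one. Applying the slice--Bennequin bound once in each page gives $tb(\widehat{k}_0)+tb(\widehat{k}_1)\le -2$, and writing $tb(\widehat{k}_i)=tb(k_i)+c_i$ for the correction contributed by $\gamma$ in the $i$-th diagram, what you need is $c_0+c_1\ge -1$; nothing in your argument controls the sign or size of $c_0+c_1$, and the ``short computation'' that is supposed to make the two $-1$'s collapse into a single $-1$ is never performed. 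This is not a bookkeeping formality: any closure of a Legendrian arc into a Legendrian knot costs cusps, the same arc $\gamma$ is charged twice (once in each diagram), and the two diagrams induce opposite orientations on $B$, so the honest estimate could just as well come out as $\le 0$. Since the whole content of the lemma is the strict bound $\le -1$ (it is what drives Proposition \ref{prop:irregular_isotopy} and the well-definedness argument), this step cannot be left as an expectation. A secondary, heavier issue is that you invoke the four-dimensional adjunction inequality for Stein domains, a gauge-theoretic result the paper neither states nor cites.

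The paper avoids the double charge entirely: instead of closing up $k_0$ and $k_1$ separately, it uses the disk $D_1\subset X_1$ bounded by $k_1\cup\gamma$ (with $\gamma$ lying on the attaching sphere of the $1$-handle) to slide $k_1$ across the handle into the $X_0$-diagram --- the move of Lemma \ref{lem:arc_gamma} and Figure \ref{fig:34} --- which, by the remark following that lemma, preserves the sum $tb(k_0)+tb(k_1)$. After the slide, $K$ is a single closed Legendrian unknot in the diagram of $X_0$, viewed in the tight boundary $\partial X_0$ (Theorem \ref{thm:Stein_fillable_tight}), and one application of the three-dimensional Bennequin inequality (Theorem \ref{thm:tb_bound}) gives $\widetilde{tb}(K)\le -\chi(D^2)=-1$. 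The cleanest repair of your argument is to replace your generic Legendrian closure by this specific handle-slide, i.e., to prove your third step by reducing to the one-page situation --- at which point the four-dimensional adjunction inequality is no longer needed and the classical Bennequin inequality suffices.
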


\begin{proof}
	By assumption there exists a disk $D \subset D(X)$ with $K=\partial D$. There are two cases: Either $k_1=\emptyset$ or $k_1\neq\emptyset$. If $k_1=\emptyset$ holds, then $K=k_0$ is a Legendrian unknot inside the Legendrian tangle in the Stein diagram of $X_0$. Therefore, it can be considered as a Legendrian unknot bounding the disk $D$ inside the Stein fillable (and so tight) boundary $\partial X_0$. But then Theorem \ref{thm:tb_bound} implies that  $\widetilde{tb}(K) \leq -1$. If $k_1\neq\emptyset$ holds, then this means that $D=D_0 \cup D_1$ where $D_0,D_1$ are disks in $X_0, X_1$, respectively, which meet along an arc $\gamma$ on the attaching spheres of the corresponding $1$-handles of $X_0$ and $X_1$. Then applying Lemma \ref{lem:arc_gamma}, one can transform $K$ to $K'$ which lies in $X_0$. Recall that $\widetilde{tb}(K)=\widetilde{tb}(K')$, that is the number $\widetilde{tb}$ does not change under the move described in the proof of Lemma \ref{lem:arc_gamma} (Figure \ref{fig:34}). Therefore, we are again in the first case above, i.e., $\widetilde{tb}(K)=\widetilde{tb}(K') \leq -1$.
\end{proof}

\begin{lemma}
	Let $k_0' \cup k_1'$ be a component of the link of transverse intersection of $L$ with the double $D(X)=X_{0}\cup_{\partial} X_{1}$  constructed using (not necessarily minimal) geometric intersection points of $L$ and $B$.
	If the knot $k_0' \cup k_1'$ is homotopically trivial in both $L$ and the double $D(X)$, then it can be ignored while computing $M\mathcal{P}_{X}(L)$. That is, $$M\mathcal{P}_{X}(L)>\mathcal{P}_{X}(L).$$	
\end{lemma}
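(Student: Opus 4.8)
The plan is to deduce the statement from the two preceding lemmas together with a standard ``push--off'' isotopy. Write $K:=k_0'\cup k_1'$ for the given component of $L\pitchfork D(X)$. Since $K$ is homotopically trivial in the double $D(X)$, Lemma~\ref{lem:tb_negative} immediately gives
\[
\widetilde{tb}(K)=tb(k_0')+tb(k_1')\leq -1,
\]
so the contribution of $K$ to $\mathcal{P}_X(L)$ is strictly negative. It then remains to produce a representative $L'\in[L]$ with $L'\pitchfork D(X)$ for which $L'\cap D(X)$ is exactly $(L\cap D(X))\setminus K$, the remaining components (and their diagrams) being untouched. Granting this, $\mathcal{P}_X(L')=\mathcal{P}_X(L)-\widetilde{tb}(K)\geq \mathcal{P}_X(L)+1$, and therefore $M\mathcal{P}_X(L)\geq \mathcal{P}_X(L')>\mathcal{P}_X(L)$, which is the assertion; equivalently, a component that is homotopically trivial in both $L$ and $D(X)$ can never occur in a representative realising the maximum, so it may be ``ignored'' when computing $M\mathcal{P}_X(L)$.

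To construct $L'$ I would argue as follows. Since $L$ is a closed orientable surface and $K$ is an embedded null-homotopic circle on it, $K$ bounds an embedded disk $D_L\subset L$. Choosing $D_L$ innermost among all disks in $L$ bounded by components of $L\cap D(X)$ (any component of $L\cap D(X)$ lying in the interior of such a disk is itself null-homotopic in $L$, so an innermost choice exists), we may assume $\mathrm{int}(D_L)\cap D(X)=\emptyset$; in particular $\mathrm{int}(D_L)$ lies locally on one side of the hypersurface $D(X)$ along $K$, and $D_L$ is disjoint from the other components $K^{j}$. As $D_L$ is a compact Legendrian (hence isotropic) disk, the isotropic/Legendrian neighbourhood theorem identifies a tubular neighbourhood $N$ of $D_L$ in $M$ with a neighbourhood of the zero section in $J^1(D^2)\subset (\R^5,\xi_0)$ — in particular $N$ is a Darboux ball — in which $L$ is the zero section and $D(X)$ is a hypersurface meeting it along $K=\partial D_L$. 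Exactly as in the proof of Lemma~\ref{lem:arc_gamma} and of the Spider Lemma~\ref{lem:spider}, one can then write down a compactly supported contact isotopy of $N$ that pushes a neighbourhood of $D_L$ in $L$ to the side of $D(X)$ containing $\mathrm{int}(D_L)$, fixing $L$ outside $N$; this is a Legendrian isotopy, it removes the whole intersection circle $K$, and (after a small further perturbation) it preserves transversality of the remaining components $K^{j}$ and leaves their diagrams, hence the numbers $\widetilde{tb}(K^{j})$, unchanged. This yields the bookkeeping identity $\mathcal{P}_X(L')=\mathcal{P}_X(L)-\widetilde{tb}(K)$ used above.

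The step I expect to require the most care is this last one: verifying that the removal of $K$ is realised by a genuine \emph{Legendrian} isotopy (not merely a smooth one), which is why the normal form for a neighbourhood of the Legendrian disk $D_L$ — a Darboux ball in which $L$ is the zero section — is essential, exactly in the spirit of the explicit contact push-offs already used for Lemma~\ref{lem:arc_gamma}. A secondary point to be handled with care is the innermost-disk bookkeeping, ensuring that removing $K$ affects neither the transversality nor the $\widetilde{tb}$-contribution of any other component of $L\cap D(X)$. Everything else — the inequality $\widetilde{tb}(K)\leq -1$ and the final comparison with the maximum — is immediate from Lemma~\ref{lem:tb_negative} and the definition of $M\mathcal{P}_X(L)$.
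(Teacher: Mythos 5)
Your argument follows essentially the same route as the paper: both deduce $\widetilde{tb}(k_0'\cup k_1')\leq -1$ from Lemma \ref{lem:tb_negative} and both eliminate the component by a compactly supported contact isotopy inside a Darboux ball around the $3$-ball bounded by the disk in $L$ and the disk in $D(X)$ (the paper isotopes the binding and pages and then rewinds to transfer the motion to $L$, whereas you isotope $L$ directly via the Legendrian neighbourhood theorem; your innermost-disk bookkeeping makes explicit a point the paper only asserts). One small correction: to kill the intersection circle $K=\partial D_L$ you must push $D_L$ \emph{across} $D(X)$, through the $3$-ball it cobounds with the disk $D_0\cup D_1\subset D(X)$, onto the side occupied by the annular collar of $L$ beyond $K$ --- pushing a neighbourhood of $D_L$ further onto the side already containing $\mathrm{int}(D_L)$ cannot work, since $L$ is fixed outside the support of the isotopy and remains anchored on the opposite side, leaving a parallel intersection circle.
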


\begin{figure}[h!]
	\centering
	\includegraphics[scale=.73]{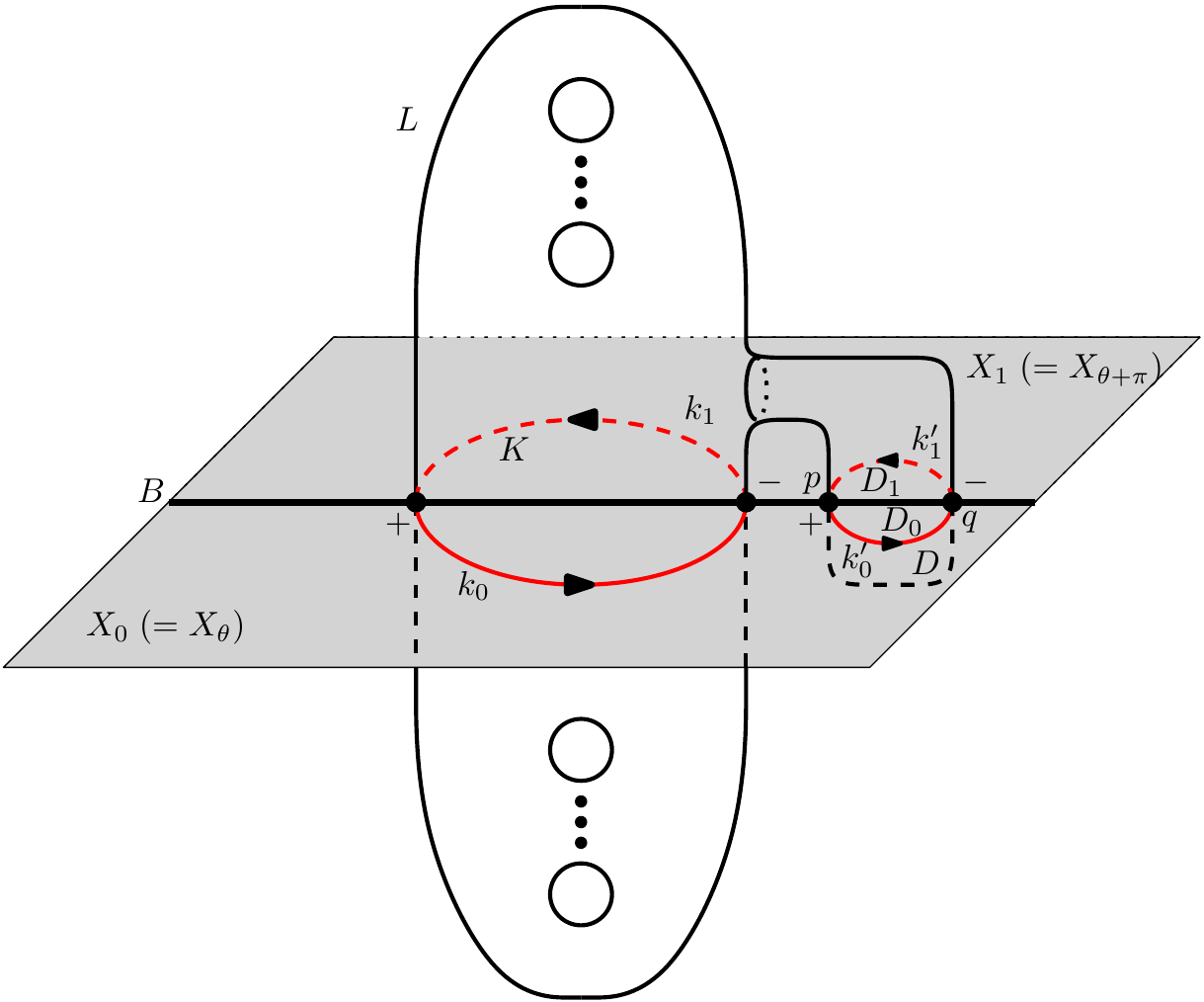}
	\vspace{0.5cm}
	\caption{ A (geometrically) cancelling pair of points ($p$ and $q$) of intersection between $L$ and the binding $B$. }
	\label{fig:35}
\end{figure}

\begin{proof}
Suppose there exists such a pair of Legendrian arcs $k_{0}'$ and $k_{1}'$ in the Stein diagrams of $X_{0}$ and $X_{1}$ whose union is homotopically trivial in both $L$ and the double $D(X)$. Therefore, there are disks $D_i \subset X_i$ such that the union $D_{0}\cup D_{1} \subset D(X)$ (enclosed by $k_{0}'\cup k_{1}'$) is not punctured by the rest of $L\cap D(X)$ and the attaching circles of the $2$-handles of $X_{0}$ and $X_{1}$, and also there is a disk $D \subset L$ bounded by $k_{0}'\cup k_{1}'$ (Figure \ref{fig:35}). Then one can get rid of the intersection arcs $k_{0}'$,  $k_{1}'$ (and so the corresponding intersection points $p$, $q$) by isotoping $B$ (and the pages of the open book) in a neighborhood $N$ of the $3$-disk enclosed by the disks $D \subset L$ and $D_{0}\cup D_{1}$ in $M$ (which is some Darboux ball $\mathbb{D}^5$) using the flow of an appropriate contact vector field compactly supported in $N\cong \mathbb{D}^5$. (See Figure \ref{fig:36}.) 

\begin{figure}[h!]
	\centering
	\includegraphics[scale=.73]{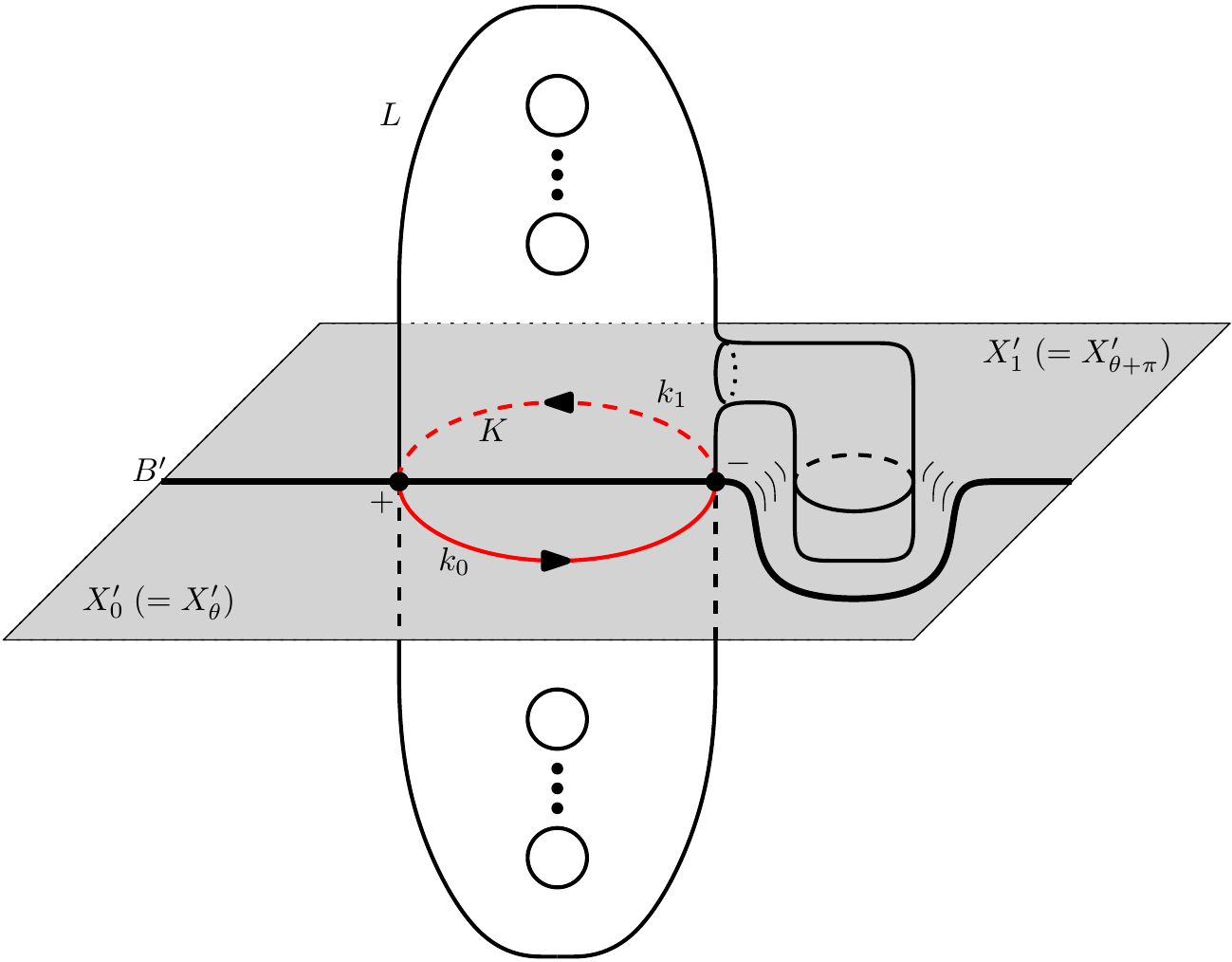}
	\caption{ Isotoping the binding $B$ and correspondingly all the pages of the open book using the flow of a compactly supported contact vector field. }
	\label{fig:36}
\end{figure}

By Lemma \ref{lem:spider} and the genericity, one can think of this isotopy results in a new open book (with the same monodromy) so that $X_i$ is transformed to a new page $X_i'$, and $B$ is transformed to a new binding $B'$. Note that this contact isotopy eliminates $k_{0}'$,  $k_{1}'$. Now we rewind this isotopy to move all the points inside the Darboux ball $\mathbb{D}^5 (\cong N)$ back to the their original positions (at the initial time). While this transform $X_0', X_1'$ and $B'$ back to their original positions, the part of $L$ in  $\mathbb{D}^5$ will be pushed further, and we get a Legendrian isotopic copy $L'$ of $L$ which does not intersect $B$ along $k_{0}'$,  $k_{1}'$. Since the isotopy is compactly supported near $k_{0}'$,  $k_{1}'$, the arcs describing $L'$ in the Stein digrams of $X_{0}$ and $X_{1}$ coinsides with the ones describing $L$ outside the Darboux ball $\mathbb{D}^5$. Therefore, to picture $L'$ in these diagrams, we simply erase the arcs $k_{0}'$,  $k_{1}'$ from the diagrams, and hence ignore their contributions to $\widetilde{tb}$. That is, we have 
	$$\widetilde{tb}(L')=\widetilde{tb}(L)-[tb(k_{0}')+tb(k_{1}')].$$
On the other hand, by Lemma \ref{lem:tb_negative}, we have $$tb(k_{0}')+tb(k_{1}')<0,$$ and so, combining this with the above equality we get $$\mathcal{P}_{X}(L')>\mathcal{P}_{X}(L).$$ Hence, $\mathcal{P}_{X}(L)$ can not be maximum, and so it is strictly less than $M\mathcal{P}_{X}(L)$.

\end{proof}


\section{Proof of Theorem \ref{thm:relative_page_crossing}} \label{sec:well-definedness_isotopies}

In this section, we will show that the number $M\mathcal{P}_{X}(L)$ is preserved under Legendrian isotopies, and also explain why it is well-defined. First, assuming it is well-defined, one can easily observe the following:

\begin{lemma} \label{lem:invariance_under_Legendrian_isotopy}
The number $M\mathcal{P}_{X}(L)$ is invariant under Legendrian isotopies of $L$.
\end{lemma}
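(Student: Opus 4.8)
The plan is to reduce the statement to the elementary observation that $M\mathcal{P}_{X}(L)$, by its very construction, depends on $L$ only through its Legendrian isotopy class $[L]$. Concretely, suppose $\widetilde{L}$ is Legendrian isotopic to $L$. Since Legendrian isotopy is an equivalence relation on embedded Legendrian surfaces of $(M,\xi)$ --- reflexivity via the constant family, symmetry by reversing the parameter $t\mapsto 1-t$, and transitivity by concatenating two families and reparametrizing near the junction so that the resulting $1$-parameter family of Legendrian embeddings $\phi_{t}:\Sigma\hookrightarrow(M,\xi)$ is again smooth --- being Legendrian isotopic to $L$ is the same condition as being Legendrian isotopic to $\widetilde{L}$. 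Hence $[L]=[\widetilde{L}]$ as subsets of the set of Legendrian surfaces of $(M,\xi)$.

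Next I would observe that the index set of the maximum appearing in the definition,
$$\mathcal{S}(L):=\bigl\{\,\mathcal{P}_{X}(L')\ \bigm|\ L'\in[L]\ \text{and}\ L'\pitchfork D(X)\,\bigr\},$$
therefore satisfies $\mathcal{S}(L)=\mathcal{S}(\widetilde{L})$: for each admissible $L'$ the integer $\mathcal{P}_{X}(L')=\sum_{i}\widetilde{tb}(K_{i})$ is computed purely from the link $L'\pitchfork D(X)$ and the fixed Stein diagrams of the dual pages $X_{0},X_{1}$, with no reference to $L$ or $\widetilde{L}$ beyond the membership condition $L'\in[L]=[\widetilde{L}]$. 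Granting that $M\mathcal{P}_{X}$ is well-defined --- i.e. that $\mathcal{S}(L)$ is nonempty (any representative may be $C^{\infty}$-perturbed within $[L]$ to be transverse to $D(X)$ by a genericity argument, and the \emph{essentially intersecting} hypothesis on $(B,f)$ guarantees $L'\cap D(X)\neq\emptyset$, so that $\mathcal{P}_{X}(L')$ is genuinely defined as a nonempty sum) and bounded above, which is precisely the content of the rest of Section \ref{sec:well-definedness_isotopies} --- we conclude
$$M\mathcal{P}_{X}(L)=\max \mathcal{S}(L)=\max \mathcal{S}(\widetilde{L})=M\mathcal{P}_{X}(\widetilde{L}),$$
which is the asserted invariance.

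There is no serious obstacle in this argument; the genuine work is the well-definedness (finiteness of the maximum and independence of the auxiliary choices used to compute $\mathcal{P}_{X}$ on a transverse representative), which is deferred to the remainder of the section. The only points that merit a line of care are that the two descriptions of a Legendrian isotopy appearing in Section \ref{subsec_Legendrian_submanifolds} --- as a family $\phi_{t}:\Sigma\hookrightarrow(M,\xi)$ of Legendrian embeddings and as a family $L_{t}$ of embedded Legendrian surfaces --- are interchangeable, and that transitivity of the relation requires the standard smooth reparametrization when concatenating. With these understood, the lemma follows immediately from the fact that $M\mathcal{P}_{X}$ is, by definition, an invariant of the class $[L]$.
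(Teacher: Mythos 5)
Your proposal is correct and follows essentially the same route as the paper: both arguments observe that $[L]=[\widetilde{L}]$ for Legendrian isotopic surfaces, so the set over which the maximum is taken is unchanged, with well-definedness of that maximum deferred to the rest of the section. The extra care you take with the equivalence-relation properties and the equivalence of the two descriptions of a Legendrian isotopy is fine but not a substantive departure.
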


\begin{proof}
Consider any Legendrian isotopy $L_t$ ($t\in [0,1]$) between $L=L_0$ and $L_1$. Let $X$ be a fixed page of an admissable open book $(B,f)$ for $L$ such that $D(X)$ essentially intersects $L$. Suppose that $L' \in [L]$ is a representative maximizing $\mathcal{P}_{X}$, that is,  $$M\mathcal{P}_{X}(L)=\mathcal{P}_{X}(L').$$ Since $L_1$ is Legendrian isotopic to $L$, we have $[L_1]=[L]$, that is, their Legendrian isotopy classes are the same. Therefore, $L'$ is maximizing $\mathcal{P}_{X}$ among all representatives in $[L_1]$ as well, that is,  $$M\mathcal{P}_{X}(L_1)=\mathcal{P}_{X}(L').$$ Hence, $M\mathcal{P}_{X}(L_1)=\mathcal{P}_{X}(L')=M\mathcal{P}_{X}(L)$ as required.

\end{proof}

In order to show that $M\mathcal{P}_{X}(L)$ is well-defined, first of all, one needs to understand how $\mathcal{P}_{X}(L)$ changes under possible types of Legendrian isotopies of $L$. For a fixed page $X$, there are two types of Legendrian isotopies of a given Legendrian surface $L$ which are called a \textit{regular isotopy} and an \textit{irregular isotopy}.


\subsection{Regular Isotopy}
Let $L\hookrightarrow (M^5, \xi)$ be a closed orientable Legendrian surface. Take an admissable open book $(B, f)$ for $L$. Fix a page $X$ of the open book $(B, f)$ such that $L$ is transversely intersecting the double $D(X)$. (By genericity, this is possible.) we define:

\begin{definition}
	A \textbf{\textit{regular isotopy of}} $L$ \textbf{\textit{with respect to}} $D(X)$ is a Legendrian isotopy $L_t$ ($t\in [0,1]$) of $L=L_0$ such that $L_t$ transversely intersects $D(X)$ for all $t\in [0,1]$.
\end{definition}

Under the assumptions introduced above, we have

\begin{proposition} \label{prop:regular_isotopy}
	The number $\mathcal{P}_{X}(L)$ is invariant under regular Legendrian isotopies of $L$ with respect to $D(X)$.
\end{proposition}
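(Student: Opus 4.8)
The plan is to track exactly how the link $L \pitchfork D(X)$ and the Legendrian tangle data in the two Stein diagrams of $X_0$ and $X_1$ change along a regular isotopy $L_t$, and to show that the total $\widetilde{tb}$-sum $\mathcal{P}_X(L_t) = \sum_i \widetilde{tb}(K_t^i)$ is locally constant in $t$. The key observation is that, since $L_t$ stays transverse to $D(X)$ (and by genericity we may assume transverse to $X_0$, $X_1$ and to $B$ for all but finitely many $t$), the intersection link $L_t \pitchfork D(X)$ varies by an \emph{ambient isotopy inside $D(X)$} except at finitely many moments where one of the following local model moves occurs: (a) a Reidemeister-type move of the Legendrian tangle inside a single page $X_0$ or $X_1$ (these are precisely the Legendrian front moves, which preserve $tb$ of each component); (b) a component, or an arc of a component, crossing the fold $B$ — i.e. an intersection point of $L_t$ with $B$ being created or destroyed in a cancelling pair, equivalently an arc of some $k^i_0$ sliding through $B$ to become an arc of $k^i_1$ (or vice versa); (c) a component crossing one of the attaching spheres of the $1$- or $2$-handles of $X_0$ or $X_1$ (a handle slide / arc-through-handle move). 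First I would set up this stratification carefully, citing genericity for the curve $L_t \cap D(X)$.

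Next I would check invariance of $\mathcal{P}_X$ across each move. For (a), a move supported in the interior of one page $X_j$ is a Legendrian isotopy of the tangle arcs in the fixed Stein diagram of $X_j$; since $tb$ of a Legendrian knot/arc (computed as $w - \tfrac12 c$) is a Legendrian-isotopy invariant, $tb(k^i_j)$ — and hence $\widetilde{tb}(K^i) = tb(k^i_0)+tb(k^i_1)$ — is unchanged. For (b), this is exactly the move analyzed in Lemma~\ref{lem:arc_gamma} and the subsequent Remark: pushing an arc $k_1$ of a component through the fold (through a disk $D \subset X_1$ bounded by $k_1$ and an arc $\gamma$ on the attaching sphere $S$) is realized by the flow of a contact vector field supported in a Darboux ball, and the Remark records precisely that $tb(k_0)+tb(k_1) = tb(k_0')+tb(k_1')$ under this move — the lost cusp/crossing data on the $X_1$-side is exactly compensated, so $\widetilde{tb}$ of the component, and thus the whole sum, is preserved. (If the move instead merges or splits components by creating a cancelling pair on $B$, the same local computation shows the combined $\widetilde{tb}$ before equals the sum of the pieces after.) For (c), sliding an intersection arc over a $1$-handle or across a $2$-handle attaching sphere of a Stein surface is a handle-slide-type move in a Gompf-style Stein diagram, under which the Thurston--Bennequin number of each Legendrian component in $\partial X_j$ is again unchanged (this is part of why $tb$ is well-defined in that setting, cf. Theorem~\ref{thm:Stein_handlebody_condition} and \cite{G8}); so $\widetilde{tb}$ is unaffected.

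Finally, since $\mathcal{P}_X(L_t)$ is an integer-valued function of $t \in [0,1]$ that is constant on each piece of a finite stratification of $[0,1]$ and unchanged across each transition, it is constant on all of $[0,1]$, giving $\mathcal{P}_X(L_0) = \mathcal{P}_X(L_1)$, which is the assertion of the Proposition.

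I expect the main obstacle to be the bookkeeping in step (b)--(c): namely, verifying that the list of local moves really is exhaustive (that a generic regular isotopy decomposes into exactly these model moves, with no exotic degenerations) and, in move (b), being careful that when an arc passes through the fold $B$ the contributions to $\widetilde{tb}$ genuinely cancel for \emph{all} configurations of cusps — not just the single-cusp picture drawn in Figure~\ref{fig:34} — and that handle-slides in the \emph{relative} setting (arcs with endpoints on the distinguished segments of the $1$-handles, rather than closed Legendrian knots) still leave $w - \tfrac12 c$ invariant. Establishing the $tb$-invariance for Legendrian \emph{arcs} (tangles), as opposed to knots, under all these moves is the technical heart of the argument; everything else is a compactness/finiteness wrapper around it.
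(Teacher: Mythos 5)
Your proposal is correct and follows essentially the same route as the paper's proof: the paper likewise observes that a regular isotopy induces a Legendrian isotopy of the intersection arcs inside the Stein diagrams of $X_0$ and $X_1$ (invoking Legendrian ``moves'' and, for arcs passing through the fold $B$, exactly the mechanism of Lemma~\ref{lem:arc_gamma} and the remark following it), so that $tb(k_0)+tb(k_1)$ is preserved component by component and the sum $\mathcal{P}_X$ is unchanged. Your stratification into the three local move types is a more explicit bookkeeping of what the paper asserts in one step, but it is not a different argument.
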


\begin{proof} Consider a regular Legendrian isotopy $L_t$ ($t\in [0,1]$) of $L=L_0$. By definition $L_t$ transversely intersects $D(X)$ for all $t\in [0,1]$. We need to show that $\mathcal{P}_{X}(L')=\mathcal{P}_{X}(L)$ where $L'=L_1$ is the Legendrian copy of $L$ at time $t=1$.

Let $K=k_{0}\cup k_{1}$ be any knot component in $L\pitchfork D(X)$. Since $L_t$ transversely intersects $D(X)$ for all $t\in [0,1]$, during the isotopy, $K$ is transformed through knots $K_t \in L_{t}\pitchfork D(X)$ to a knot component $K'=k'_{0}\cup k'_{1}\in L'\pitchfork D(X)$ as depicted in Figure \ref{fig:41}. (Here we think $K=K_0$, $K'=K_1$.) 

\begin{figure}[h]
	\centering
	\includegraphics[scale=.9]{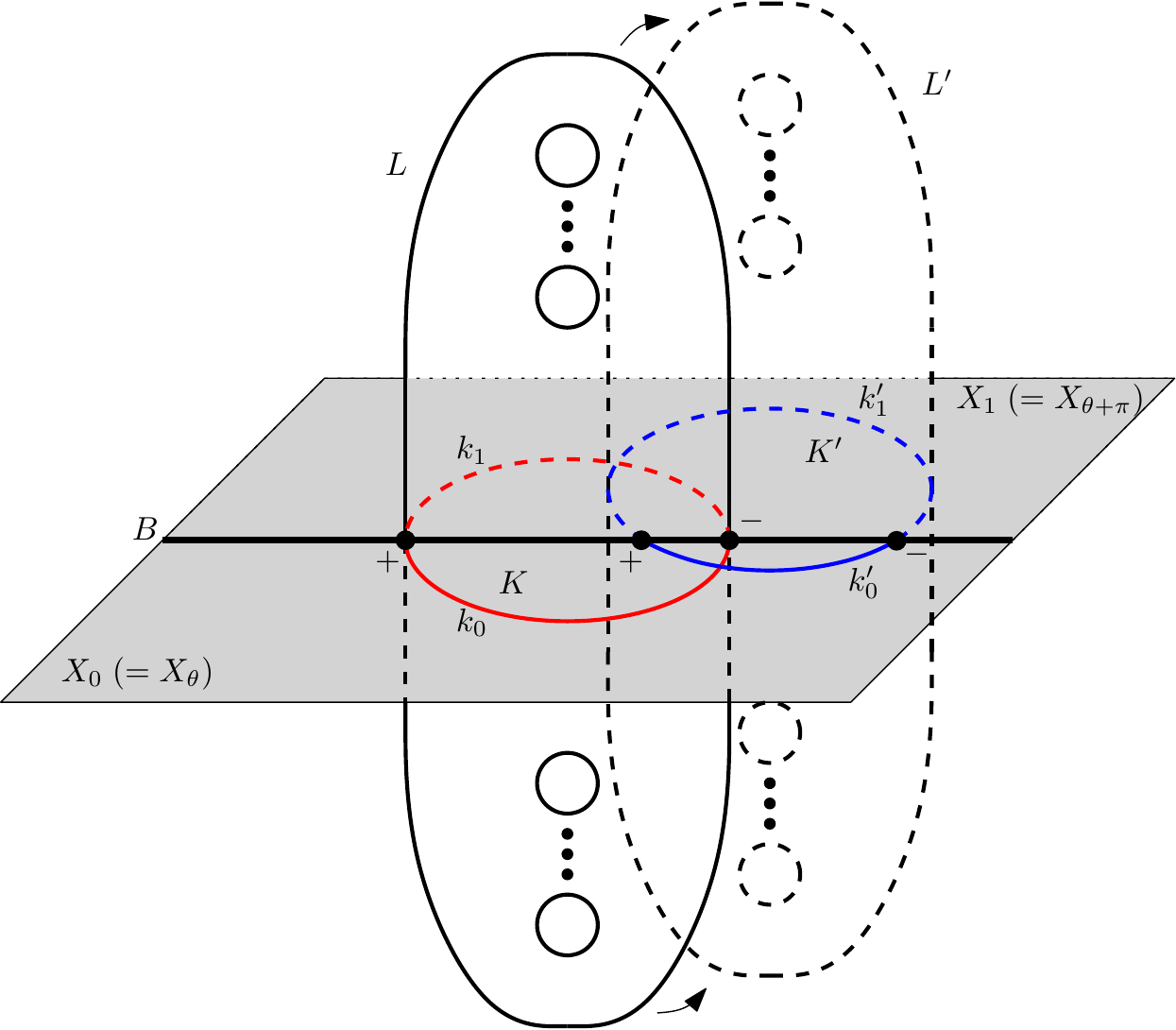}
	\caption{ A regular Legendrian isotopy $L_{t}$ taking $L_{0}=L$ to another Legendrian $L_{1}=L'$ which is still intersecting the double $D(X)=X_{0}\cup_{\partial} X_{1}$ transversely, but the new points of intersection in $L'\cap B$ are possibly different than the older ones.}
	\label{fig:41}
\end{figure}

Observe that $K_t, t\in [0,1]$ indeed defines a Legendrian isotopy from $K$ to $K'$ when we consider their arcs to be embedded Legendrian arcs inside Stein diagrams of $X_0$ and $X_1$. (See Figure \ref{fig:43} for a sample picture.)    

\begin{figure}[h]
	\centering
	\includegraphics{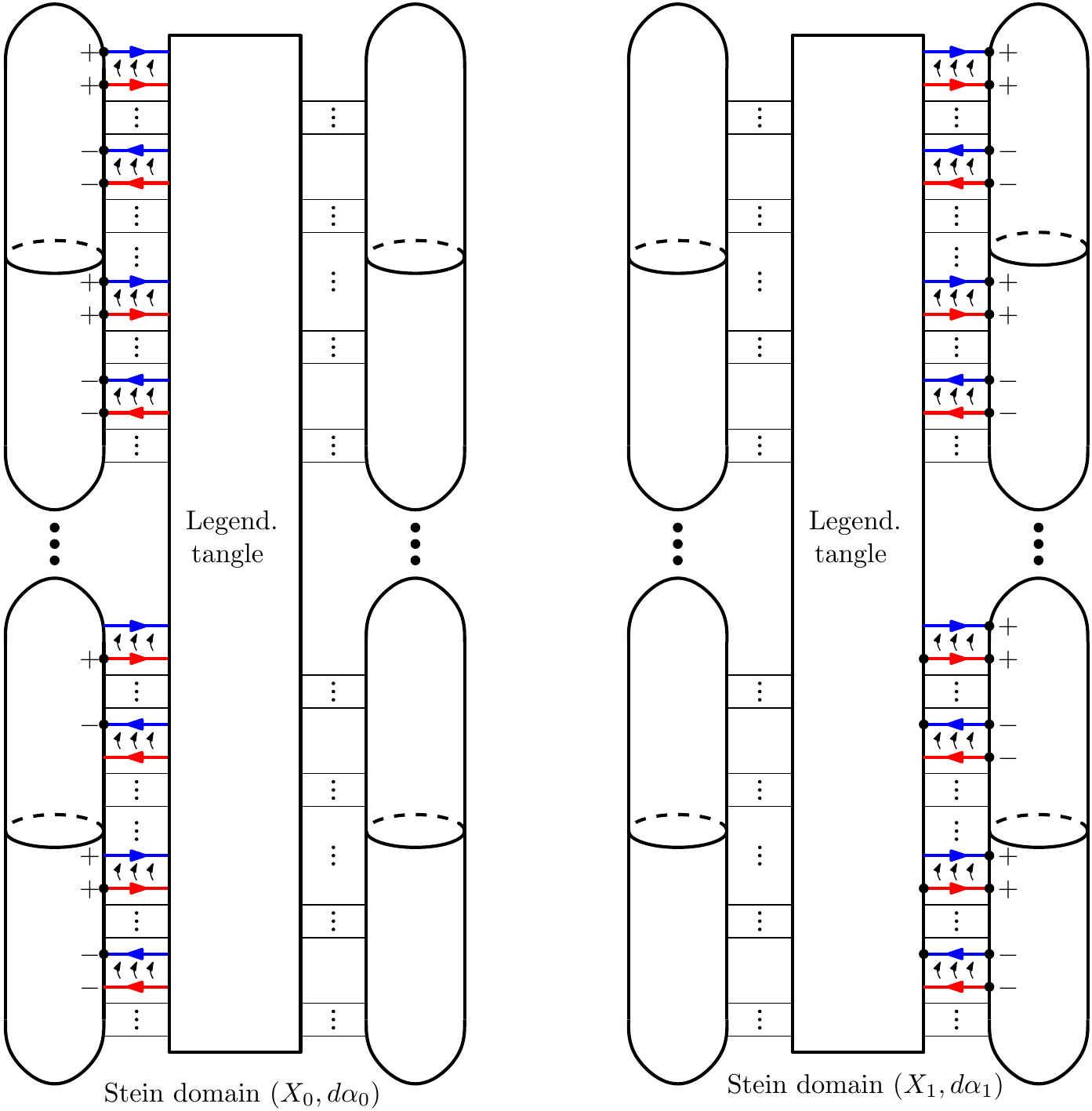}
	\caption{ Realizing a regular Legendrian isotopy $L_{t}$ ($t\in [0, 1]$) taking $L_{0}=L$ to another Legendrian $L_{1}=L'$ in the Stein diagrams of $X_{0}$ and $X_{1}$. The Legendrian arcs (in red) describing $L\cap D(X)$ are Legendrian isotopic to those (in blue) describing $L'\cap D(X)$ through a Legendrian isotopy $K_t=L_{t}\cap D(X)$, $t \in [0, 1]$. }
	\label{fig:43}
\end{figure} 

Therefore, since their union are isotopic via Legendrian ``moves'', the arcs constructing $K$ and $K'$ satisfies
\begin{center}
	$tb(k_{0})+tb(k_{1})=tb(k_{0}')+tb(k_{1}')$,
\end{center}
and so, $\widetilde{tb}(K)=\widetilde{tb}(K')$. This implies that $\mathcal{P}_{X}(L)=\mathcal{P}_{X}(L')$ because each summand of $\mathcal{P}_{X}(L)$ agrees with the corresponding summand of $\mathcal{P}_{X}(L')$ by the above discussion.

\end{proof}

\begin{remark}
Observe that all the arguments in the proof of Proposition \ref{prop:regular_isotopy} work whenever we take a Legendrian representative $L$ from the Legendrian isotopy class $[L]$ which transversely intersects the double $D(X)$. In particular, if $L$ (which we start with at the beginning of the proof) is itself maximazing all such possible page crossing numbers, i.e., if $$M\mathcal{P}_{X}(L)=\mathcal{P}_{X}(L),$$ then the same will be also true for $L'$. As a result, we have  $M\mathcal{P}_{X}(L)=M\mathcal{P}_{X}(L')$. Hence, this reproves Lemma \ref{lem:invariance_under_Legendrian_isotopy} in the case of regular Legendrian isotopies with respect to $D(X)$.
\end{remark}


\subsection{Irregular Isotopy}
Once again let $L\hookrightarrow (M^5, \xi)$ be a closed orientable Legendrian surface. Take an admissable open book $(B, f)$ for $L$. Fix a page $X$ of the open book $(B, f)$ such that $L$ is transversely intersecting the double $D(X)$.  We define:

\begin{definition}
	An \textbf{\textit{irregular isotopy of}} $L$ \textbf{\textit{with respect to}} $D(X)$ is a Legendrian isotopy $L_t$ ($t\in [0,1]$) of $L=L_0$ such that $L'=L_1$ still transversely intersects $D(X)$ but the new intersection set $L'\cap D(X)$ is obtained from $L\cap D(X)$ via a sequence of births or deaths of intersection knots or due to degenerations of knots in $L\cap D(X)$.
\end{definition}

\begin{proposition} \label{prop:irregular_isotopy}
	During irregular Legendrian isotopies of $L$ with respect to $D(X)$, there can not be any births or deaths of nontrivial intersection knots with $D(X)$. Moreover, under such isotopies, the number $\mathcal{P}_{X}(L)$ makes only finite jumps due to births or deaths of unknots and degenerations of knots in $L\cap D(X)$.
\end{proposition}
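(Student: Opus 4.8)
The plan is to study the isotopy through its trace and to reduce every change of the intersection pattern $L_t\cap D(X)$ to a move supported in a Darboux ball, so that the first assertion becomes an elementary Morse-theoretic observation and the second follows from a bounded \emph{local} computation of $\widetilde{tb}$.

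First I would form the trace $W=\bigcup_{t\in[0,1]}L_t\times\{t\}\subset M\times[0,1]$, a properly embedded $3$-manifold with $\partial W=L_0\sqcup L_1$, set $\Sigma:=W\cap\bigl(D(X)\times[0,1]\bigr)$, and study the time projection $\pi\colon\Sigma\to[0,1]$. After a small perturbation of the isotopy, kept within the class of Legendrian isotopies, I may assume $\Sigma$ is a properly embedded surface in $W$ and $\pi$ is Morse, with critical points occurring at distinct times $t_1<\dots<t_m$. Over each interval $(t_i,t_{i+1})$ the map $\pi$ has no critical values, so $L_t\cap D(X)$ is carried through transverse links by an ambient isotopy; this is precisely a regular isotopy with respect to $D(X)$, so $\mathcal{P}_X(L_t)$ is constant there by Proposition~\ref{prop:regular_isotopy}. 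Thus $t\mapsto\mathcal{P}_X(L_t)$ is piecewise constant, and everything reduces to the finitely many critical times.

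Next I would classify the elementary moves occurring at a critical time $t_i$: the links $L_{t_i-\varepsilon}\cap D(X)$ and $L_{t_i+\varepsilon}\cap D(X)$ differ by a single move supported in a Darboux ball $\mathbb{D}^5$ about the critical point $p_i$ --- an index-$0$ critical point of $\pi$ is a birth, an index-$2$ critical point a death, and an index-$1$ critical point a saddle, i.e.\ a band move reconnecting two arcs of the link inside $\mathbb{D}^5$ (a ``degeneration''). The point $p_i$ may lie in the interior of a page or on the fold $B$; in the latter case the move creates or destroys a canceling pair of points of $L\cap B$ exactly as in Lemma~\ref{lem:arc_gamma}. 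For a birth (resp.\ death) the standard local model shows that $L_t\cap D(X)$ gains (resp.\ loses) a single small circle $U\subset\mathbb{D}^5$ which bounds a disk in $L_t\cap\mathbb{D}^5$ \emph{and} a disk in $D(X)\cap\mathbb{D}^5$; hence $U$ is homotopically trivial in both $L_t$ and $D(X)$, i.e.\ an unknot, so no nontrivial intersection knot can be born or die --- this is the first assertion --- and moreover $\widetilde{tb}(U)$ is a finite integer ($=-1$ in the standard model, and $\le-1$ in general by Lemma~\ref{lem:tb_negative}), so the jump $\pm\widetilde{tb}(U)$ of $\mathcal{P}_X$ at such a $t_i$ is finite.

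It remains to bound the jump at a saddle, which I expect to be the main obstacle. There the band lies inside $\mathbb{D}^5$ and reconnects two arcs of $L_{t_i-\varepsilon}\cap D(X)$, either merging two components $K_a,K_b$ into one $K$ or splitting one component into two; outside $\mathbb{D}^5$ the Legendrian arcs drawn in the Stein diagrams of $X_0$ and $X_1$ are unchanged, so $\widetilde{tb}(K)$ differs from $\widetilde{tb}(K_a)+\widetilde{tb}(K_b)$ only by the contribution of the local band. The work is to pin down the precise local model of the band move --- especially one straddling the binding --- and to check that it alters $\widetilde{tb}$ only by a bounded amount independent of the global linking of the components: in the interior-of-a-page case this is the classical Legendrian band-sum estimate (contribution $\pm1$), and the binding case should reduce to it through the same moves used in Lemmas~\ref{lem:arc_gamma} and~\ref{lem:tb_negative}. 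Granting this, every jump of $\mathcal{P}_X(L_t)$ is finite; since there are finitely many critical times, $\mathcal{P}_X(L_t)$ makes only finitely many finite jumps, as claimed.
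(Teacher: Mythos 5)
Your proof is correct and reaches the same conclusions through the same key inputs (Proposition \ref{prop:regular_isotopy} between critical times, Lemma \ref{lem:tb_negative} for the unknots, the $-1$ cusp count for band moves), but it packages the genericity argument differently and more systematically. The paper simply perturbs the isotopy so that non-transversality occurs at finitely many times $t_0<\cdots<t_r$, invokes ``smoothness and compactness'' for finiteness, and rules out births of nontrivial or linked knots by arguing that such a birth would force some $L_s$ to fail to be an embedding. You instead pass to the trace $W\subset M\times[0,1]$, make the time projection on $\Sigma=W\cap(D(X)\times[0,1])$ Morse, and read off the birth/death/saddle trichotomy from the index of each critical point; the first assertion then follows from the local Morse model (a newborn circle bounds small disks in both $L_t$ and $D(X)$, hence is an unknot), which is cleaner and more verifiable than the paper's embedding argument, and finiteness of critical times comes for free. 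What this buys is rigor on exactly the points the paper waves at; what it does not buy is the one genuinely delicate computation, the change of $\widetilde{tb}$ under a saddle move straddling the binding $B$, which you explicitly defer (``should reduce to it through the same moves used in Lemmas \ref{lem:arc_gamma} and \ref{lem:tb_negative}'') and which the paper likewise settles only by asserting that any Legendrian band introduces a single extra left cusp, so that $\widetilde{tb}(K_1\#K_2)=\widetilde{tb}(K_1)+\widetilde{tb}(K_2)-1$. Neither treatment is more complete than the other on that point, so your argument is an acceptable, and in places tighter, alternative.
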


\begin{proof} Consider an irregular Legendrian isotopy $L_t$ ($t\in [0,1]$) of $L=L_0$. By definition, $L_t$ does not transversely intersect $D(X)$ for all $t\in [0,1]$. But generically almost all intersection will be transverse. After a small perturbation of the isotopy $L_t$ (if necessary) but still calling the resulting isotopy $L_t$, one may assume that there are numbers $0<t_0<t_1<\cdots <t_r<1$ so that except finitely many $L_{t_i},  (i=0,1,...,r)$, any other $L_t$ intersects $D(X)$ transversely. Therefore, for the second statement, one needs to show that there exists $N\in \mathbb{N}$ such that $$|\mathcal{P}_{X}(L')-\mathcal{P}_{X}(L)|<N$$ where $L'=L_1$ is the Legendrian copy of $L$ at time $t=1$.

Let us consider the case when we pass from time $t=0$ to $t=t_0+\epsilon$ for $\epsilon<t_1-t_0$. (the discussion for passing $t=t_i-\epsilon$ to $t=t_i+\epsilon$ is similar.) First of all, comparing to those in $L\cap D(X)$ if there are new unknots (births) in $L_{t_0+\epsilon}\cap D(X)$ (they necesarrily bound disks in $D(X)$ by admissibility assumption), then these births arise as an unknot $K$ which may (or may not) bound a disk $D'$ in $L_{t_0+\epsilon}$, but they must bound a disk $D$ in $D(X)$ as depicted in Figure \ref{fig:figure_birth_unknot}. The existence of the disk $D$ and  Lemma \ref{lem:tb_negative} implies that $\widetilde{tb}$ of all these unknots are negative, and so whenever such an unknot arises, this will decrease the number  $\mathcal{P}_{X}$. Similarly, comparing to those in $L\cap D(X)$ if there are missing unknots (deaths) in $L_{t_0+\epsilon}\cap D(X)$ (which were bounding disks in $D(X)$), then these will increase the number  $\mathcal{P}_{X}$. 

\begin{figure}[h]
	\centering
	\includegraphics[scale=.9]{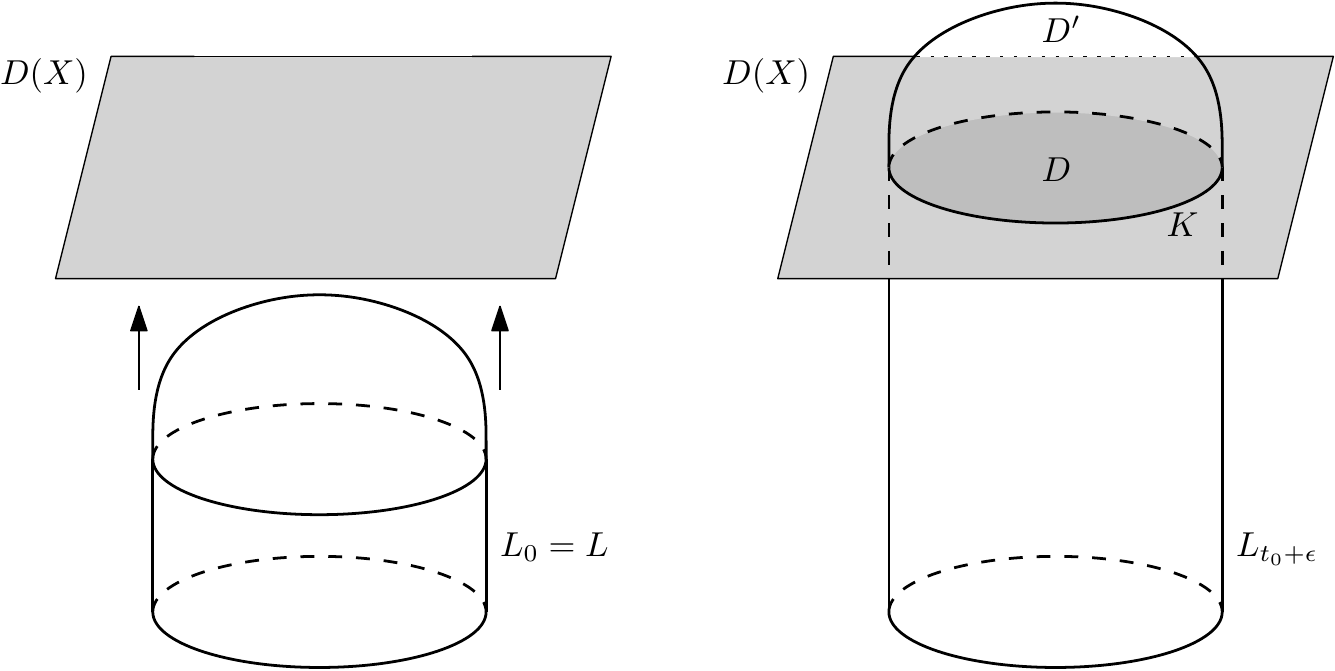}
	\caption{A birth of an unknot $K$ during an irregular Legendrian isotopy $L_{t}$ ($t\in [0, 1]$). $K$ bounds a disk $D'$ in $L_{t_0+\epsilon}$ and a disk $D$ in $D(X)$.}
	\label{fig:figure_birth_unknot}
\end{figure} 

Note that these births can not be non-trivial knots in $L_{t_0+\epsilon}$ and none of the new knots can link to a knot in $L_{t_0+\epsilon}\cap D(X)$ which was also in $L\cap D(X)$ because otherwise there would be a time $s\in (0,t_0+\epsilon)$ such that $L_s$ is not an embedding which is a contradiction. Similarly, none of the missing knots (deaths) in $L_{t_0+\epsilon}\cap D(X)$ can be a non-trivial knot and can link to a knot in $L\cap D(X)$ at the time $t=0$. To sum up, births and deaths in $L_{t_0+\epsilon}\cap D(X)$ can occur only along unknots, say $U_1,.., U_b$ are the births and $U'_1,.., U'_d$ are the deaths. Note the total number of these births and deaths is finite due to smoothness and compactness arguments. Therefore, when passing from $t=0$ to $t=t_0+\epsilon$, the change in $\mathcal{P}_X$ due to births and deaths can be at most $$|\widetilde{tb}(U_1)+\cdots +\widetilde{tb}(U_b)-(\widetilde{tb}(U'_1)+\cdots +\widetilde{tb}(U'_d))|.$$
Next, we will discuss the case when there are degenerations transforming some collection of knots in $L\cap D(X)$ to new ones in $L_{t_0+\epsilon}\cap D(X)$. Degenerations may arise as either unifications or separations which are exactly the opposite of each other, and so it suffices to understand one of them. A typical situtation of unification is the following: Suppose that  the intersection knots  $K_{1}, K_{2} \in L\cap D(X)$ degenerate during the isotopy and a new intersection knot $K \in L'\cap D(X)$ arises while $K_{1}, K_{2} $ disappear (unify) as depicted in Figure \ref{fig:42} and Figure \ref{fig:44}  where for simplicity we assume that there is a single degeneration and take $L'=L_{t_0+\epsilon}$. In the Stein diagrams of $X_{0}$ and $X_{1}$, this degeneration and the creation of $K$ correspond to bringing the $+, -$ points together on the attaching spheres of $1$-handles, and then taking a Legendrian connect sum of $K_{1}, K_{2}$ along an appropriate Legendrian band. (See Figure \ref{fig:44}.) We note that such a degeneration may also appear away from the binding, that is, it can occur in the Legendrian tangle of one of the Stein diagrams of either $X_0$ or $X_1$.\\

Observe that during an unification (resp. a separation), the number $\widetilde{tb}$ decreases (resp. increases) by $1$. More precisely, in Figure \ref{fig:45}, some different ways of obtaining a Legendrian connected sum of the knots $K_{1}$ and $K_{2}$ along appropriate Legendrian bands (in red) are given. Any Legendrian band connecting $K_{1}$ and $K_{2}$ may arise when $K_{1}, K_{2}$ unify (and a new intersection knot $K$ borns as $K_{1}\#K_{2}$) during an irregular Legendrian isotopy. Eqivalently, any Legendrian band can occur when $K$ separates and decomposes as the disjoint union of $K_1, K_2$.  It is not hard to show that no matter which Legendrian band is used (arises) during a creation (resp. separation) of $K=K_1\#K_2$), the number $\widetilde{tb}$ always decreases (resp. increases) by 1 because gluing with a Legendrian band always introduces one additional left cusp (see Figure \ref{fig:45}). That is, the following always holds:
	
	\begin{center}
		$\widetilde{tb}(K)=\widetilde{tb}(K_1\#K_2)=\widetilde{tb}(K_{1})+\widetilde{tb}(K_{2})-1$.
	\end{center}
	
To summarize, when passing  from time $t=0$ to $t=t_0+\epsilon$, if there are $M_u$ unifications and $M_s$ separations (note the total number of degenerations is again finite by smoothness and compactness arguments), then the change in $\mathcal{P}_X$ due to these degenerations can be at most $$|M_u-M_s|.$$ Combining with the births and deaths argument above, we conclude that the change in $\mathcal{P}_X$ (when passing  from time $t=0$ to $t=t_0+\epsilon$) is finite and satisfies
$$|\mathcal{P}_{X}(L_{t_0+\epsilon})-\mathcal{P}_{X}(L)|<N_{0}:=|\widetilde{tb}(U_1)+\cdots +\widetilde{tb}(U_b)-(\widetilde{tb}(U'_1)+\cdots +\widetilde{tb}(U'_d))|+|M_u-M_s|.$$

\begin{figure}[h!]
	\includegraphics[scale=.9]{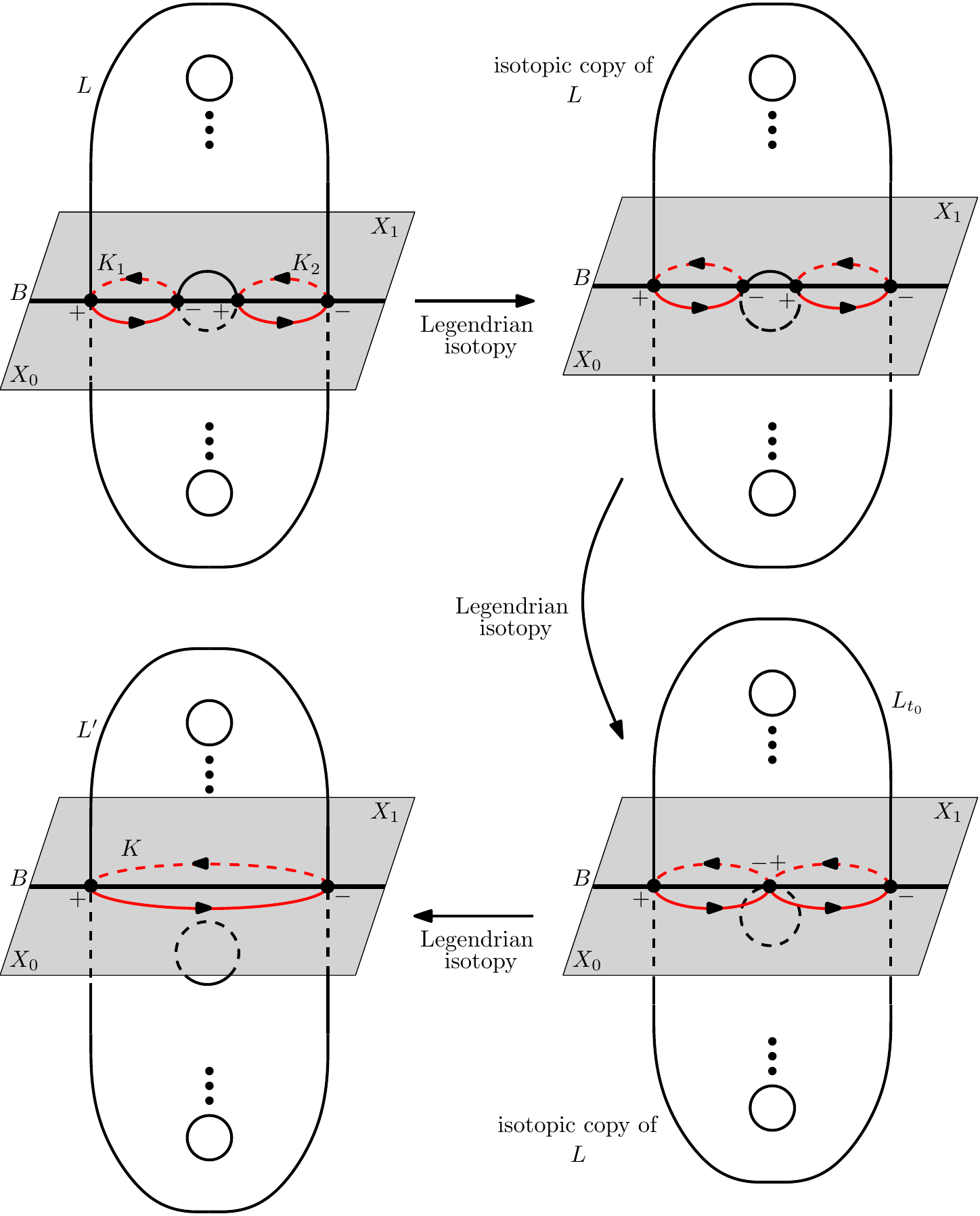}
	\caption{ A typical degeneration (unification) during an irregular Legendrian isotopy $L_{t}$ taking $L_{0}=L$ to another Legendrian $L_{1}=L'$ which also intersects the double $D(X)=X_{0}\cup_{\partial} X_{1}$ transversely, but the new arcs of intersection in $L'\cap D(X)$ are different than the older ones. The unification of $K_1$ and $K_2$ results in the creation of $K$. Note that traveling in the opposite direction (i.e., from $t=1$ to $t=0$) describes a typical separation of $K$ into $K_1$ and $K_2$.}
	\label{fig:42}
\end{figure}

As a result, repeating the above argument for each $t_i$ with $0<t_0<t_1<\cdots <t_r<1$, we conclude that during the irregular Legendrian isotopy  $L_{t}$ ($t\in [0, 1]$), the total change in $\mathcal{P}_X$ is finite. More precisely,  we have 
$$|\mathcal{P}_{X}(L')-\mathcal{P}_{X}(L)|=|\mathcal{P}_{X}(L_1)-\mathcal{P}_{X}(L_0)|<N$$ where $N:=N_{0} +\cdots + N_i+ \cdots +N_r$. Here, for each $i=1,...,r$, the bound $N_i$ is obtained (similarly to $i=0$ case above) by analyzing corresponding births/deaths and degenerations occuring when passing from $t=t_i-\epsilon$ to $t=t_i+\epsilon$. 

\end{proof}

\begin{figure}
	\centering
	\includegraphics{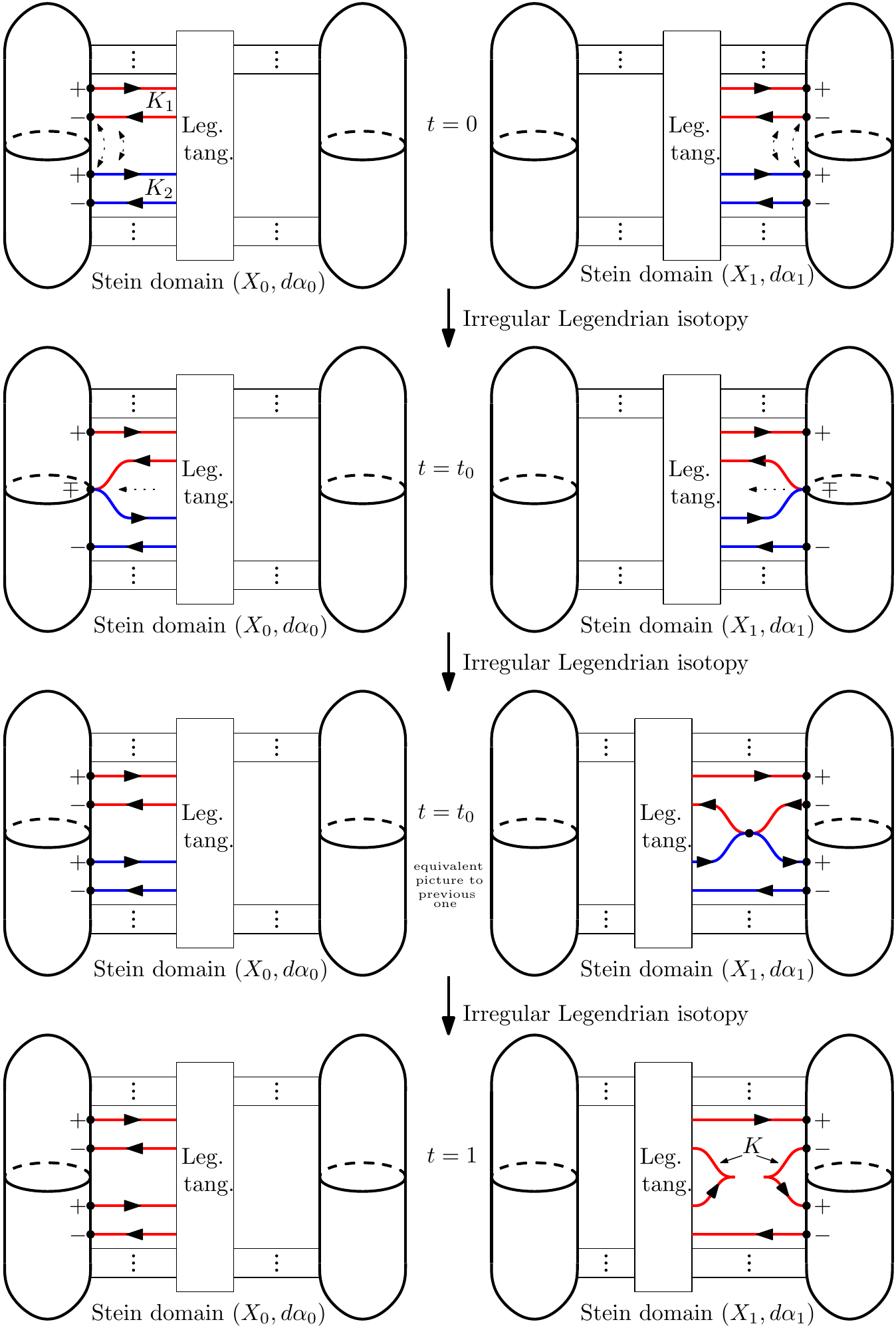}
	\caption{ Realizing a degeneration (unification) of $K_{1}$, $K_{2} \in L\cap D(X)$ and the creation of $K \in L'\cap D(X)$ during an irregular Legendrian isotopy $L_{t}$, $t \in [0, 1]$. ($L_{0}=L, L_{1}=L'$ and $L_{t_{0}}$ is not transverse to $D(X)$.) }
	\label{fig:44}
\end{figure} 

\begin{figure}[h]
	\centering
	\includegraphics[scale=.9]{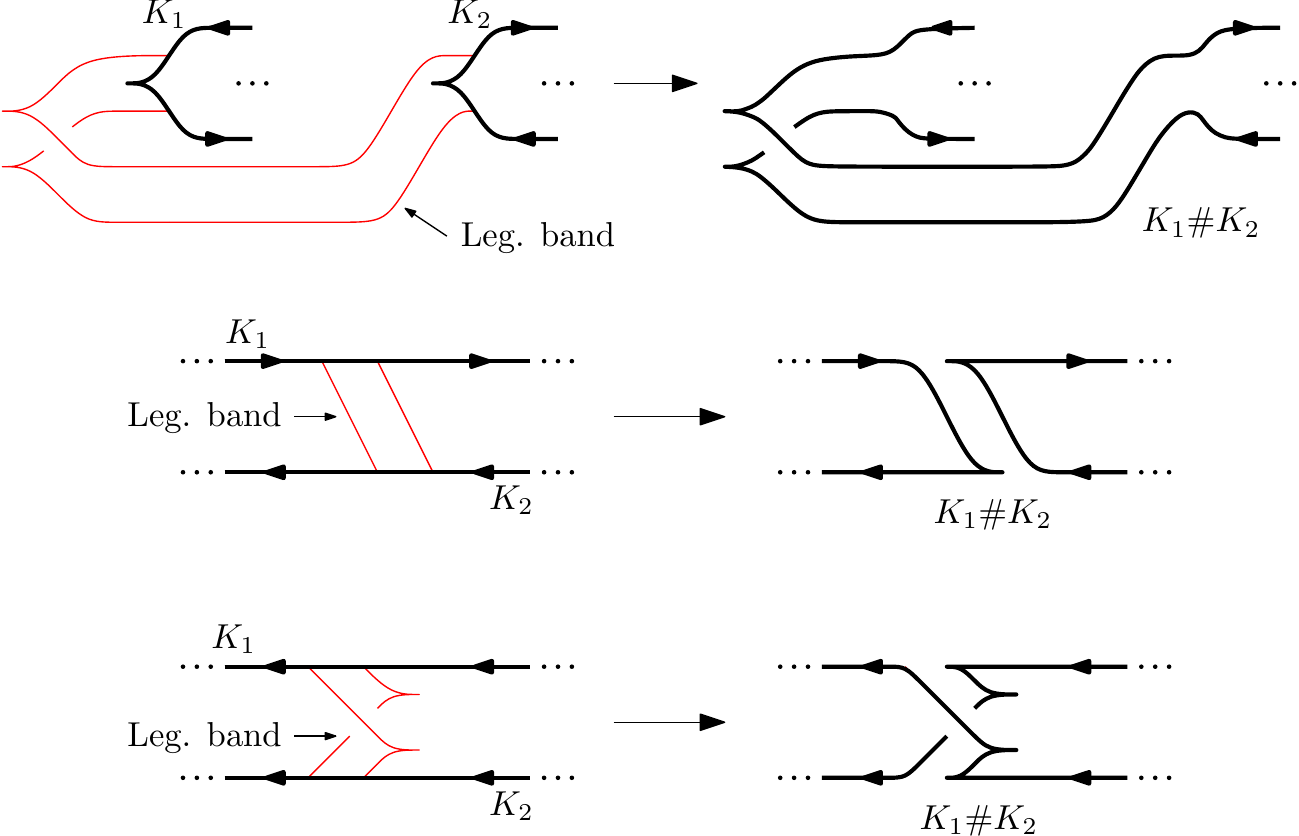}
	\caption{ Some possible ways (but not all) of obtaining a Legendrian connected sum $K_{1}\#K_{2}$ of the knots $K_{1}$ and $K_{2}$ along appropriate Legendrian bands (in red). }
		\label{fig:45}
\end{figure}

\begin{theorem} \label{thm:well-definedness_relative}
The number $M\mathcal{P}_{X}(L)$ is well-defined.
\end{theorem}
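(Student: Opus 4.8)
The plan is to show that the defining maximum
$$M\mathcal{P}_{X}(L)=\mathrm{Max}\bigl\{\,\mathcal{P}_{X}(L')\mid L'\in[L],\ L'\pitchfork D(X)\,\bigr\}$$
is genuinely attained, i.e. that it is taken over a \emph{nonempty} subset of $\Z$ that is \emph{bounded from above}; a nonempty subset of $\Z$ that is bounded above has a greatest element, so that is exactly what must be checked. Once this is done, Proposition \ref{prop:regular_isotopy} together with the short argument of Lemma \ref{lem:invariance_under_Legendrian_isotopy} turns $M\mathcal{P}_{X}(L)$ into a Legendrian isotopy invariant, which is the content of Theorem \ref{thm:relative_page_crossing}.

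First I would dispose of the easy half. Starting from any representative in $[L]$, a generic arbitrarily small Legendrian perturbation — supported in Darboux balls, exactly as in the proofs of Lemma \ref{lem:spider} and Lemma \ref{lem:arc_gamma} — makes it transverse to $D(X)$, and the \emph{essentially intersecting} hypothesis keeps the intersection nonempty; hence $\mathcal{P}_{X}$ is defined on at least one admissable $L'\in[L]$. For such an $L'$ the set $L'\pitchfork D(X)$ is a compact $1$-manifold, hence a \emph{finite} disjoint union of circles $K^{1},\dots,K^{r}$, and each $\widetilde{tb}(K^{i})=tb(k_0^{i})+tb(k_1^{i})$ is an integer independent of the auxiliary data entering its definition: $tb$ ignores the orientation of $L'$ and is a contact-geometric quantity, so it does not depend on the chosen Stein handle diagrams for $X_0,X_1$, while independence of the connecting arcs on the attaching spheres was recorded in the remarks following Lemma \ref{lem:arc_gamma}. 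Thus $\mathcal{P}_{X}(L')=\sum_{i}\widetilde{tb}(K^{i})\in\Z$ is well defined and the set of its values over admissable $L'$ is a nonempty subset of $\Z$.

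The heart of the matter, and the step I expect to be the main obstacle, is a \emph{uniform} upper bound for $\mathcal{P}_{X}(L')$ over all of $[L]$; Proposition \ref{prop:irregular_isotopy} does not suffice by itself, since it only controls the jump of $\mathcal{P}_{X}$ along one isotopy, and isotopies can be arbitrarily long. The route I would take: by the last lemma of Section \ref{sec:Relative_Page_Crossing}, any component of $L'\cap D(X)$ that bounds a disc both on $L'$ and in $D(X)$ may be deleted while \emph{strictly increasing} $\mathcal{P}_{X}$, so the supremum is unchanged if we restrict to ``reduced'' representatives having no such component. For a reduced $L'$, the components that bound a disc in $D(X)$ all satisfy $\widetilde{tb}(K^{i})\le -1$ by Lemma \ref{lem:tb_negative}, so they only lower $\mathcal{P}_{X}$ and may be discarded from the estimate; and the remaining components, which do not bound a disc in $D(X)$, are rigid: since the pages of $(B,f)$ are simply connected, so is $D(X)$ (van Kampen applied to $D(X)=X_0\cup_{B}X_1$), and each component of $M\setminus(X_0\cup X_1)$ deformation retracts onto a page, which forces any two such components that are parallel on $L'$ to bound discs in $D(X)$ (the annulus between an innermost parallel pair lies in the closure of one such complementary half and is carried by the retraction onto a simply connected page) — contradicting that they are non-disc-bounding. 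Hence the surviving components are pairwise non-parallel on the fixed surface $\Sigma$, so their number is at most a constant $N_{1}(\Sigma)$ depending only on the genus of $\Sigma$; and for each of them, completing the arcs $k_0^{i},k_1^{i}$ to Legendrian knots in the contact boundaries $\partial X_0,\partial X_1$ by the attaching-sphere arcs of Lemma \ref{lem:arc_gamma} — boundaries that are tight by Theorem \ref{thm:Stein_fillable_tight} — a Bennequin/adjunction-type inequality (Theorem \ref{thm:tb_bound}) bounds $tb(k_j^{i})$, hence $\widetilde{tb}(K^{i})$, above by a constant $C(B,f)$. Combining, $\mathcal{P}_{X}(L')\le N_{1}(\Sigma)\,C(B,f)$ for every $L'\in[L]$, so the supremum is finite, the maximum is realized, and $M\mathcal{P}_{X}(L)$ is well defined.

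The delicate points that will absorb most of the work are exactly those last two: the innermost-disc and parallel-annulus bookkeeping that pins down which components persist in a reduced configuration (nested components on $L'$, and making the retraction of each half of $M\setminus(X_0\cup X_1)$ onto a page precise), and the Bennequin-type bound on the pages — the clean form of Theorem \ref{thm:tb_bound} wants the completed arcs to be nullhomologous in $\partial X_j$, so this may require either Gompf's adjunction inequality for Stein surfaces or a preliminary stabilization of $(B,f)$ within its admissable class in order to bring $H_1$ of the binding under control. With the maximum established, Theorem \ref{thm:relative_page_crossing} follows immediately from Lemma \ref{lem:invariance_under_Legendrian_isotopy}.
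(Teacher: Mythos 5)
Your overall scheme --- show the value set is a nonempty subset of $\Z$ that is bounded above, hence has a maximum --- is the right target, and your first half (nonemptiness via the essentially-intersecting hypothesis, finiteness and integrality of each $\mathcal{P}_{X}(L')$) matches the paper. But from there you take a genuinely different route: you attempt a single \emph{uniform} upper bound $\mathcal{P}_{X}(L')\le N_1(\Sigma)\,C(B,f)$, whereas the paper never produces such a bound. Instead it restricts to representatives meeting $D(X)$ minimally and without ``decomposable'' components, puts a partial order on the isotopy-equivalence classes of the resulting intersection links (ordered by isotopies admitting only separating degenerations, which strictly increase $\widetilde{tb}$), proves that every chain of such separations terminates after finitely many steps, and extracts a maximal element by Zorn's Lemma. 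The divergence is not itself a defect, but your shortcut has two genuine gaps at exactly the step you identify as the heart of the matter.

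First, the parallel-annulus argument does not close. Retracting an innermost annulus (or disc) of $L'$ lying in one complementary half of $M\setminus(X_0\cup X_1)$ onto a page only produces a free homotopy, i.e.\ a \emph{singular} disc or annulus in a $4$-manifold; it does not produce the embedded disc in $D(X)$ that your dichotomy requires, so you cannot conclude that parallel non-disc-bounding components ``must bound discs'' and reach a contradiction. (Indeed, since $D(X)$ is simply connected when $B$ is connected, \emph{every} component is null-homotopic, so a dichotomy based on homotopy collapses, while one based on embedded discs is not controlled by your retraction.) Without this, the number of surviving components is not bounded by the number of non-parallel curves on $\Sigma$ --- think of arbitrarily many parallel copies of a nonseparating curve on a torus, or nested curves bounding discs in $L'$ but no embedded disc in $D(X)$ --- and since you have no sign control on their $\widetilde{tb}$, the sum is unbounded. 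Second, the Bennequin step: Theorem \ref{thm:tb_bound} applies to null-homologous Legendrian knots bounding a surface in a tight $3$-manifold, but the completed knots $k^i_j\cup\gamma$ live in $\#^n S^1\times S^2$ and need not be null-homologous; the slice--Bennequin/adjunction inequality for Stein surfaces that you would need instead is not among the paper's tools, and you leave it as a conditional. Finally, note that if one takes Lemma \ref{lem:tb_negative} at face value (its hypothesis is ``homotopically trivial,'' not ``bounds an embedded disc'') and assumes $B$ connected, your own van Kampen observation makes every component satisfy $\widetilde{tb}\le -1$, giving the upper bound instantly with no component counting at all; you assemble the ingredients for this but do not put them together, and the embedded-disc version you actually argue with is the one that fails.
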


\begin{proof} Recall the definition of relative invariant:
	\begin{center}
		$M\mathcal{P}_{X}(L):=Max \left\{\mathcal{P}_{X}(L') \mid L' \in [L] \;\textrm{ and }\; L' \pitchfork D(X) \right\}$
	\end{center}
where $(B, f)$ is an admissable open book for $L$ and a page $X$ is chosen so that $D(X)$ essentially intersects $L$.	Therefore, for any $L' \in [L]$, the intersection $L'\cap D(X)$ is non-empty which implies that the set $$\mathcal{P}_{X}([L]):=\left\{\mathcal{P}_{X}(L') \mid L' \in [L] \;\textrm{ and }\; L' \pitchfork D(X) \right\} \subset \mathbb{Z}$$ is a non-empty subset.  In order to check well-definedness of $M\mathcal{P}_{X}(L)$, we need to verify that the set $\mathcal{P}_{X}([L])$ attains its maximum value. That is, one needs to show that  there exists $L_{max} \in [L]$ such that $$Max(\mathcal{P}_{X}([L]))=\mathcal{P}_{X}(L_{max})<\infty.$$
Equivalently, we need to show that the number $\mathcal{P}_{X}(L)$ can not be made arbitrarly large under Legendrian isotopies of $L$.
By Proposition \ref{prop:regular_isotopy}, $\mathcal{P}_{X}(L)$ is invariant, and so, can not be made arbitrarly large under regular Legendrian isotopies of $L$. Therefore, it suffices to consider irregular Legendrian isotopies of $L$. By Proposition \ref{prop:irregular_isotopy}, we know that the jump in $\mathcal{P}_{X}(L)$ under any irregular isotopy is finite. Consider the subset $$[L]_{min} \subset [L]$$ of all Legendrian representatives of $L$ in the class $[L]$ which intersects $D(X)$ transversely and minimally. In other words, for any $\widetilde{L} \in [L]_{min}$, the set $\widetilde{L} \pitchfork D(X)$ is a link in $D(X)$ contains no unknot components due to a birth which may arise during an irregular Legendrian isotopy. Clearly, by undoing any such isotopy (if needed) one can get rid of any such unknots (i.e., replacing any birth with the corresponding death), any Legendrian representative $L \in [L]$ intersecting $D(X)$ transversely can be transformed to some $\widetilde{L} \in [L]_{min}$. That is, there is a map
$$\Psi:[L] \longrightarrow [L]_{min}, \quad \Psi(L)=\widetilde{L}.$$
From its construction, it is clear that $\mathcal{P}_{X}(L')\leq \mathcal{P}_{X}(\Psi(L'))$ for any $L' \in [L]$ with $L' \pitchfork D(X)$. Therefore, we have $Max(\mathcal{P}_{X}([L]))=Max(\mathcal{P}_{X}([L]_{min}))$, and hence, it suffices to focus on the set $[L]_{min}$, i.e., if $L_{max}$ exists, then  $L_{max}\in [L]_{min}$. Equivalently, one needs to show that  there exists $L_{max} \in [L]_{min}$ such that $$Max(\mathcal{P}_{X}([L]_{min}))=\mathcal{P}_{X}(L_{max})<\infty.$$

Now inside $[L]_{min}$ consider the subset $[L]^o_{min}\subset [L]_{min}$ which consists of all Legendrian representative of $\widetilde{L} \in [L]_{min}$ such that there exists a knot component $K$ in the link $\widetilde{L} \cap D(X)$ (of transverse minimal intersections) which separates into two knots $K_1$ and $K_2$ (via some irregular Legendrian isotopy with respect to $D(X)$) such that at least one of $K_i$'s (say $K_2$) is a homotopically nontrivial unknot in $\widetilde{L}$ and does not link to any other components of the resulting link of intersection. Equivalently, $K_2$ does not bound a disk in $\widetilde{L}$ but it bounds a disk $D$ in $D(X)=X_0 \cup_{\partial}X_1$ which is not punctured with any other knot in the Stein diagrams of $X_0$ and $X_1$. Such a knot component $K$ will be called \textbf{\textit{decomposable}}. Given $\widetilde{L} \in [L]^o_{min}$, find all decomposable knots $K \in \widetilde{L} \cap D(X)$ and the corresponding $K_2$'s and $D$'s mentioned above. Composing irregular Legendrian isotopies separating $K$'s into $K_1$'s and $K_2$'s with suitable Legendrian isotopies compactly supported in small neighboorhoods of $D$'s, one can get rid of all these $K_2$'s, and repeating this argument (if necessary) eventually we obtain a Legendrian representative $$\bar{L} \in [L]_{min} \setminus [L]^o_{min}.$$
Recall that separation of a link component increases $\widetilde{tb}$ by $1$, and also erasing a Legendrian unknot (corresponding $K_2$) from Stein diagrams increases $\widetilde{tb}$ at least by $1$. Therefore, for any $\bar{L}$ obtained from $\widetilde{L} \in [L]^o_{min}$ as above, the following always holds:
$$\mathcal{P}_{X}(\widetilde{L}) < \mathcal{P}_{X}(\bar{L}).$$
This means that if $L_{max}$ exists, then it must be true that $L_{max} \in [L]_{min} \setminus [L]^o_{min}$. Equivalently, in order to prove the theorem, one needs to show that  there exists $L_{max} \in [L]_{min} \setminus [L]^o_{min}$ such that 
$$Max(\mathcal{P}_{X}([L]_{min} \setminus [L]^o_{min}))=\mathcal{P}_{X}(L_{max})<\infty.$$

\medskip
To proceed further, we need a partial order relation on the set of equivalence classes of links in $D(X)$ consisting of all possible intersections of $D(X)$ with elements in $[L]_{min} \setminus [L]^o_{min}$. More precisely, consider the set of links in $D(X)$ defined by

\begin{center}
		$\Lambda:=\left\{ L \pitchfork D(X) \mid L \in [L]_{min} \setminus [L]^o_{min} \right\}$.
\end{center}

As discussed in the earlier sections, every element (link) $K \in \Lambda$ can be realized as the union of collections $k_0, k_1$ of Legendrian arcs drawn in the Stein diagrams of $X_0, X_1$, respectively. We will write $\Vert K\Vert=\Vert K'\Vert$ and say that two links $K,K' \in \Lambda$ are \textbf{\textit{isotopy equivalent}} if each corresponding collections $k_i, k'_i$ ($i=0,1$) are related via Legendrian Reidemeister moves and their modifications (the ones which does not change $tb$) for Stein digrams described in \cite{G8}.

\begin{remark}
Note that from the definition of page crossing number, for any  $L \in [L]_{min} \setminus [L]^o_{min}$, we have $$\mathcal{P}_{X}(L)=\widetilde{tb}(L \pitchfork D(X))=\widetilde{tb}(K).$$ Therefore, showing the existence of an $L_{max} \in [L]_{min} \setminus [L]^o_{min}$ maximizing $\mathcal{P}_X$ is equivalent to showing the existence of a $K_{max} \in \Lambda$ maximizing $\widetilde{tb}$.
\end{remark}

Next we will define a partial order relation on the set 

\begin{center}
		$A:=\left\{ \;\; \Vert K \Vert \;\; \mid  \;\; K \in \Lambda \;\;\right\}$.
\end{center}

\begin{definition} \label{def:partial_order}
Let $K_0,K_1 \in \Lambda$, so there exist $L_0,L_1 \in  [L]_{min} \setminus [L]^o_{min}$ so that $K_i=L_i \pitchfork D(X)$. We will write $\Vert K_0 \Vert \preceq \Vert K_1 \Vert$ if
\begin{itemize}
\item[(I)] There is a regular or an irregular Legendrian isotopy $L_t$ $(t\in[0,1])$ with respect to $D(X)$ having only separating degenerations such that  whenever $L_t$ is transverse to $D(X)$, we have $$L_t \in [L]_{min}\setminus [L]^o_{min}.$$
\item[(II)] $\mathcal{P}_{X}(L_0) \leq \mathcal{P}_{X}(L_1)$ (or equivalently, $\widetilde{tb}(K_0)\leq \widetilde{tb}(K_1)$.)
\end{itemize}
\end{definition}

\begin{lemma} 
The pair $(A, \preceq)$ is a partially ordered set.
\end{lemma}

\begin{proof} 
\textit{Reflexivity}: For a given $\Vert K \Vert \in A$, consider any representative $K \in \Vert K \Vert$ and corresponding $L\in [L]_{min} \setminus [L]^o_{min}$, i.e., $K=L \pitchfork D(X)$. Then one can consider the trivial Legendrian isotopy fixing all the points on $L$ for all time $t$. The second condition is also clear. Therefore, $$\Vert K \Vert \preceq \Vert K \Vert.$$

\noindent \textit{Anti-symmetry}: Suppose $\Vert K_0 \Vert \preceq \Vert K_1 \Vert$ and $\Vert K_1 \Vert \preceq \Vert K_0 \Vert$ for $\Vert K_0 \Vert, \Vert K_1 \Vert \in A$. Immediately, we observe $\widetilde{tb}(K_0)\leq \widetilde{tb}(K_1)$ and $\widetilde{tb}(K_1)\leq \widetilde{tb}(K_0)$, and so $$\widetilde{tb}(K_0)=\widetilde{tb}(K_1).$$
Consider any representatives $K_0 \in \Vert K_0 \Vert, K_1\in  \Vert K_1 \Vert$ and the corresponding 
$L_0, L_1 \in [L]_{min}\setminus [L]^o_{min}$ which are connected via a Legendrian isotopy $L_t$ such that  whenever $L_t$ is transverse to $D(X)$, we have $L_t \in [L]_{min}\setminus [L]^o_{min}$.\\

If $L_t$ is a regular Legendrian isotopy with respect to $D(X)$, then $L_t$ transversely intersects $D(X)$ for all $t$. In particular, this implies that $L_t \cap D(X)$ is minimal and has no decomposable components for all $t$ because $L_0 \in [L]_{min}\setminus [L]^o_{min}$. Also observe $L_t$ induces an isotopy $K_t:=L_t \pitchfork D(X)$ (between $K_0$ and $K_1$) whose respective restrictions $K_t \cap X_i$ ($i=0,1$) defines Legendrian isotopies between componets of $K_0, K_1$ in $X_0, X_1$, respectively. In other words, $\Vert K_0 \Vert =\Vert K_t \Vert= \Vert K_1 \Vert$, so we are done in this case.\\

Now suppose $L_t$ is an irregular Legendrian isotopy (of $L_0$) with respect to $D(X)$ having only separating degenerations. As in the proof of Proposition \ref{prop:irregular_isotopy}, suppose there are numbers $0<t_0<t_1<\cdots <t_r<1$ so that except finitely many $L_{t_i},  (i=0,1,...,r)$, any other $L_t$ is an element of  $[L]_{min} \setminus [L]^o_{min}$. By assumption, during $L_t$ no births or deaths can arise, and only degenerations are separations. Recall that separations increase $\mathcal{P}_X$ and so $\widetilde{tb}$ by $1$. Therefore, one easily conclude that for any $i$ when passing from $t=t_i-\epsilon$ to $t=t_i+\epsilon$, there can not be any separations of knots in $K_{t_i-\epsilon}:=L_{t_i-\epsilon}\pitchfork D(X)$ because otherwise we would have 
$$\widetilde{tb}(K_0) \lneqq  \widetilde{tb}(K_{t_i+\epsilon}) \leq \widetilde{tb}(K_1).$$
\noindent So, $L_t$ must be a regular Legendrian isotopy indeed, and hence $\Vert K_0 \Vert = \Vert K_1 \Vert$ as discussed above. \\

\noindent \textit{Transivity}: Suppose $\Vert K_0 \Vert \preceq \Vert K_1 \Vert$ and $\Vert K_1 \Vert \preceq \Vert K_2 \Vert$ for $\Vert K_0 \Vert, \Vert K_1 \Vert, \Vert K_2 \Vert \in A$. Immediately, we observe $\widetilde{tb}(K_0)\leq \widetilde{tb}(K_1)$ and $\widetilde{tb}(K_1)\leq \widetilde{tb}(K_2)$, and so $$\widetilde{tb}(K_0)=\widetilde{tb}(K_2).$$
For each $i=0,1,2$, consider any representative $K_i \in \Vert K_i \Vert$ and the corresponding 
$L_i \in [L]_{min}\setminus [L]^o_{min}$. By assumption, there are Legendrian isotopies $L_t$ from $L_0$ to $L_1$ and 
$L'_t$ from $L_1$ to $L_2$ with the prescribed conditions in Definition \ref{def:partial_order} part (I). Then one easily concludes that $L'_t \circ L_t$ is a Legendrian isotopy from $L_0$ to $L_2$ with the desired properties. Thus, $\Vert K_0 \Vert \preceq \Vert K_2 \Vert$. 

\end{proof}

Returning back to the proof of the theorem, next we will show that every chain in $(A,\preceq)$ has an upper bound in $A$. To this end, suppose that we are giving a chain $$\Vert K_0 \Vert \preceq \Vert K_1 \Vert \preceq \Vert K_2 \Vert \preceq \cdots \preceq \Vert K_i \Vert \preceq \cdots .$$
Since regular Legendrian isotopies does not change the isotopy equivalence classes, it suffices to consider irregular Legendrian isotopy (with respect to $D(X)$) having only separating degenerations. We need to show that under such isotopies, separations must eventually stop after a finite step, and when it stops the corresponding $\widetilde{tb}$ must be finite.\\

Let $L_i$'s be Legendrian representatives in $[L]_{min} \setminus [L]^o_{min}$ such that, for each $i\geq 0$, we have $K_i=L_i \pitchfork D(X)$ and $L_{i+1}$ is the image of $L_i$ under an irregular Legendrian isotopy $L^t_i$ satifying the condition (I) of Definition \ref{def:partial_order}. Suppose the the link $K_i$ consists of $r_i$ knot components. (Recall by compactness there must be finite number of components for each $K_i$.) By Proposition \ref{prop:irregular_isotopy} and from the assumptions $K_i \in \Lambda$ and $L_i \in [L]_{min} \setminus [L]^o_{min}$, we know that each isotopy $L^t_i$ consists only of finitely many separations, and $\mathcal{P}_X$ (and so $\widetilde{tb}$) has a finite jump (increment) during each $L^t_i$. That is, we have 

\begin{center}
		$r_0 < r_1< r_2< \cdots <r_i< \cdots$
\end{center} 
with $0<r_{i+1}-r_i<\infty$, and
\begin{center}		
$\widetilde{tb}(K_0) < \widetilde{tb}(K_1)< \widetilde{tb}(K_2)< \cdots <\widetilde{tb}(K_i)< \cdots$
\end{center}
with $\widetilde{tb}(K_{i+1})-\widetilde{tb}(K_i)<\infty$.\\

Now observe that during the separations of any $L_i^t$, knot components in $K_i$ split into ``simpler'' knot components (which form the link $K_{i+1}$) which are still disjointly embedded simple closed curves in the resulting Legendrian suface $L_{i+1}$. From our choices, knot components in any $K_i$ can not bound disks in $L_i$ and can not be decomposable. Therefore, there must exist some $i_{max} \in \mathbb{N}$ such that we can not proceed further. That is, we have
\begin{center}
		$r_0 < r_1< r_2< \cdots <r_i< \cdots < r_{i_{max}}$
\end{center} 
where the sequence stops at $r_{i_{max}} <\infty$, and
\begin{center}		
$\widetilde{tb}(K_0) < \widetilde{tb}(K_1)< \widetilde{tb}(K_2)< \cdots <\widetilde{tb}(K_i)< \cdots < \widetilde{tb}(K_{i_{max}} )$
\end{center}
where the sequence stops at $\widetilde{tb}(K_{i_{max}} )<\infty$.\\

Therefore, every chain in $(A,\preceq)$ has an upper bound in $A$, and hence, by Zorn's Lemma, the partially ordered set $(A,\preceq)$ has at least one maximal element, say $\Vert K_{max} \Vert \in A$. Then by the definition of the partial order relation ``$\preceq$'', for a chosen representative $K_{max} \in \Vert K_{max} \Vert$, the number $\widetilde{tb}(K_{max})<\infty$ (exists) and is the maximum value among all possible values obtained from such links of transverse intersections. Then for a corresponding Legendrian representative, say $L_{max} \in [L]_{min} \setminus [L]^o_{min}$, one obtains $\mathcal{P}_X(L_{max})$ is finite and maximal among all, i.e., the relative invariant $M\mathcal{P}_X(L)=\mathcal{P}_X(L_{max})$ is well-defined.

\end{proof}

\section{Absolute Page Crossing Number and Proof of Theorem \ref{thm:absolute_page_crossing}} \label{sec:well-definedness_absolute}

Next we introduce page-free version of maximal page crossing number.

\begin{definition}
	Let $L\hookrightarrow (M^5, \xi)$ be a closed orientable Legendrian surface. Fix an admissable open book $(B, f)$ for $L$ \textbf{\textit{essentially intersecting}} $L$ which means that the double of every page of $(B, f)$ essentially intersects $L$. Fix any page $X$ of $(B, f)$. Then
	\begin{center}
		$M\mathcal{P}_{(B, f)}(L):=M\mathcal{P}_{X}(L)$
\end{center}
is called the \textbf{\textit{absolute maximal page crossing number of $L$ with respect to $(B, f)$}}.
\end{definition}

We start with the following fact which will be useful in proving well-definedness of $M\mathcal{P}_{(B, f)}(L)$:

\begin{lemma} \label{lem:perturbation_of_page}
	The relative invariant $M\mathcal{P}_{X}(L)$ does not change under (small) perturbations of the double $D(X)$ transverse to a Legendrian representative in $[L]$ realizing $M\mathcal{P}_{X}(L)$.
\end{lemma}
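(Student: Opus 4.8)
The plan is to show that a sufficiently small perturbation of the double $D(X)$ either leaves the link of intersection with a maximizing representative $L_{max}$ unchanged up to isotopy equivalence, or modifies it only through the same moves (births/deaths of unknots, degenerations) that were already analyzed in the proof of Proposition \ref{prop:irregular_isotopy} — and that in either case the maximum value $\mathcal{P}_X$ over the Legendrian isotopy class cannot change. First I would fix a Legendrian representative $L_{max} \in [L]$ realizing $M\mathcal{P}_X(L)$, so that $L_{max} \pitchfork D(X)$ and $\mathcal{P}_X(L_{max}) = M\mathcal{P}_X(L)$. A small perturbation $D(X')$ of the double, transverse to $L_{max}$, can itself be realized by a compactly supported contact isotopy of the open book (this is exactly the mechanism of Lemma \ref{lem:spider}: the perturbation moves the binding $B$ inside a trivial normal neighborhood $B \times D^2$ using the flow of a compactly supported contact vector field, and the pages follow along). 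Since this isotopy carries $(B,f)$ to an isotopic open book $(B',f')$ with the same monodromy, the ambient diffeomorphism it induces takes $D(X)$ to $D(X')$ and, composed with its inverse, can equivalently be viewed as a Legendrian isotopy of $L_{max}$ relative to the fixed double $D(X)$. By genericity, for a generic such perturbation this is a \emph{regular} Legendrian isotopy with respect to $D(X)$, and Proposition \ref{prop:regular_isotopy} gives $\mathcal{P}_{X'}(L_{max}) = \mathcal{P}_X(L_{max})$.

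Next I would argue that $M\mathcal{P}_{X'}(L) = \mathcal{P}_{X'}(L_{max})$, i.e., that $L_{max}$ is still maximizing after the perturbation. Here the key observation is that the correspondence between Legendrian representatives transverse to $D(X)$ and those transverse to $D(X')$ is set up by the same compactly supported isotopy, and it sends regular isotopies to regular isotopies and the moves of Proposition \ref{prop:irregular_isotopy} (births/deaths of unknots, unifications, separations) to moves of the same type with the same effect on $\widetilde{tb}$. Consequently the partially ordered set $(A,\preceq)$ built in the proof of Theorem \ref{thm:well-definedness_relative} for $D(X)$ is order-isomorphic to the corresponding set for $D(X')$, so the two maxima agree: $M\mathcal{P}_{X'}(L) = M\mathcal{P}_X(L)$. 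Equivalently, if some $L' \in [L]$ transverse to $D(X')$ had $\mathcal{P}_{X'}(L') > \mathcal{P}_{X'}(L_{max})$, then pulling $L'$ back along the inverse perturbation would give a representative transverse to $D(X)$ with $\mathcal{P}_X$-value exceeding $M\mathcal{P}_X(L)$, a contradiction.

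Finally I would also need to address the essential-intersection hypothesis: since the perturbation is small and $D(X)$ essentially intersects $L$, so does $D(X')$ — being nonempty for every representative is an open condition that small perturbations of the double preserve (again because the perturbation is realized by an ambient isotopy that does not create new representatives disjoint from the double). This ensures $M\mathcal{P}_{X'}(L)$ is still well-defined, and combining the above we get $M\mathcal{P}_{X'}(L) = M\mathcal{P}_X(L)$.

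The main obstacle I anticipate is making precise that ``small perturbation of the double'' genuinely induces a regular Legendrian isotopy (equivalently, an ambient contact isotopy) and does not introduce births of intersection knots that would require invoking the irregular case — one must be careful that generically the perturbation changes the intersection link only by an ambient isotopy, so that transversality and the count $\widetilde{tb}$ are literally preserved; if a non-generic perturbation does cause births/deaths, one falls back on Proposition \ref{prop:irregular_isotopy} and the already-established invariance of the \emph{maximum} over $[L]$. Handling the interface between these two cases cleanly, while keeping track that the essential-intersection property survives, is where the real work lies.
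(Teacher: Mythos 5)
Your proposal is correct and follows essentially the same route as the paper: the paper's proof also reduces the statement to showing $\mathcal{P}_{X}(L')=\mathcal{P}_{X'}(L')$ for a maximizing representative $L'$, by observing that a perturbation of the double transverse to $L'$ carries each knot component of the intersection link to a Legendrian isotopic one in the new Stein diagrams, so each $\widetilde{tb}$-contribution is unchanged. Your additional pullback argument showing that $L_{max}$ remains a maximizer for $D(X')$ (and that essential intersection is preserved) makes explicit a step the paper treats as immediate, which is a welcome refinement rather than a divergence.
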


\begin{proof}
Suppose that $L'\in [L]$ realizes $M\mathcal{P}_{X}(L)$. In other words, $D(X)$ transversely intersects the Legendrian isotopic copy $L'$ of $L$ and we have $$M\mathcal{P}_{X}(L)=\mathcal{P}_{X}(L').$$
We want to show that $M\mathcal{P}_{X}(L)=M\mathcal{P}_{X'}(L)$ for any pair $X, X'$ of pages such that their doubles are isotopic to each other via a 1-parameter family of doubles of pages transverse to $L'$. Equivalently, need to show that $$\mathcal{P}_{X}(L')=\mathcal{P}_{X'}(L').$$ To this end, suppose  $X=X_{\theta}$, $X'=X_{\theta'}$ is such a pair of pages. Let $K=k_0 \cup k_1$ be any knot component of $L' \pitchfork D(X)$. Then as depicted in Figure \ref{fig:38} that a new knot component $K' \in L' \pitchfork D(X')$ is (Legendrian) isotopic to the older one $K$. So, the contribution of $K'$ to $\mathcal{P}_{X'}(L')$ is the same as the contribution of $K$ to the $\mathcal{P}_{X}(L')$. Thus, the claim follows.
\end{proof}
\begin{figure}[h!] 
	\includegraphics[scale=.78]{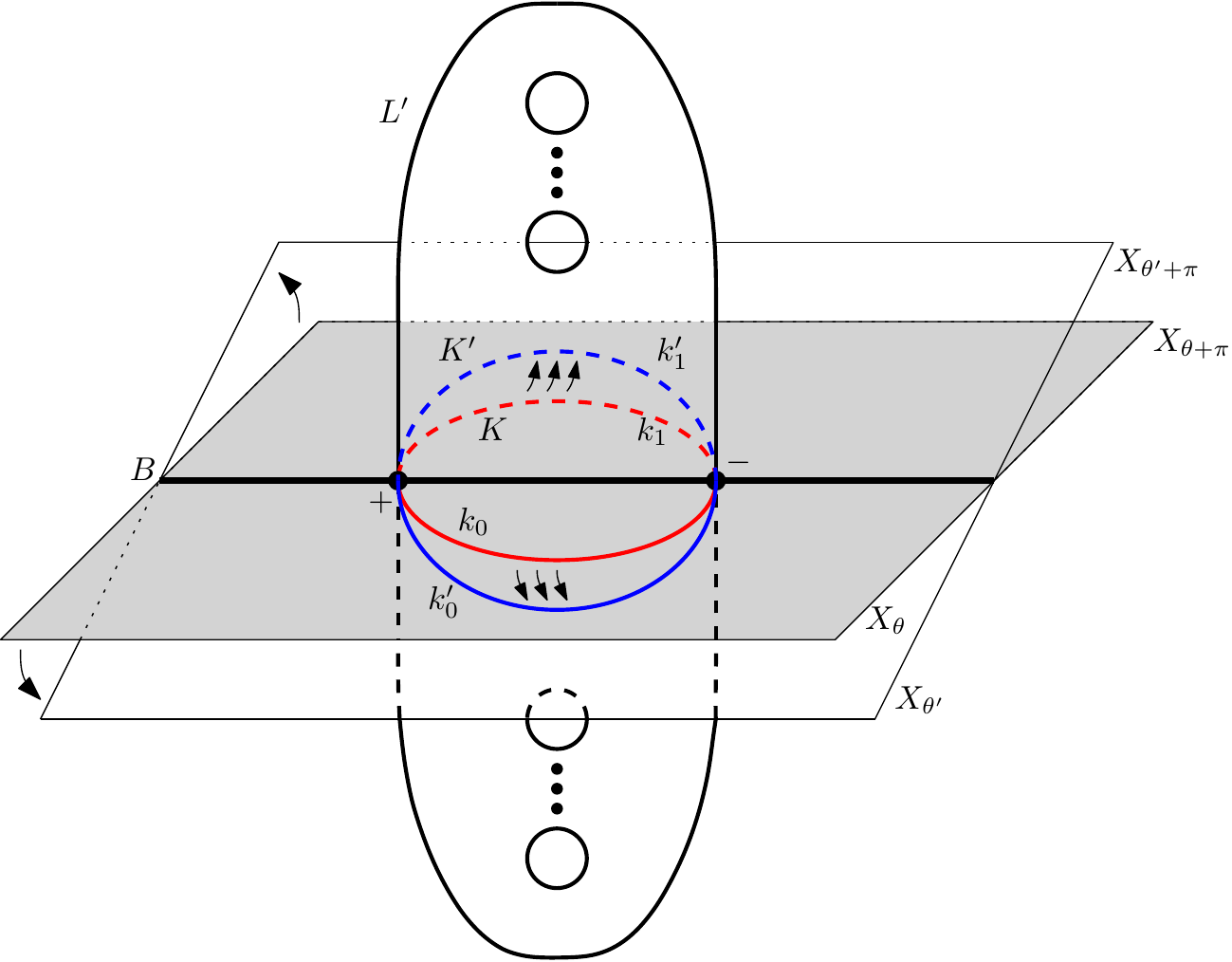}
	\caption{ Replacing $X_{0}=X_{\theta}$ (resp. $X_{1}=X_{\theta+\pi}$ ) with a nearby (Stein) page $X_{\theta'}$ (resp. $X_{\theta'+\pi}$ ) which are still intersecting $L'$ transversely. A new knot component $K'=k_{0}'\cup k_{1}'$ (isotopic to older one $K$) of the link of intersection of $L'$ with the new double $X_{\theta'}\cup_{\partial} X_{\theta'+\pi}$. }
	\label{fig:38}
\end{figure}	

Now with the help of the results from previous sections, one can easily prove the following:

\begin{proposition}
	 Let $L\hookrightarrow (M^5, \xi)$ be a closed orientable Legendrian surface. Fix an admissable open book $(B, f)$ for $L$. Then the number $M\mathcal{P}_{(B,f)}(L)$ is well-defined  and invariant under Legendrian isotopies of $L$.
\end{proposition}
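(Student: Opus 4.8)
The plan is to deduce the proposition from the two ingredients already established: the well-definedness of the relative invariant $M\mathcal{P}_{X}(L)$ (Theorem \ref{thm:well-definedness_relative}) together with its Legendrian-isotopy invariance (Lemma \ref{lem:invariance_under_Legendrian_isotopy}), and the page-independence furnished by Lemma \ref{lem:perturbation_of_page}. First I would observe that, by definition, $M\mathcal{P}_{(B,f)}(L):=M\mathcal{P}_{X}(L)$ for some fixed page $X$; since the open book is assumed to essentially intersect $L$ (the double of \emph{every} page essentially intersects $L$), the quantity $M\mathcal{P}_{X}(L)$ is well-defined for \emph{every} page $X$ by Theorem \ref{thm:well-definedness_relative}. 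So the only thing that needs checking for well-definedness of $M\mathcal{P}_{(B,f)}(L)$ is that the value $M\mathcal{P}_{X}(L)$ does not depend on which page $X$ of $(B,f)$ we pick.

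To prove this page-independence, I would fix two pages $X=X_{\theta}$ and $X'=X_{\theta'}$ of $(B,f)$. Choose a Legendrian representative $L'\in[L]$ that realizes $M\mathcal{P}_{X}(L)$ and is transverse to $D(X)$; after a further small (regular) Legendrian perturbation, which by Proposition \ref{prop:regular_isotopy} changes neither $\mathcal{P}_{X}(L')$ nor membership in $[L]$, I may assume $L'$ is also transverse to $D(X')$. The key point is then that the path of pages $X_{s}$, $s$ interpolating between $\theta$ and $\theta'$ along $S^{1}$, gives a $1$-parameter family of doubles $D(X_{s})$; after a generic perturbation of this family (keeping the endpoints), each $D(X_{s})$ is transverse to $L'$ except at finitely many parameters, and the analysis of Lemma \ref{lem:perturbation_of_page} (compare Figure \ref{fig:38}) shows $\mathcal{P}_{X_{s}}(L')$ is locally constant in $s$: each knot component $K\subset L'\pitchfork D(X_{s})$ deforms through a Legendrian isotopy of arcs in the Stein diagrams, hence its $\widetilde{tb}$ is preserved, and births/deaths/degenerations are excluded precisely because the family stays essentially intersecting and we may arrange it to avoid the non-generic loci. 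Therefore $\mathcal{P}_{X}(L')=\mathcal{P}_{X'}(L')$, and since $L'$ realizes the relative maximum at $X$, it also realizes it at $X'$ (a representative maximizing $\mathcal{P}_{X}$ maximizes $\mathcal{P}_{X'}$ among $[L]$ because the two page-crossing functions agree on every representative after such a perturbation), giving $M\mathcal{P}_{X}(L)=M\mathcal{P}_{X'}(L)$.

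Having established that $M\mathcal{P}_{(B,f)}(L)$ is well-defined (independent of the chosen page), the Legendrian-isotopy invariance is then immediate: if $L_{1}$ is Legendrian isotopic to $L$, then $[L_{1}]=[L]$, and a representative $L'\in[L]$ maximizing $\mathcal{P}_{X}$ over $[L]$ also maximizes it over $[L_{1}]$, so $M\mathcal{P}_{X}(L_{1})=\mathcal{P}_{X}(L')=M\mathcal{P}_{X}(L)$ by Lemma \ref{lem:invariance_under_Legendrian_isotopy}; since this holds for every page $X$ of $(B,f)$, we conclude $M\mathcal{P}_{(B,f)}(L_{1})=M\mathcal{P}_{(B,f)}(L)$.

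I expect the main obstacle to be the technical heart of the page-independence step: one must be careful that, as $s$ varies, the family $D(X_{s})$ can be perturbed so that it remains transverse to $L'$ for all but finitely many $s$ \emph{and} so that at those exceptional parameters the only events are the harmless ones controlled by Lemma \ref{lem:perturbation_of_page} — i.e. that no \emph{net} change in $\widetilde{tb}$ is introduced. Since $L'$ realizes the relative maximum, any potential jump at an exceptional parameter would have to be non-positive going forward and non-negative coming back, forcing it to be zero; this is the kind of maximality argument used in the proof of Theorem \ref{thm:well-definedness_relative}, and I would invoke it here to close the gap. The remainder is bookkeeping over the finitely many exceptional parameters along the arc in $S^{1}$.
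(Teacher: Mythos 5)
Your overall strategy is the paper's: reduce well-definedness of $M\mathcal{P}_{(B,f)}(L)$ to page-independence of $M\mathcal{P}_{X_\theta}(L)$, establish local constancy in $\theta$ via Lemma \ref{lem:perturbation_of_page}, globalize over $S^1$, and then deduce Legendrian-isotopy invariance directly from Lemma \ref{lem:invariance_under_Legendrian_isotopy}. The difference is in the globalization step. You drag a single maximizing representative $L'$ along the whole path of doubles $D(X_s)$ and must then confront finitely many exceptional parameters where transversality fails; to cross them you invoke a ``non-positive going forward, non-negative coming back'' semicontinuity argument, which is not justified as stated --- $M\mathcal{P}_{X_s}(L)$ is a maximum over the entire isotopy class, the maximizer is allowed to change across an exceptional parameter, and nothing forces the one-sided limits to bracket the value there. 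The paper avoids this issue altogether: for \emph{each} $\theta\in S^1$ it uses that $\theta$'s \emph{own} maximizer, which by openness of transversality remains transverse to $D(X_{\theta'})$ for all $\theta'$ in a small neighborhood $U_\theta$, so Lemma \ref{lem:perturbation_of_page} gives constancy of $M\mathcal{P}_{X_{\theta'}}(L)$ on $U_\theta$; compactness of $S^1$ then yields a finite subcover by overlapping arcs on each of which the value is constant, whence global constancy. No single representative ever has to survive the whole loop, so no exceptional parameters arise. I would recommend reorganizing your page-independence step this way; the rest of your argument (in particular the final deduction of isotopy invariance from the relative invariant, via Theorem \ref{thm:well-definedness_relative} and Lemma \ref{lem:invariance_under_Legendrian_isotopy}) matches the paper and is fine.
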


\begin{proof}
Recall the definition of $M\mathcal{P}_{(B,f)}(L)$:

\begin{center}
		$M\mathcal{P}_{(B, f)}(L):=M\mathcal{P}_{X}(L)$
\end{center}
where $X$ is any page of $(B, f)$, or equivalently, 
\begin{center}
		$M\mathcal{P}_{(B, f)}(L):=M\mathcal{P}_{X_\theta}(L)$ for a fixed $\theta\in S^1$.
\end{center}

By assumption, $D(X_\theta)$ essentially intersects $L$ for all $\theta \in S^1$. For each  $\theta \in S^1$, by Lemma \ref{lem:perturbation_of_page}, the number $M\mathcal{P}_{X_\theta}(L)$ takes the same value on some small enough neighborhood $U_\theta$ of $\theta$ in $S^1$, and so the collection $\{U_\theta \; |\; \theta \in S^1 \}$ is an open cover for $S^1$. By compactness of $S^1$, there exists a finite  subcover, i.e., there exist $\theta_1, \theta_2, ..., \theta_r \in S^1$ such that $$S^1=U_{\theta_1} \cup U_{\theta_2} \cup \cdots \cup U_{\theta_r}.$$ After renaming (if necessary), one may assume that for any two consecutive arcs, we have $U_{\theta_i} \cap U_{\theta_{i+1}}\neq \emptyset$. Since $M\mathcal{P}_{X_{\theta_i}}(L)$ and $M\mathcal{P}_{X_{\theta_{i+1}}}(L)$ take constant values on their domains, they must agree on $U_{\theta_i} \cap U_{\theta_{i+1}}$, and hence $M\mathcal{P}_{X_\theta}(L)$ takes a constant value on $U_{\theta_i} \cup U_{\theta_{i+1}}$. Repeating the argument (by changing $i$ one by one), we conclude that $M\mathcal{P}_{X_\theta}(L)$ takes the same value on the whole $S^1$. Hence, $M\mathcal{P}_{(B, f)}(L) =M\mathcal{P}_{X_\theta}(L)$ is independent of $\theta$, and, in particular, is well-defined.\\

Finally, the fact that $M\mathcal{P}_{(B, f)}(L)$ is invariant under Legendrian isotopies just follows from its definition combined with Theorem \ref{thm:relative_page_crossing} (or Lemma \ref{lem:invariance_under_Legendrian_isotopy}).

\end{proof}


\section{An Example} \label{sec:Example}

Let $\mathbb{C}^3$ be the complex space with the complex coordinates $$(z_{1}, z_{2}, z_{3})=(r_1, \theta_1,r_2,\theta_2,r_3,\theta_3),$$ where $z_{j}=r_{j}e^{i\theta_{j}} (j=1, 2, 3)$ are the polar coordinates, and $\mathbb{S}^5$ be the unit $5$-sphere in $\mathbb C^3$, i.e.,
$$\mathbb{S}^5=\left\{ (z_{1},z_{2},z_{3}) \mid |z_{1}|^{2}+|z_{2}|^{2}+|z_{3}|^{2}=1 \right\}.$$

The restriction of the $1$-form (a primitive of the standard symplectic form $\omega_{st}$ on $\mathbb{C}^3$)
\begin{center} 
	$\alpha_{st} =r_{1}^{2}d\theta_{1}+r_{2}^{2}d\theta_{2}+r_{3}^{2}d\theta_{3}$.
\end{center}
on $\mathbb{S}^5$ is a contact form and defines the standard contact structure $\xi_{5}$ on $\mathbb{S}^5$. So we have a closed contact manifold $(\mathbb{S}^5, \xi_{5}=\textrm{Ker}(\alpha_{5}))$ where $\alpha_{5}:=\alpha_{st}|_{\mathbb{S}^5}$.\\
	
We will consider an open book supporting $\xi_5$ which is admissable for a Legendrian surface we pick later inside $(\mathbb{S}^5, \xi_{5})$. Consider the standard $3$-sphere
$$\mathbb{S}^3=\left\lbrace (z_{1}, z_{2}, z_{3}) \in \mathbb{S}^{5} \mid z_{1}=0 \right\rbrace \subset \mathbb{S}^{5}$$ 
with its standard contact structure $\xi_{st}$ as a contact submanifold of $\mathbb{S}^5$ as follows:
\begin{center}
		$(\mathbb{S}^3, \xi_{st}=\xi_{3}= \textrm{Ker}(\alpha_{3}) ) \hookrightarrow (\mathbb{S}^5, \xi_{5})$
\end{center}
where $\alpha_{3}=\alpha_{5}|_{\mathbb{S}^3}=r_{2}^{2}d\theta_{2}+r_{3}^{2}d\theta_{3}|_{\mathbb{S}^3}$ is the contact form on $\mathbb{S}^3$ (defining $\xi_{st}=\xi_{3}$). \\
	
Consider the most standard open book on $\mathbb{S}^{5}$ which can be explicitly described as follows:
	\begin{center}
		$\pi: \mathbb{S}^{5}\setminus \mathbb{S}^{3}\longrightarrow S^{1}$
		\\
		$(r_{1}, \theta_{1}, r_{2}, \theta_{2}, r_{3}, \theta_{3})\longmapsto \theta_{1}$.
	\end{center}
Note that the standard $\mathbb{S}^3$ is the binding, and a typical page $X_{\theta_{1}}=\pi^{-1}(\theta_{1})$ is an open $4$-ball (simply-connected and Weinstein). The closure of $X_{\theta_{1}}$ (still denoted by $X_{\theta_{1}}$ for simplicity) can be parametrized by
\begin{center}
	$X_{\theta_{1}}=\pi^{-1}(\theta_{1}):\left\{\vec\Gamma: D^{4}\longrightarrow S^{5}, \quad (\rho_{1},\phi_{1},\rho_{2},\phi_{2})\longmapsto(\sqrt{1-\rho_{1}^{2}-\rho_{2}^{2}}, \theta_{1},\rho_{1},\phi_{1},\rho_{2},\phi_{2} )\right\}$.
\end{center}
(Clearly, $X_{\theta_{1}}$ is diffeomorphic to $D^{4}$, and note $0\leq \rho_{1}^{2}+\rho_{2}^{2}\leq 1$.) One can easily check that the embedded open book $(\mathbb{S}^{3}, \pi)$ on $\mathbb{S}^{5}$ supports $\xi_{5}$ and the corresponding abstract open book is $(D^{4}, id_{D^{4}})$ (with a trivial monodromy).\\

Let's now pick a Legendrian surface $L$ inside $(\mathbb{S}^{5}, \xi_{5})$. For a fixed constant $k$, consider the \textbf{\textit{Clifford torus}} (a well-known and well-studied surface) defined by
\begin{center}
		$L=T_{k}=\left\{ (z_{1},z_{2},z_{3}) \in \mathbb{C}^{3} \mid |z_{1}|^{2}=|z_{2}|^{2}=|z_{3}|^{2}=\frac{1}{3},\quad \theta_{1}+\theta_{2}+\theta_{3}=k \right\} \subset  \mathbb{S}^{5}$
\end{center}
(note in polar coordinates we have $r_{1}^{2}=r_{2}^{2}=r_{3}^{2}=\frac{1}{3}$.) Clearly $T_k$ is a surface inside $\mathbb{S}^{5}$.  One needs to check that $T_k$ is Legendrian $(\mathbb{S}^{5}, \xi_{5})$. To this end, consider the following parametrization for $T_k$ where $\varphi_{1},\varphi_{2}$ are angular coordinates on an abstract torus $T^2$:
	\begin{center}
		$T_k: \quad \vec{\sigma}(\varphi_{1},\varphi_{2})=\big(\frac{1}{\sqrt{3}}, \varphi_{1},\frac{1}{\sqrt{3}}, \varphi_{2},\frac{1}{\sqrt{3}},k-\varphi_{1}-\varphi_{2}\big) \in \mathbb{S}^{5}$,\\\vspace{0.2cm}
		$\vec{\sigma}_{\varphi_{1}}=\langle0,1,0,0,0,-1\rangle=\dfrac{\partial}{\partial \theta_{1}}-\dfrac{\partial}{\partial \theta_{3}}$,\\\vspace{0.2cm}
		$\vec{\sigma}_{\varphi_{2}}=\langle0,0,0,1,0,-1\rangle=\dfrac{\partial}{\partial \theta_{2}}-\dfrac{\partial}{\partial \theta_{3}}$.\\
		\end{center}

Then, we easily see that 
\begin{center}
	$\alpha_{5}\mid_{T_{k}}=\frac{1}{3}d\theta_{1}+\frac{1}{3}d\theta_{2}+\frac{1}{3}d\theta_{3}$, \\\vspace{0.2cm}
	$\alpha_{5}\mid_{T_{k}}(\vec{\sigma}_{\varphi_{1}})=0=\alpha_{5}\mid_{T_{k}}(\vec{\sigma}_{\varphi_{2}})$.\vspace{0.2cm}
\end{center}

Therefore, $T_{k}$ is a Legendrian torus in $(\mathbb{S}^{5}, \xi_{5})$. Let's understand how $T_{k}$ intersects with the binding $\mathbb{S}^{3}$ and a typical page $X_{\theta_{1}}\approx D^4$: \\

\noindent For $T_{k}\cap \mathbb{S}^{3}$, we have
\begin{center}
		$\mathbb{S}^{3}=\left\lbrace (z_{1}, z_{2}, z_{3}) \in \mathbb{C}^{3} \mid z_{1}=0 \right\rbrace=\left\lbrace r_{2}^{2}+r_{3}^{2}=1, r_{1}=0\right\rbrace$.
\end{center}
But on $T_{k}$, $r_{1}=\frac{1}{\sqrt{3}}\neq 0$. Hence, $T_{k}\cap \mathbb{S}^{3}=\emptyset$. In particular, this shows that the binding of the open book $(\mathbb{S}^{3}, \pi)$ intersects $T_k$ transversely.\\

\noindent For $K_{\theta_{1}}:=T_{k}\cap X_{\theta_{1}}$,
\begin{center}
		$X_{\theta_{1}}:\left\{\vec{\Gamma}(\rho_{1},\phi_{1},\rho_{2},\phi_{2})=(\sqrt{1-\rho_{1}^{2}-\rho_{2}^{2}}, \theta_{1},\rho_{1},\phi_{1},\rho_{2},\phi_{2} )\right\}$,\\\vspace{0.2cm}
		$T_{k}:\vec{\sigma}(\varphi_{1},\varphi_{2})=\big(\frac{1}{\sqrt{3}}, \varphi_{1},\frac{1}{\sqrt{3}}, \varphi_{2},\frac{1}{\sqrt{3}},k-\varphi_{1}-\varphi_{2}\big)$.
\end{center} 

Equating the corresponding coordinates, one gets the equations defining the intersection $K_{\theta_{1}}$:

\begin{center}
	$\sqrt{1-\rho_{1}^{2}-\rho_{2}^{2}}=\frac{1}{\sqrt{3}}, \;\;\theta_{1}=\varphi_{1}, \;\;\rho_{1}=\frac{1}{\sqrt{3}}, \;\;\phi_{1}=\varphi_{2}, \;\;\rho_{2}=\frac{1}{\sqrt{3}}, \;\;\phi_{2}=k-\varphi_{1}-\varphi_{2}$.
\end{center}

If we let $\phi_{1}=\varphi_{2}=\theta$, then we obtain the parametrization of $K_{\theta_{1}}$ given by

\begin{center}
	$K_{\theta_{1}}: \quad \vec{r}:S^1 \to \mathbb{S}^5, \quad \vec{r}(\theta)=\big(\frac{1}{\sqrt{3}},\theta_{1},\frac{1}{\sqrt{3}}, \theta, \frac{1}{\sqrt{3}},k-\theta_{1}-\theta\big)$.
\end{center}
Note that the parameter $\theta$ appears in two distinct angular coordinates with opposite signs, and so $K_{\theta_{1}}$ is an embedded unknot in $\mathbb{S}^5$ sitting as a $(1,-1)$-torus knot on the Clifford torus $T_k$. Note that following the same steps, one can also  consider $K_{\theta_{1}+\pi}:=T_{k}\cap X_{\theta_{1}+\pi}$ which is also a $(1,-1)$-torus knot (so unknot) on the Clifford torus $T_k$ (a paralel copy of $K_{\theta_{1}}$). Hence, we conclude that the double $D(X)=X_{\theta_{1}} \cup_{\partial} X_{\theta_{1}+\pi}$ intersects $T_k$ transversely and essentially along the (un)link $D(X) \cap T_k=K_{\theta_{1}} \sqcup K_{\theta_{1}+\pi}$ for all $\theta_1 \in S^1$.\\

One may think (see the claim below) $K_{\theta_{1}}$ $K_{\theta_{1}+\pi}$ as Legendrian unknots in $(\mathbb{S}^{3}, \xi_3)$ . For $K_{\theta_{1}}$:
\begin{center}
	$\vec{r'}(\theta)=\langle0,0,0,1,0,-1\rangle=\dfrac{\partial}{\partial \theta_{2}}-\dfrac{\partial}{\partial \theta_{3}}$,\\\vspace{0.2cm}
	$\alpha_{3}\mid _{K_{\theta_{1}}}=\frac{1}{3}d\theta_{2}+\frac{1}{3}d\theta_{3}$
\end{center}
and so, $\alpha_{3}\mid _{K_{\theta_{1}}}(\vec{r'}(\theta))=0$. Verification for $K_{\theta_{1}+\pi}$ is similar. Note that $K_{\theta_{1}}$ and $K_{\theta_{1}+\pi}$ can be also considered as Legendrian unknots in the Stein diagrams of $X_{\theta_{1}}$ and $X_{\theta_{1}+\pi}$, respectively.\\
		
Indeed, we have
	\begin{center}
		$\alpha_{5}\mid _{X_{\theta_{1}}}=\vec{\Gamma}^{*}(\alpha_{5})=\underbrace{(1-\rho_{1}^{2}-\rho_{2}^{2})\wedge d\theta_{1}}_{=0 \;(\theta_1 \textbf{ fixed})} +\rho_{1}^{2}d\phi_{1}+\rho_{2}^{2}d\phi_{2}=\rho_{1}^{2}d\phi_{1}+\rho_{2}^{2}d\phi_{2}$, and so\\\vspace{0.2cm}
		$d\alpha_{5}\mid_{X_{\theta_{1}}}=d(\alpha_{5}\mid _{X_{\theta_{1}}})=d(\rho_{1}^{2}d\phi_{1}+\rho_{2}^{2}d\phi_{2})=2\rho_{1}d\rho_{1}\wedge d\phi_{1}+2\rho_{2}d\rho_{2}\wedge d\phi_{2}$
	\end{center}
from which we compute
\begin{center}
	$d(\alpha_{5}\mid _{X_{\theta_{1}}})\mid _{K_{\theta_{1}}}=d(\alpha_{5}\mid _{T_{k}\cap X_{\theta_{1}}})=\frac{2}{\sqrt{3}}d\rho_{1}\wedge d\phi_{1}+\frac{2}{\sqrt{3}}d\rho_{2}\wedge d\phi_{2}$, and also \\\vspace{0.2cm}
	$(\alpha _{5}\mid _{X_{\theta_{1}}})|_{K_{\theta_{1}}}=\frac{1}{3}d \phi_{1}+\frac{1}{3}d \phi_{2}(=\frac{1}{3}d \theta_{2}+\frac{1}{3}d \theta_{3})$\\\vspace{0.2cm}
	$\Rightarrow (\alpha _{5}\mid _{X_{\theta_{1}}})|_{K_{\theta_{1}}}(\vec{r'}(\theta))=\frac{1}{3}-\frac{1}{3}=0$.
\end{center}	

These verify that $K_{\theta_{1}}, K_{\theta_{1}+\pi}$ are isotropic unknots in every (simply-connected) Weinstein (so Stein) page $(X_{\theta_{1}}, d\alpha_{5}\mid _{X_{\theta_{1}}})$, and every page of the open book $(\mathbb{S}^{3}, \pi)$ essentially intersects $T_k$. As a result, we conclude that $(\mathbb{S}^{3}, \pi)$ is an essentially intersecting admissable open book for the Clifford torus $T_k$. See Figure \ref{fig:Cliff_front} for a schematic picture for the front projection of $T_k$.

\begin{figure}[h]
	\centering
	\includegraphics[scale=.8]{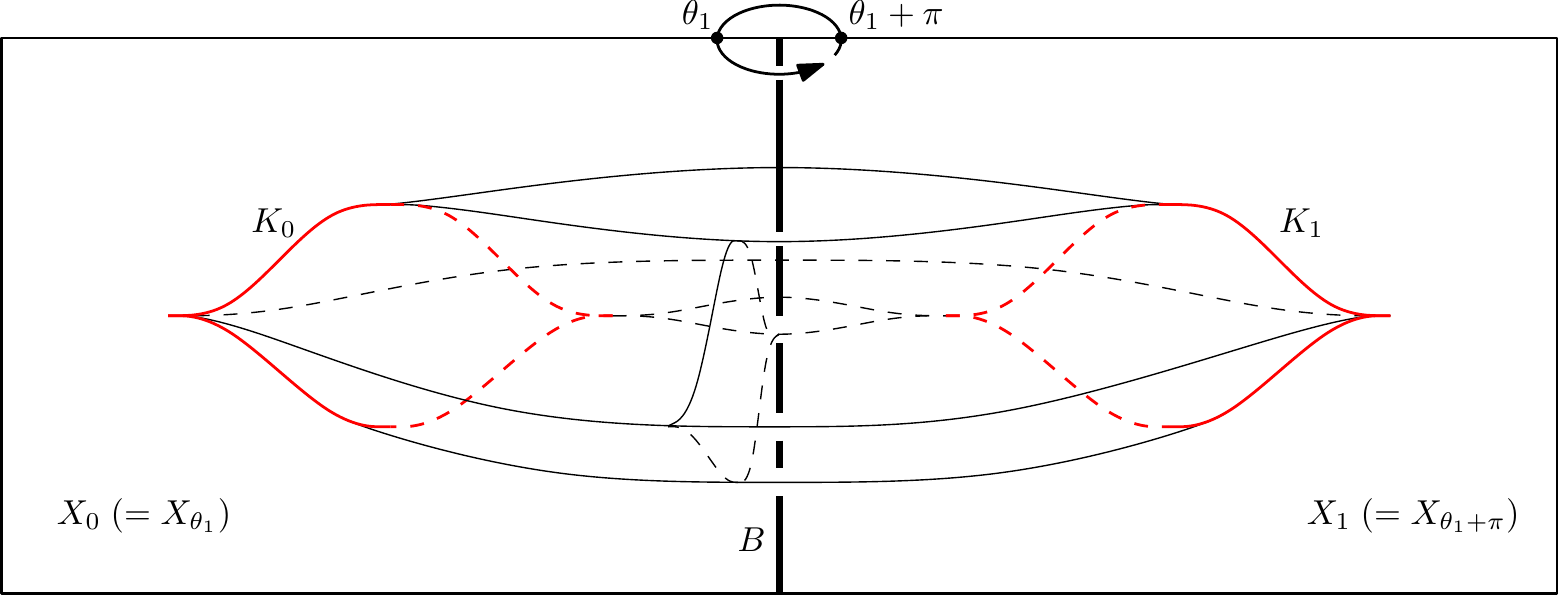}
	\caption{A schematic picture of the front projection of the Legendrian Clifford torus $T_k \subset (\mathbb{S}^{5}, \xi_{5})$ onto $\mathbb{R}^3$ with coordinates $z,y_1,y_2$. The components $K_{0}, K_{1}$  (in red) of the (un)link of intersection of $T_k$ with the double $X_0 \cup_{\partial} X_1\approx \mathbb{S}^4$ of a page $X\approx D^4$ of the trivial open book on $\mathbb{S}^5$. (Note: $B\approx \mathbb{S}^3$ and $T_k \pitchfork B=\emptyset$.)}
		\label{fig:Cliff_front}
\end{figure}

Next, we set  $K_0=K_{\theta_{1}}, K_1=K_{\theta_{1}+\pi}$ and also $X_{0}=X_{\theta_{1}}$ and $X_{1}=X_{\theta_{1}+\pi}$, so that the (un)link of intersection $T_k$ with the double $D(X)=X_{0}\cup_{\partial} X_{1}$ is given by
\begin{center}
	$T_k \pitchfork D(X)=K_0 \sqcup K_1$
\end{center}
With respect to the notation introduced in Section \ref{sec:Relative_Page_Crossing}, we have $K_{0}=k_{0}^{0} \subset X_0$ (no $k_{0}^{1}$) and $K_{1}=k_{1}^{1} \subset X_1$ (no $k_{1}^{0}$). Also recall $T_k\cap \mathbb{S}^{3}=\emptyset$. Hence, $T_k\pitchfork D(X)$ is an unlink with two components $k_{0}^{0}, k_{1}^{1}$ which can be realized in Stein diagrams of $(X_{0}, d\alpha_{5}|_{X_0})$ and $(X_{1}, d\alpha_{5}|_{X_{1}})$ as in Figure \ref{fig:Cliff_Stein_diag}. (Of course, one can make cancelations to obtain simpler diagrams... )

\begin{figure}[h!]
	\centering	\includegraphics[scale=1]{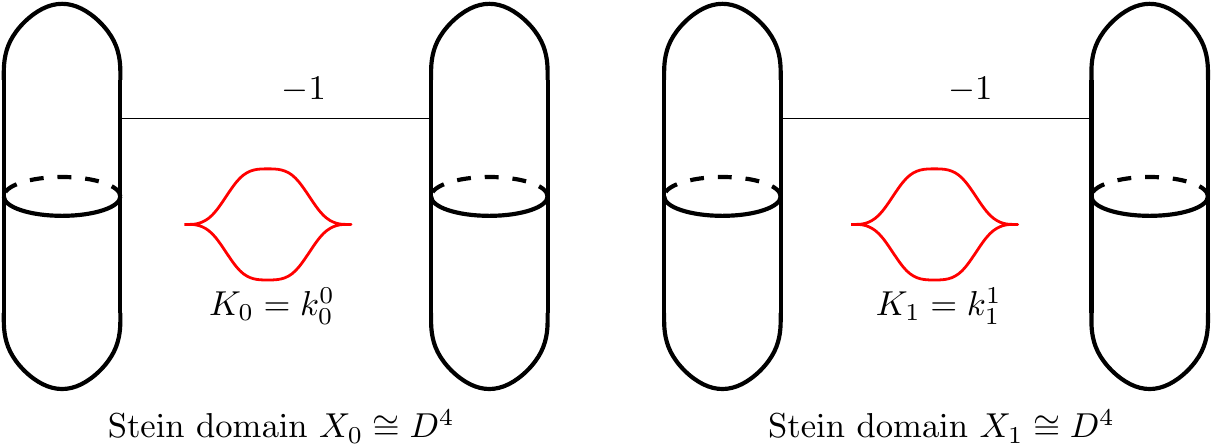}
	\vspace{0.5cm}
	\caption{Realizing the (un)link of transverse intersection of the Legendrian Clifford torus $T_k$ with the double $X_0 \cup_{\partial} X_1\approx \mathbb{S}^4$ of a page $X\approx D^4$ in the Stein diagrams of $(X_{0}, d\alpha_{5}|_{X_0})$ and $(X_{1}, d\alpha_{5}|_{X_{1}})$. }
	\label{fig:Cliff_Stein_diag}
\end{figure}

Here one needs to verify that Figure \ref{fig:Cliff_Stein_diag} reflects the correct pictures of $K_{0}=k_{0}^{0}$ and $K_{1}=k_{1}^{1} $:

\begin{claim}
$tb(K_0)=-1$ ( and so is $tb(k_{0}^{0})$ ) \, and \;\;$tb(K_1)=-1$ ( and so is $tb(k_{1}^{1})$ ). 
\end{claim}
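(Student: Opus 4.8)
The plan is to compute the Thurston--Bennequin numbers of $K_0$ and $K_1$ directly from their descriptions as Legendrian unknots sitting in the Weinstein page $D^4$ (equivalently in its Stein-fillable boundary $\partial D^4 \cong \mathbb{S}^3$), using the cusp/writhe formula $tb(K)=w(K)-\tfrac{1}{2}c(K)$ together with the geometric fact that each $K_{\theta_1}$ is a $(1,-1)$-torus knot on the Clifford torus $T_k$. First I would parametrize $K_{\theta_1}$ explicitly via $\vec r(\theta)=(\tfrac{1}{\sqrt 3},\theta_1,\tfrac{1}{\sqrt 3},\theta,\tfrac{1}{\sqrt 3},k-\theta_1-\theta)$, as already set up, and push forward the standard Legendrian front projection of the page $X_{\theta_1}\approx D^4$ (with its Stein/Weinstein structure $d\alpha_5|_{X_{\theta_1}}=2\rho_1 d\rho_1\wedge d\phi_1+2\rho_2 d\rho_2\wedge d\phi_2$) to obtain the front of $K_{\theta_1}$. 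Because $\phi_1=\theta$ and $\phi_2=-\theta$ (up to the fixed shift $k-\theta_1$), the curve winds once positively in one $S^1$-factor and once negatively in the other, so its front is the standard front of a Legendrian unknot: a single ``eye'' with exactly two cusps and no self-crossings, i.e. $c(K_0)=2$ and $w(K_0)=0$. Plugging in gives $tb(K_0)=0-\tfrac12\cdot 2=-1$, and the identical argument applies verbatim to $K_1=K_{\theta_1+\pi}$, giving $tb(K_1)=-1$.

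To make this rigorous rather than merely pictorial, I would argue in two complementary ways and pick whichever reads cleaner. The first: identify $K_{\theta_1}$ as a Legendrian unknot in the tight contact $3$-manifold $(\mathbb{S}^3,\xi_3)$ (this is already verified in the excerpt: $\alpha_3|_{K_{\theta_1}}(\vec r\,')=0$), note that it bounds an obvious disk in $\mathbb{S}^3$ (a meridian disk of the solid-torus complement of the Clifford torus, or a disk in one of the obvious genus-one Heegaard pieces), and then invoke the Bennequin inequality (Theorem~\ref{thm:tb_bound}) to get $tb(K_{\theta_1})+|rot(K_{\theta_1})|\le -\chi(D^2)=-1$, hence $tb(K_{\theta_1})\le -1$; combined with the front-projection computation $tb(K_{\theta_1})=-1$ from the displayed $2$-cusp, $0$-writhe picture, equality forces the claimed value. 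The second, more self-contained route: compute $tb$ as a linking number directly — push $K_{\theta_1}$ off itself along the Reeb field $R_{\alpha_5}=\sum \partial_{\theta_j}$ (or along the Reeb field of the page), obtain the parallel copy $K'_{\theta_1}$ on a nearby Clifford torus $T_{k'}$, and count the algebraic intersection of $K'_{\theta_1}$ with a Seifert disk for $K_{\theta_1}$; since $K_{\theta_1}$ is a $(1,-1)$-curve on $T_k$ the framings differ by the self-intersection of that curve class on the torus, which is $1\cdot(-1)-(-1)\cdot 1$ interpreted appropriately, yielding $lk(K_{\theta_1},K'_{\theta_1})=-1$.

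The step I expect to be the genuine obstacle is not the value $-1$ itself but justifying that the Stein-diagram picture in Figure~\ref{fig:Cliff_Stein_diag} is the \emph{correct} front of $K_{\theta_1}$ inside the page $X_{\theta_1}$ — i.e. that under the chosen Weinstein structure on $D^4$ the Legendrian $K_{\theta_1}$ really does project to a standard $2$-cusp unknot with zero writhe, with no hidden crossings coming from the way $K_{\theta_1}$ sits relative to the (trivial) handle decomposition of $D^4$. Concretely one must check that the Legendrian condition $(\alpha_5|_{X_{\theta_1}})|_{K_{\theta_1}}(\vec r\,')=0$ (already done: $\tfrac13-\tfrac13=0$) together with the non-degeneracy of $d\alpha_5|_{X_{\theta_1}}$ along $K_{\theta_1}$ pins down the front up to Legendrian Reidemeister moves that do not change $tb$, so that the cusp count and writhe are well-defined invariants of the isotopy class; this is where I would lean on Gompf's description of Legendrian fronts in Stein handlebodies (\cite{G8}) and Theorem~\ref{thm:Stein_handlebody_condition}. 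Once that identification is in place, $tb(k_0^0)=tb(K_0)=-1$ and $tb(k_1^1)=tb(K_1)=-1$ follow immediately, and in particular $\widetilde{tb}(K_0\sqcup K_1)=tb(k_0^0)+tb(k_1^1)=-2$, which is what the subsequent computation of $M\mathcal{P}_{(\mathbb{S}^3,\pi)}(T_k)$ will require.
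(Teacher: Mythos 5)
Your proposal lands on the right value and even names the right tools, but the step you yourself flag as ``the genuine obstacle'' is precisely the content of the claim, and you do not supply it. The knot $K_0$ sits in the \emph{interior} of the page $X_0$ at $r_1=\rho_1=\rho_2=1/\sqrt{3}$; as such it is an isotropic circle in a $4$-dimensional Weinstein domain, and neither ``its front projection'' nor ``$tb(K_0)$'' is defined until you transport it to the contact boundary of the page. The paper does exactly this: it writes down the Liouville vector field $\chi_0=\vec{\Gamma}_*(\tfrac{\rho_1}{2}\partial_{\rho_1}+\tfrac{\rho_2}{2}\partial_{\rho_2})$ of $d\alpha_{5}|_{X_0}$, integrates its flow explicitly, and checks that the time-$\ln(3/2)$ flow carries $K_0$ onto the Legendrian circle $\theta\mapsto(0,\theta_1,\tfrac{1}{\sqrt{2}},\theta,\tfrac{1}{\sqrt{2}},k-\theta_1-\theta)$ in $(\mathbb{S}^3,\xi_3)=\partial X_0$; only then is $tb$ computed, by writing the Reeb field $R_{\alpha_3}$ along this circle in Cartesian coordinates and observing that it makes exactly one full left twist relative to the knot, so the Reeb push-off satisfies $lk(K_0,K_0')=-1$. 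Your main route simply asserts that the front is a two-cusp, zero-writhe ``eye''; without the Liouville-flow identification there is no front to speak of, and with it the assertion still requires the twist count that the paper actually performs.

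Your two fallback arguments do not close the gap either. The Bennequin inequality gives only the upper bound $tb(K_0)\le -1$; it cannot ``force equality'' unless you already possess the computation $tb(K_0)=-1$, which is the thing being proved. And your linking-number route is set up in the wrong ambient space: you push $K_{\theta_1}$ off along the Reeb field of $\alpha_5$ onto a nearby Clifford torus $T_{k'}$ inside $\mathbb{S}^5$, where linking numbers of circles are not defined; the computation must happen in the $3$-manifold $\partial X_0\cong\mathbb{S}^3$ (equivalently, in a level set of the Liouville flow), where the image of $K_0$ is a $(1,-1)$-curve on the torus $\{r_2=r_3=1/\sqrt{2}\}$ and the Reeb push-off is the torus-framed push-off, giving $lk=pq=-1$. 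Note also that your expression ``$1\cdot(-1)-(-1)\cdot 1$'' evaluates to $0$, not $-1$: the algebraic self-intersection of any curve class on a torus vanishes, so that is not the quantity producing the answer.
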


\begin{proof}
It suffices to show that $tb(K_0)=-1$ (since the proof of $tb(K_1)=-1$ follows exactly the same steps with different labels). As observed above,  $K_0$ is an isotropic unknot in the Stein page $(X_0, d\alpha_{5}\mid _{X_{0}})$. Consider the Liouville vector field $\chi_{st}=\frac{1}{2}(r_1\partial_{r_1}+r_2\partial_{r_2}+r_3\partial_{r_3})$ of $\omega_{st}=d\alpha_{st}$ which restricts on $X_0$ to $$\chi_0=(\dfrac{r_1^2-1}{2r_1})\partial_{r_1}+\frac{r_2}{2}\partial_{r_2}+\frac{r_3}{2}\partial_{r_3}=\vec{\Gamma}_*(\frac{\rho_1}{2}\partial_{\rho_1}+\frac{\rho_2}{2}\partial_{\rho_2})$$ which is a Liouville vector field for $d\alpha_{5}\mid _{X_{0}}$. (Here $\partial_{r_i}=\partial / \partial_{r_i}, \partial_{\rho_i}=\partial / \partial_{\rho_i}$, ... etc.) We compute the flow map $H_t:X_0 \to X_0$ of $\chi_0$ as:
$$H_t(r_1,\theta_1,r_2.\theta_2,r_3,\theta_3)=\left(\sqrt{1-(1-r_1^2)e^t}, \theta_1, r_2e^{t/2}, \theta_2, r_3e^{t/2}, \theta_3\right). $$ Observe that $H_{\ln(3/2)}$ maps any point $(\frac{1}{\sqrt{3}},\theta_1,\frac{1}{\sqrt{3}},\theta_2,\frac{1}{\sqrt{3}},\theta_3)$ in the interior of $X_0$ to a point $(0,\theta_1,\frac{1}{\sqrt{2}},\theta_2,\frac{1}{\sqrt{2}},\theta_3)$ in the contact boundary $(\mathbb{S}^{3}, \xi_3)$ of $(X_0, d\alpha_{5}\mid _{X_{0}})$. In particular, the image $H_{\ln(3/2)}(K_0)$ is a Legendrian unknot in  $(\mathbb{S}^{3}, \xi_3)$ which we will still denote by $K_0$. As a result, we may think of $K_0$ (drawn in the Stein diagram of $X_0$ ) as a Legendrian unknot $K_0 \subset (\mathbb{S}^{3}, \xi_3)$ with the Legendrian embedding

$$K_{0}: \quad \vec{\beta}:S^1 \to \mathbb{S}^3, \quad \vec{\beta}(\theta)=\left(0,\theta_{1},\frac{1}{\sqrt{2}}, \theta, \frac{1}{\sqrt{2}},k-\theta_{1}-\theta\right) \quad(\theta_1 \textbf{ fixed}).$$

We note that such an understanding of a knot is a common method in handle decomposition theory of 4-manifolds, and for Legendrian knots in Stein surfaces it is carefully studied in \cite{G8}. 

In the rest of the proof, for the computational purposes, we will keep using polar coordinates $(r_1,\theta_1)$ but switch to cartesian coordinates $(x_2,y_2,x_3,y_3)$ in the last four coordinates. In the new coordinates, by restricting the Liouville form 
$\alpha_{st} =r_{1}^{2}d\theta_{1}+x_2dy_2-y_2dx_2+x_3dy_3-y_3dx_3$ on $\mathbb{C}^3$ to the binding $\mathbb{S}^{3}$ ($r_1=0$), we obtain the contact form $\alpha_3$ defining $\xi_3$ and its Reeb vector field $R_{\alpha_3}$ given as

$$\alpha_3 =x_2dy_2-y_2dx_2+x_3dy_3-y_3dx_3, \quad R_{\alpha_3}=\langle 0,0 -y_2,x_2,-y_3,x_3 \rangle.$$

In the new coordinates the Legendrian unknot $K_0 \subset (\mathbb{S}^{3}, \xi_3)$ has the parametrization

$$K_{0}: \quad \vec{\beta}:[0,2\pi] \to \mathbb{S}^3, \quad(\theta_1 \textbf{ fixed})$$

$$\vec{\beta}(\theta)=
\left(0,\theta_{1},\frac{1}{\sqrt{2}}\cos(\theta), \frac{1}{\sqrt{2}}\sin(\theta), \frac{1}{\sqrt{2}} \cos(k-\theta_{1}-\theta),\frac{1}{\sqrt{2}}\sin(k-\theta_{1}-\theta)\right),$$

$$\vec{\beta}'(\theta)=
\frac{1}{\sqrt{2}} \big\langle 0,0,-\sin(\theta), \cos(\theta), \sin(k-\theta_{1}-\theta),-\cos(k-\theta_{1}-\theta) \big\rangle).$$

Also the restriction of the Reeb vector field on the $K_0$, in the coordinates $(r_1,\theta_1,x_2,y_2,x_3,y_3)$, is given by 

$$R_{\alpha_3} \mid_{K_0}=
\frac{1}{\sqrt{2}} \big\langle 0,0,-\sin(\theta), \cos(\theta), -\sin(k-\theta_{1}-\theta),\cos(k-\theta_{1}-\theta)\big\rangle.$$

Now observe that along $K_0$, the Reeb vector field $R_{\alpha_3} \mid_{K_0}$ makes exactly one full left twists. (It is enough to keep track of the last two components.) That is, if $K_0'$ denotes the parallel (contact) push-off of $K_0$ along $R_{\alpha_3} \mid_{K_0}$, then we have $lk(K_0,K_0')=-1$. Equivalently, $$tb(K_0)=-1$$ as claimed.

\end{proof}

Finally, after the above verification, we can calculate our invariants by using the Stein diagrams as follows: Using the notations introduced, we have
\begin{center}
	$\widetilde{tb}(K_{0})=tb(k_{0}^{0})=-1$  \quad  and  \quad   $\widetilde{tb}(K_{1})=tb(k_{1}^{1})=-1$.
\end{center}
So, the page crossing number $\mathcal{P}_{X}(L)$ for any page $X$ of the open book $(\mathbb{S}^{3}, \pi)$ is computed as
\begin{center}
	$\mathcal{P}_{X}(L)=\widetilde{tb}(K_{0})+\widetilde{tb}(K_{1})=-2$.
\end{center}
This is because with respect to any page, minimal link of intersection set has always two components as depicted in Figure \ref{fig:Cliff_front} and Figure \ref{fig:Cliff_Stein_diag}.) Also note that among all Legendrian representatives in $[T_k]$, the above embedding $\vec{\sigma}$ of $L=T_k$ gives the maximum possible value for $\mathcal{P}_{X}$ due to the Bennequin inequality (every Legendrian unknot bounds a disk in a tight three-sphere). 
As a result, the absolute and relative maximal page crossing numbers are computed as
\begin{center}
	$M\mathcal{P}_{(S^{3}, \pi)}(T_k)=M\mathcal{P}_{X}(T_k)=-2$.
\end{center} 

\begin{remark}
Thurston-Bennequin number of any Legendrian torus $L$ (regardless of how it is embedded in $(\mathbb{S}^{5}, \xi_{5})$) is computed as $tb(L)=0$ since it coincides with a topological invariant (see \cite{EES5}). So it is not possible to distinguish such Legendrian tori using Thurston-Bennequin invariant. On the other hand, since the new invariants defined here keep track Legendrian embeddings, they distinguish not only smooth embedding types of Legendrian surfaces but also their Legendrian isotopy types.
\end{remark}


\addcontentsline{toc}{chapter}{\textsc{References}}

\addcontentsline{TOC}{chapter}{References}

\end{document}